\documentclass[a4paper]{amsart}

\usepackage{color}
\usepackage{amsxtra}
\usepackage{mathtools}
\usepackage{enumerate}
\usepackage[english]{babel} 
\usepackage{blindtext}
\usepackage[inline]{enumitem}
\usepackage{nicefrac}
\mathtoolsset{showonlyrefs} 
\usepackage{ mathrsfs }
\usepackage{ upgreek }
\usepackage{tikz}

\usetikzlibrary{shadings}
\usepackage{color}

\usepackage{hyperref}
\hypersetup{
  colorlinks   = true, 
  urlcolor     = blue, 
  linkcolor    = blue, 
  citecolor   = red 
}
\xdefinecolor{darkgreen}{RGB}{175, 193, 36}

\newtheorem{theorem}{Theorem}[section]
\newtheorem{lemma}{Lemma}[section]

\newtheorem{definition}{Definition}[section]

\newtheorem{remark}{Remark}[section]
\DeclareRobustCommand{\rchi}{{\mathpalette\irchi\relax}}
\newcommand{\irchi}[2]{\raisebox{\depth}{$#1\chi$}} 
\title[Fundamental solution]{The fundamental solution of the fractional $p-$laplacian}

\author[L. M. Del Pezzo]{Leandro M. Del Pezzo}
	\address{Leandro M. Del Pezzo \hfill\break\indent
		IESTA -- Dpto. de Métodos Cuantitativos\hfill\break\indent
		Facultad de Ciencias Económicas y de Administración  \hfill\break\indent
		Universidad de la República
		\hfill\break\indent Av. Gonzalo Ramírez 1926, 11200 Montevideo, 
		\hfill\break\indent Departamento de Montevideo, URUGUAY. }
	\email{leandro.delpezzo@fcea.edu.uy}
	\urladdr{http://cms.dm.uba.ar/Members/ldpezzo/}
\author[A. Quaas]{Alexander Quaas}
\address{A. Quaas
        \hfill\break\indent Departamento de Matem\'atica, 
        \hfill\break\indent Universidad T\'ecnica Federico Santa Mar\'ia 
        \hfill\break\indent Casilla V-110, Avda. 
        \hfill\break\indent Espa\~na, 1680 -- Valpara\'iso, CHILE.}
    \email{alexander.quaas@usm.cl}
\begin{document}


\begin{abstract}
	In this article, we find the fundamental solution of the fractional
	$p-$laplacian and use them to prove two different Liouville--type theorems.
	 A non-existence classical Liouville-type theorem for  $p$-superharmonic 
	and a Louville type results for an Emden-Folder type equation with the fractional $p-$laplacian.
\end{abstract}
\maketitle

\keywordsname{ Fundamental solution; Liouville-type theorems; fractional
			$p-$laplacian; non-local operator.}

{ Mathematics Subject Classification (2010).}{Primary 35A08: 35R11: 35B53; Secondary 35J70: 35J75}


\section{Introduction}

	In the last decades, Liouville-type non-existence theorems have been studied intensively, 
	because they have emerged as a crucial tool for many applications in PDEs. They mostly appear in 
	establishing qualitative properties of solutions. The best known is the Gidas-Spruck a priori bound and nowadays 
	Liouville-type theorems are used in regularity issues.
	Observe that the non-existence results are used, in most of the cases, after rescaling and 
	a compactness argument. See, for instance, 
	\cite{MR4294719, MR3967048,MR2739791,MR1990294} and references therein.
	
	\medskip

	Our purpose here is to establish Liouville-type theorems for equations that involve the fractional 
	$p$-laplacian, defined by, 
	\[
		(-\Delta_p)^su(x)\coloneqq
		2C(s,p,N)
		\lim_{\varepsilon\to0^+}
		\int_{\mathbb{R}^N\setminus B_\varepsilon(x)}\!\!\!\!\!\!\!\!\!\!\!\!\!\!
		\!\!\!\!\dfrac{\Psi_p(u(x)-u(y))}{|x-y|^{N+sp}}dy\quad
		x\in\mathbb{R}^N
	\]
	where $p\in(1,\infty),$ $s\in(0,1),$ and 
	$C(s,p,N)$ is a normalization factor.  
	This constant ensures that in the limits
	\(p\to 2\) and \(s\to 1,\)  the operator coincides with the standard fractional laplacian
	and the classical \(p-\)laplacian, respectively. 
	In this work, for simplicity, we omit this constant, as it does not affect the qualitative properties of the solutions we study.
	
	\medskip
	
	The non-local operators have become relevant because they arise in several applications in many 
	fields, for instance,  game theory, mathematical physics,
	finance, image processing, L\'evy processes in probability, and 
	some optimization problems, 
	see  \cite{Caffarelli2012,MR2042661,MR2480109,MR1406564,MR0521262}
	and the references therein. From a mathematical point of view,
	the fractional $p-$Laplacian has a great 
	interest since it exhibits two key features: the
	nonlinearity of the operator and its non-local character. 
	For instance, the literature includes works on global bifurcation \cite{MR3556755}, eigenvalues \cite{MR3552458,MR3411543,LL}, regularity 
	\cite{MR3542614,MR3593528,MR4109087}, evolution problems \cite{MR3491533,MR3456825}, and existence results via Moser iteration \cite{MR3483598}, each contributing key insights into different aspects of equations involving the fractional \(p-\)laplacian.

\subsection{Main results}
	Our first result introduces the general formula for a fundamental solution of the fractional 
	$p-$laplacian. 
	\begin{theorem}\label{Theorem:Fundamental}
		Let $N\ge 2,$ $0<s<1,$ and  $1<p<\infty.$  
		\begin{enumerate}
			\item[(a)] If $ps\neq N$ then 
				\[
					v_\beta(x)=|x|^{\beta}\quad \beta\in\left(-\tfrac{N}{p-1},\tfrac{ps}{p-1}\right),
				\] 
				is a weak solution of 
				\begin{equation}
					\label{eq:psneqNteo}
						(-\Delta_p)^s v_\beta(x)= 
							\mathcal{C}(\beta)|x|^{\beta(p-1)-sp}\quad\text{in }\mathbb{R}^N\setminus\{0\},
				\end{equation}
				where
				\begin{equation}\label{eq:cbeta}
					\mathcal{C}(\beta)\coloneqq
					4\pi\alpha_N\int\displaylimits_{0}^{1}
					\left|1-\rho^{\beta} \right|^{p-2}
					\left(1-\rho^{\beta}\right)\left[\rho^{N-1}
					-\rho^{ps-\beta(p-1)-1}\right]G(\rho^{2})d\rho,
				\end{equation}
				with
				\begin{align*}
					&	\alpha_N\coloneqq\dfrac{\pi^{\frac{N-3}2}}{\Gamma\left(\frac{N-1}{2}\right)} 
					\text{ and }\\
					&G(t)\coloneqq G(t,N,ps)\coloneqq 
					B\left(\frac{N-1}2,\frac{1}2\right)F\left(\frac{N+ps}2,\frac{ps+2}2;\frac{N}2;t\right).
				\end{align*}
				Here $\Gamma,B$ and $F$ denote the gamma, 
				the beta, and the  (2-1)-hypergeometric functions, 
				respectively.
				
				Additionally, we have
				\begin{equation}\label{signodecbeta}
					\mathcal{C}(\beta)
					\begin{cases}
						=0 &\text{if } \beta=0,\text{ or }\beta=\tfrac{ps-N}{p-1},\\
						>0 &\text{if } \min\{\tfrac{ps-N}{p-1},0\}<\beta<\max\{\tfrac{ps-N}{p-1},0\},\\
						<0 & \text{otherwise}.
					\end{cases}
				\end{equation}
			\item[(b)] If $ps= N$ then 
				\[
					v(x)=\log(|x|),
				\] 
				is a weak solution of 
				\begin{equation}\label{eq:ps=Nteo}
					(-\Delta_p)^s v(x)= 0\quad\text{in }\mathbb{R}^N\setminus\{0\}.
				\end{equation}
		\end{enumerate}

	\end{theorem}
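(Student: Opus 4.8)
The plan is to reduce both statements to one scalar computation by exploiting the exact covariance of $(-\Delta_p)^s$ under dilations and rotations, and then to evaluate that scalar via a classical spherical integral. First I would observe that $v_\beta$ (resp.\ $\log|\cdot|$) is $C^\infty$ on $\mathbb{R}^N\setminus\{0\}$ and that its nonlocal tail $\int_{\mathbb{R}^N}\frac{|v_\beta(y)|^{p-1}}{(1+|y|)^{N+sp}}\,dy$ is finite \emph{exactly} when $\beta\in(-\tfrac{N}{p-1},\tfrac{ps}{p-1})$ — integrability near the origin forces $\beta(p-1)>-N$ and integrability near infinity forces $\beta(p-1)<ps$ — while for $\log|\cdot|$ the tail is finite for all these parameters because $ps=N>0$. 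Consequently $(-\Delta_p)^sv_\beta(x)$ is a well-defined finite number at every $x\neq0$, and since these functions are moreover $C^{1,1}_{\mathrm{loc}}$ away from the origin, the usual equivalence between the pointwise and the weak (distributional) fractional $p$-Laplacian applies; hence it suffices to establish the pointwise identities in \eqref{eq:psneqNteo} and \eqref{eq:ps=Nteo}.

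Next I would record the two covariances, each proved by a change of variables in the defining integral: $(-\Delta_p)^s\!\big(u(\lambda\,\cdot)\big)(x)=\lambda^{sp}\,[(-\Delta_p)^su](\lambda x)$ for $\lambda>0$, and $(-\Delta_p)^s\!\big(u(R\,\cdot)\big)(x)=[(-\Delta_p)^su](Rx)$ for $R\in O(N)$; also $(-\Delta_p)^s$ is insensitive to adding a constant to $u$. Because $v_\beta$ is radial and homogeneous of degree $\beta$, the first identity (together with $(-\Delta_p)^s(cu)=c^{p-1}(-\Delta_p)^su$ for $c>0$) forces $(-\Delta_p)^sv_\beta$ to be positively homogeneous of degree $\beta(p-1)-sp$, and the second forces it to be radial, so $(-\Delta_p)^sv_\beta(x)=\mathcal{C}\,|x|^{\beta(p-1)-sp}$ with $\mathcal{C}=(-\Delta_p)^sv_\beta(e)$ for a fixed $e\in\mathbb{S}^{N-1}$. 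Likewise, using $\log|\lambda x|=\log|x|+\log\lambda$, one gets $(-\Delta_p)^s(\log|\cdot|)(x)=\mathcal{C}\,|x|^{-sp}=\mathcal{C}\,|x|^{-N}$ in case (b). Everything is now reduced to the value of $\mathcal{C}$.

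To compute $\mathcal{C}$ I would pass to polar coordinates $y=\rho\omega$ in $\mathcal{C}=2\,\mathrm{p.v.}\!\int_{\mathbb{R}^N}\frac{\Psi_p(1-|y|^\beta)}{|e-y|^{N+sp}}\,dy$; by rotational symmetry the spherical part $\int_{\mathbb{S}^{N-1}}|e-\rho\omega|^{-N-sp}\,d\omega=|\mathbb{S}^{N-2}|\int_0^\pi(\sin\theta)^{N-2}(1-2\rho\cos\theta+\rho^2)^{-(N+sp)/2}\,d\theta$ equals $|\mathbb{S}^{N-2}|\,G(\rho^2)$ by the Gegenbauer generating function and a classical Beta-type integral (one checks the parameters are $\tfrac{N+ps}{2},\tfrac{ps+2}{2};\tfrac N2$, and that $2|\mathbb{S}^{N-2}|=4\pi\alpha_N$). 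Thus $\mathcal{C}=2|\mathbb{S}^{N-2}|\,\mathrm{p.v.}\!\int_0^\infty\Psi_p(1-\rho^\beta)\rho^{N-1}G(\rho^2)\,d\rho$; folding $(1,\infty)$ onto $(0,1)$ by $\rho\mapsto1/\rho$ — equivalently, applying the Kelvin inversion $y\mapsto y^\ast:=y/|y|^2$, which fixes $e$ and $\mathbb{S}^{N-1}$, has Jacobian $|y|^{-2N}$ and satisfies $|e-y^\ast|=|e-y|/|y|$ — and using $\Psi_p(1-\rho^{-\beta})=-\rho^{-\beta(p-1)}\Psi_p(1-\rho^\beta)$, the tail half contributes $-\int_0^1\Psi_p(1-\rho^\beta)\rho^{ps-\beta(p-1)-1}G(\rho^2)\,d\rho$, and adding the two halves reproduces precisely the bracket $\rho^{N-1}-\rho^{ps-\beta(p-1)-1}$ of \eqref{eq:cbeta}, i.e.\ $\mathcal{C}=\mathcal{C}(\beta)$. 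For (b) the same inversion, now with $\log|y^\ast|=-\log|y|$ and with the Jacobian weight $|y|^{ps-N}\equiv1$ since $ps=N$, shows that $(-\Delta_p)^s(\log|\cdot|)(e)$ equals its own negative, so $\mathcal{C}=0$, which is \eqref{eq:ps=Nteo}. Finally, \eqref{signodecbeta} is read off from \eqref{eq:cbeta}: for $\rho\in(0,1)$ the hypergeometric series has positive terms, so $G(\rho^2)>0$; moreover $\operatorname{sign}(1-\rho^\beta)=\operatorname{sign}\beta$ and $\operatorname{sign}\!\big(\rho^{N-1}-\rho^{ps-\beta(p-1)-1}\big)=\operatorname{sign}\!\big(\tfrac{ps-N}{p-1}-\beta\big)$ on $(0,1)$, and the product of these two signs yields exactly the three listed cases.

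The delicate step is the principal-value manipulation: the operator is defined by excising balls $B_\varepsilon(e)$, not by symmetrically deleting the sphere $\{|y|=1\}$, so splitting the radial integral at $\rho=1$ and folding is not literally legitimate term by term. I would fix this by using instead the inversion-symmetric excision $A_\varepsilon:=\{y:\min(|y-e|,|y^\ast-e|)<\varepsilon\}$. Since the differential of $y\mapsto y^\ast$ at $e$ is the linear isometry $I-2e\otimes e$, the set $A_\varepsilon$ differs from $B_\varepsilon(e)$ only by a shell of thickness $O(\varepsilon^2)$ at distance $\varepsilon$ from $e$; on that shell $|\Psi_p(1-|y|^\beta)|\le C|y-e|^{p-1}$, so the discrepancy between the two regularizations is $O(\varepsilon^{\,p-sp})\to0$ because $s<1$, while on $\mathbb{R}^N\setminus A_\varepsilon$ the splitting and the inversion are exact. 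The remaining convergence checks for the final integral in \eqref{eq:cbeta} — near $\rho=0$, near $\rho=\infty$, and near $\rho=1$, where $G(\rho^2)\sim c\,|1-\rho|^{-1-sp}$ but the bracket vanishes linearly and $\Psi_p(1-\rho^\beta)=O(|1-\rho|^{p-1})$ — are routine and use exactly the standing hypotheses $s<1$ and $\beta\in(-\tfrac N{p-1},\tfrac{ps}{p-1})$.
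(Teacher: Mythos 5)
Your proof is correct in its essentials but takes a genuinely different route from the paper's. The paper never discusses the pointwise principal value: it introduces the annular truncation
\[
J_\varepsilon v_{\beta}(x)= 2\int_{\mathbb{R}^N\setminus A_\varepsilon(x)}
\frac{\Psi_p(v_{\beta}(x)-v_{\beta}(y))}{|x-y|^{N+sp}}\,dy,
\qquad A_\varepsilon(x)=\{y:\,\bigl||x|-|y|\bigr|<\varepsilon\},
\]
whose excision set is symmetric in $x\leftrightarrow y$, decomposes $J_\varepsilon v_\beta=h_{\beta,\varepsilon}|x|^{\beta(p-1)-sp}-g_{\beta,\varepsilon}$ by exactly the same radial reduction and $\rho\mapsto 1/\rho$ folding that you perform, proves $L^1_{\mathrm{loc}}$ convergence of both pieces (splitting into three cases according to $p\gtrless 1/(1-s)$ and $p>N+1$ etc.\ to handle the crescent term), and then passes to the limit in the weak formulation by symmetrizing $(x,y)$. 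You instead use dilation/rotation covariance plus $(-\Delta_p)^s(cu)=c^{p-1}(-\Delta_p)^su$ to reduce the whole statement to a single scalar $\mathcal{C}=(-\Delta_p)^s v_\beta(e)$, which you evaluate via an inversion-symmetric excision. This is cleaner: the homogeneity argument immediately yields the form $\mathcal{C}\,|x|^{\beta(p-1)-sp}$, the Kelvin inversion makes the folding a genuine symmetry rather than a bookkeeping device, and your single estimate $O(\varepsilon^{p(1-s)})$ for the discarded shell replaces the paper's three-case analysis for $g_{\beta,\varepsilon}$. Your treatment of case (b) by inversion (exact cancellation when $ps=N$) and the sign analysis of $\mathcal{C}(\beta)$ are also correct, and in fact the paper asserts \eqref{signodecbeta} without a written argument, so your inspection of the integrand fills that in.

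The one real gap is the sentence claiming that ``the usual equivalence between the pointwise and the weak fractional $p$-Laplacian applies'': this is precisely the content of the paper's Step~4 and is not automatic for $p\neq 2$. To fill it you should note that for $\varphi\in C_c^\infty(\mathbb{R}^N\setminus\{0\})$ the bilinear form is absolutely convergent, so one may write it as $\lim_{\varepsilon\to0}\iint_{|x-y|>\varepsilon}$, symmetrize in $(x,y)$ using the antisymmetry of $\Psi_p(v_\beta(x)-v_\beta(y))$, and then pass the limit inside using that your pointwise computation (with the ball excision, after your crescent correction) converges uniformly on $\mathrm{supp}\,\varphi$ because $v_\beta\in C^\infty(\mathbb{R}^N\setminus\{0\})$. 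Since $v_\beta$ is smooth and radial, this is routine, but it should appear explicitly rather than be invoked as ``usual.''
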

	
	\medskip

	It should be noted that point \textit{(a)} of the above theorem is mentioned in \cite[Example 1.5]{MR3861716}, but this is 
	only in the case where $2 < p \le N+1$ and $0 < s < \frac{p-1}{p}$, and without a detailed proof. 
	Additionally, in \cite[Appendix A]{MR3461371}, the result is proven for the case $N > sp$ and $\frac{N-sp}{p} \le \beta < \frac{N}{p-1}$. 
	Here, we provide a complete proof covering all cases, which requires delicate estimates along with some nontrivial explicit computations.
	
	We also observe that a similar result holds for $N=1$, though in this work we focus on the case $N \ge 2$.
	
	The assertion of Theorem \ref{theorem:Liouville} is formally natural due to the scaling properties of the fractional integral operator. 
	Indeed, when considering a rescaling, $u_\lambda(x) = u(\lambda x)$, the fractional integral scales accordingly, 
	suggesting that the result should hold at least in a heuristic sense. However, making this argument rigorous presents substantial 
	challenges due to regularity issues arising from the nonlocal nature of the operator. Establishing the result requires careful analysis, particularly in the 
	delicate logarithmic case. Moreover, determining the precise sign of the constant $\mathcal{C}(\beta)$ 
	in terms of $\beta$ is itself a subtle and nontrivial task.

 	\begin{remark}
	    By \cite{MR4333435}, \(v_\beta\) and \(v\) solutions are viscosity solutions. 
	    Furthermore, since our solutions are  $C^\infty(\mathbb{R}^{N}\setminus\{0\})$ 
	    viscosity solutions that have non-zero gradients in $\mathbb{R}^{N}\setminus\{0\}$,
	    they are also classical solutions.
		For the definitions of viscosity and weak solutions, see 
		Section \ref{laprevia}.
	\end{remark}	
	
	\begin{remark}
		A review of fundamental solutions in the local case even for fully nonlinear operators can be found in \cite{MR2663711}. In this scenario, distributional setting is not possible so the equations hold only in $\mathbb{R}^{N}\setminus\{0\}$, this is our approach. In the classical literature fundamental solution satisfies also the equation in $\mathbb{R}^{N}$ with $\delta_0$ as a right hand side. In our setting when $ps-N\not=0$ the fundamental solution corresponds to the case $\beta=\tfrac{ps-N}{p-1}.$ for the above power function and in the standard case (that is $p=2$ and $s=1$), we have $\beta=2-N$. Here, we abuse of the fundamental solution name since we include all values of the power $\beta$ in the corresponding admissible range. 						
	\end{remark}

	In a parabolic context,  the fundamental solution for the fractional $p-$laplacian
	in self-similar variables is found in \cite{MR4114983}.
	
	\medskip

    As mentioned before, our main application of the fundamental solution of the fractional
	$p-$laplacian is to establish two models of Liouville-type theorems depending on the order between  $N$ and $ps$. We start with the case $N\le ps$ that corresponds to 
	the classical type of Liouville results.

	\begin{theorem}[First Liouville-type theorem]\label{theorem:Liouville} 
		Let $N\ge2,0<s<1,$ and $1<p<\infty.$ If $N\le ps$ and $u$ is a non-negative
		lower semi-continuous weak solution of
		\begin{equation}\label{eq:supersol}
			(-\Delta_p)^s u \ge 0\quad\text{in }\mathbb{R}^N,
		\end{equation}
		then $u$ is constant.
	\end{theorem}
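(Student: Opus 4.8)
The plan is to exclude non‑constant solutions by comparing $u$ with the fundamental solution provided by Theorem~\ref{Theorem:Fundamental}, the mechanism being that for $N\le ps$ this fundamental solution does not decay at infinity. Since $(-\Delta_p)^s$ is unchanged when a constant is added to $u$, we may assume $\inf_{\mathbb R^N}u=0$. If this infimum is attained at some $x_0$, the strong minimum principle for $(-\Delta_p)^s$ forces $u\equiv0$: at an interior minimizer the operator reduces to $-2C(s,p,N)\int_{\mathbb R^N}|x_0-y|^{-N-sp}\,\Psi_p(u(y))\,dy$, which is $\le0$, so together with $(-\Delta_p)^su\ge0$ it must vanish, forcing $\Psi_p(u(y))=0$ a.e., i.e.\ $u\equiv0$. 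Hence we may assume $u$ is not constant and $u>0$ on all of $\mathbb R^N$. For $r>0$ set $M(r):=\inf_{\overline{B_r}}u$: this is strictly positive (an infimum of a positive lower semicontinuous function over a compact set), $M$ is non‑increasing, and $M(r)\downarrow\inf_{\mathbb R^N}u=0$ as $r\to\infty$.

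We claim that for every fixed $r_0>0$ one has $u\ge M(r_0)$ on $\mathbb R^N\setminus\overline{B_{r_0}}$. Since trivially $u\ge M(r_0)$ on $\overline{B_{r_0}}$, this would give $\inf_{\mathbb R^N}u\ge M(r_0)>0$, contradicting the normalization, so $u$ must be constant. To prove the claim when $N<ps$, put $\beta:=\tfrac{ps-N}{p-1}>0$; by \eqref{signodecbeta} we have $\mathcal C(\beta)=0$, so $|x|^{\beta}$ is $p$‑harmonic in $\mathbb R^N\setminus\{0\}$, and hence so is $a|x|^{\beta}+b$ for all $a,b\in\mathbb R$ (using the positive homogeneity of degree $p-1$ of $(-\Delta_p)^s$ and the oddness of $\Psi_p$). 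For $R$ large, consider on the annulus $A_{r_0,R}:=B_R\setminus\overline{B_{r_0}}$ the barrier
\[
  \Phi_R(x):=\Bigl(\,M(r_0)\,\tfrac{R^{\beta}-|x|^{\beta}-r_0^{\beta}}{R^{\beta}-r_0^{\beta}}\,\Bigr)^{+}.
\]
Because $\beta>0$, $|x|^{\beta}$ is bounded near the origin, so $\Phi_R$ is nonsingular; being the positive part of a $p$‑harmonic function, it is a weak subsolution in $A_{r_0,R}$; and one checks $\Phi_R\le M(r_0)\le u$ on $\overline{B_{r_0}}$ and $\Phi_R\equiv0\le u$ on $\mathbb R^N\setminus B_R$, so $\Phi_R\le u$ on $\mathbb R^N\setminus A_{r_0,R}$. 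The comparison principle on $A_{r_0,R}$ gives $u\ge\Phi_R$ there; fixing $x$ with $|x|>r_0$ and letting $R\to\infty$, the quotient tends to $1$ — this is exactly where the hypothesis $N<ps$ (i.e.\ $\beta>0$) enters, since then $|x|^{\beta}$ is unbounded and the affine‑in‑$|x|^{\beta}$ profile flattens to the constant $M(r_0)$ — so $u(x)\ge M(r_0)$. When $N=ps$ one runs the same scheme with the logarithmic fundamental solution $v(x)=\log|x|$ of Theorem~\ref{Theorem:Fundamental}(b) in place of $|x|^{\beta}$; this is the delicate case, because $\log|x|$ is unbounded near the origin, so one cannot simply take a positive part and must build the barrier more carefully — for instance as the $p$‑harmonic replacement of the datum equal to $M(r_0)$ on $\overline{B_{r_0}}$ and $0$ on $\mathbb R^N\setminus B_R$, and then show, again using the non‑decay of $\log|x|$, that these replacements increase to $M(r_0)$ as $R\to\infty$.

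The step I expect to be the main obstacle is the comparison itself, precisely because of nonlocality: the barrier must be prescribed on all of $\mathbb R^N$ and be a genuine subsolution inside the annulus while lying below $u$ outside it, and the obvious fix of capping the fundamental solution from above by the constant $M(r_0)$ fails, since a minimum of supersolutions is again a supersolution, not a subsolution. One is therefore forced to work with positive parts of affine modifications of the fundamental solution and to control its behaviour at the origin, which is what makes the logarithmic case $N=ps$ substantially harder. Two further ingredients are used throughout: the precise sign of $\mathcal C(\beta)$ from \eqref{signodecbeta} (to know which powers $|x|^{\beta}$ are sub‑ and which are supersolutions) and the strong minimum principle invoked in the reduction to $u>0$.
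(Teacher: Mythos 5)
Your overall strategy -- build a barrier from the fundamental solution, compare on annuli, and send the outer radius to infinity to pin $u$ above $M(r_0)$ -- is essentially what the paper does. For $N<ps$ you choose the borderline exponent $\beta=\tfrac{ps-N}{p-1}$ (so $\mathcal C(\beta)=0$ and $|x|^{\beta}$ is exactly $p$-harmonic) and take the positive part of the affine modification. That gives a subsolution \emph{automatically} via the fact that the maximum of two subsolutions is a subsolution, with no error term to control. The paper instead takes $\beta$ strictly interior to $\left(0,\tfrac{ps-N}{p-1}\right)$, where $\mathcal C(\beta)>0$, and caps the barrier flat at a small inner radius $\varepsilon$; this capping is a \emph{min} and so is not automatically a subsolution, and the strict positivity of $\mathcal C(\beta)$ is exactly what absorbs the resulting error (this is the content of Lemma~\ref{lemma:aux12}, which needs $\varepsilon$ small). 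So your $N<ps$ argument is a somewhat cleaner variant of the same idea, provided you cite the max-of-subsolutions lemma for the nonlocal operator and keep in mind that the kink of $\Phi_R$ lies inside the annulus. Your concluding reduction (normalize $\inf u=0$, distinguish whether the infimum is attained, and at an attained minimizer evaluate $(-\Delta_p)^s u(x_0)\le 0$ pointwise) is morally right, but as stated it assumes pointwise evaluability; the paper makes this step rigorous by passing to the viscosity framework (Remark~\ref{remark.visco.weak}), and you should do the same.

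The genuine gap is the case $N=ps$, which you explicitly defer. The difficulty you identify (that $\log|x|$ is unbounded near the origin, so a bare positive part does not work) is real, but the alternative you propose -- take the $p$-harmonic replacement of the datum $M(r_0)$ on $\overline{B_{r_0}}$ and $0$ outside $B_R$, and show it increases to $M(r_0)$ as $R\to\infty$ -- leaves the essential work undone. Showing that those replacements increase to $M(r_0)$ requires a sub-barrier growing logarithmically, so the scheme as described is circular: the whole point is to \emph{produce} such a subsolution. The paper instead modifies the fundamental solution directly: it sets $\upxi_\varepsilon(x)=\log|x|-\log\varepsilon+\kappa\zeta_\varepsilon(x)$ with $\zeta_\varepsilon$ a smooth bump equal to $1$ on $B_{\varepsilon/2}$ and supported in $B_{\varepsilon}$, forms $\upvarrho_\varepsilon=\upxi_\varepsilon-\log|x|$, and verifies by a direct computation (using Theorem~\ref{Theorem:Fundamental} together with \cite[Lemma 2.8]{MR3593528}) that for $\kappa$ large the resulting function is a subsolution on the annulus. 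Without some such explicit construction and subsolution check, the $N=ps$ half of the theorem is not proved.
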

	
	\begin{remark}\label{remark.visco.weak}
		Again as the previous remark by \cite{MR4030247}, we know that a non-negative lower semi-continuous 
		weak solution of \eqref{eq:supersol} is also a viscosity solution. 
		Then the previous theorem also holds if we consider viscosity solutions 
		instead of weak ones. See also \cite{MR4333435}.
	\end{remark}

	Our second Liouville-type theorem is for an equation involving the fractional $p-$laplacian operator
	(with $N>ps$) and a zero-order power nonlinearity, this corresponds to in the literature as the Lane-Emden type equations.
	
	\begin{theorem}[Second Liouville-type theorem]
	\label{theorem:nonlinear}
		Let $N\ge2,0<s<1,$ $1<p<\infty,$ and $N>ps.$  
		\begin{itemize}
			\item If $0<q< \tfrac{N(p-1)}{N-ps}$ and 
			$u\in C(\mathbb{R}^N)$ is a non-negative 
			 viscosity solution of 
			\begin{equation}\label{eq:nl1}
				(-\Delta_p)^s u-u^q \ge 0\quad\text{in }\mathbb{R}^N
			\end{equation}
			then $u\equiv0.$
            		  \item If $q>\tfrac{N(p-1)}{N-ps}$ then there is a positive 
            		  solution of 
		          \eqref{eq:nl1}.
		\end{itemize}

	\end{theorem}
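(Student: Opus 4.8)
\textbf{Proof proposal for Theorem \ref{theorem:nonlinear}.}

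The plan is to treat the two items separately, using the fundamental solution from Theorem \ref{Theorem:Fundamental}(a) as a comparison barrier in both directions.

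For the non-existence part ($0<q<\frac{N(p-1)}{N-ps}$), I would first establish a quantitative lower bound on any non-negative nontrivial viscosity supersolution. The idea is to show that if $u\not\equiv 0$, then by the strong maximum principle for the fractional $p$-laplacian (applicable since $(-\Delta_p)^s u\ge u^q\ge 0$), $u$ is strictly positive, and moreover $u$ cannot decay too fast at infinity. Concretely, I would use the fundamental solution $v_\beta(x)=|x|^\beta$ with a suitable $\beta<0$ in the admissible range $\left(-\tfrac{N}{p-1},0\right)$: since $N>ps$ we have $\tfrac{ps-N}{p-1}<0$, and by \eqref{signodecbeta} the constant $\mathcal{C}(\beta)$ is negative for $\beta$ below $\tfrac{ps-N}{p-1}$ and positive for $\tfrac{ps-N}{p-1}<\beta<0$. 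Choosing $\beta$ in the range where $\mathcal{C}(\beta)<0$ makes $v_\beta$ a subsolution away from the origin, and one compares $u$ from below against a rescaled truncated multiple $\varepsilon |x|^\beta$ on large annuli. Letting the annulus grow, one obtains $u(x)\ge c|x|^{\beta}$ for $|x|$ large, for every admissible $\beta$ with $\mathcal{C}(\beta)<0$; optimizing over $\beta$ gives the sharp decay rate $u(x)\gtrsim |x|^{-\frac{N-ps}{p-1}}$ as $|x|\to\infty$ (this rate is exactly the borderline $\beta=\tfrac{ps-N}{p-1}$, approached from below). Feeding this lower bound back into the equation, $(-\Delta_p)^s u\ge u^q \gtrsim |x|^{-q\frac{N-ps}{p-1}}$, and then testing against the fundamental-solution machinery (or iterating the comparison with $v_\gamma$ for $\gamma(p-1)-sp$ matching the new right-hand side exponent) bootstraps a faster-growing lower bound as long as the exponent relation $q\cdot\frac{N-ps}{p-1}<\frac{N}{p-1}$ holds — i.e. $q<\frac{N(p-1)}{N-ps}$ — eventually forcing $u$ to grow at infinity, contradicting any finite bound, unless $u\equiv 0$. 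An alternative and cleaner route for this step is a direct rescaling/test-function argument: set $u_\lambda(x)=\lambda^{\frac{ps}{q-(p-1)}}u(\lambda x)$ (the scaling that leaves \eqref{eq:nl1} invariant when $q>p-1$) — but note $q<\frac{N(p-1)}{N-ps}$ can be below $p-1$, so one must handle the sublinear regime $q\le p-1$ by a separate, simpler argument (if $q<p-1$, a bounded solution would have to satisfy $u^q\ge c u$ near its infimum, and $(-\Delta_p)^s u\ge cu$ with $u$ bounded below forces $u$ constant hence $0$).

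For the existence part ($q>\frac{N(p-1)}{N-ps}$), the plan is to exhibit an explicit solution of the form $u(x)=A(1+|x|^2)^{-\gamma}$ or, more simply, to build a solution by comparison between two power barriers. Since $q>\frac{N(p-1)}{N-ps}$, the exponent $\gamma:=\frac{ps}{q-(p-1)}$ satisfies $\gamma<\frac{N-ps}{p-1}$, so that $-\gamma$ lies in the admissible interval $\left(-\tfrac{N}{p-1},0\right)$ and also $-\gamma>\tfrac{ps-N}{p-1}$, hence by \eqref{signodecbeta} we have $\mathcal{C}(-\gamma)>0$. Thus $v_{-\gamma}(x)=|x|^{-\gamma}$ satisfies $(-\Delta_p)^s v_{-\gamma}=\mathcal{C}(-\gamma)|x|^{-\gamma(p-1)-ps}=\mathcal{C}(-\gamma)|x|^{-\gamma q}=\mathcal{C}(-\gamma)(v_{-\gamma})^q$ in $\mathbb{R}^N\setminus\{0\}$ — an exact equality of the right homogeneity. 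One then replaces the singular profile near the origin with a smooth bounded cap: take $U(x)=\min\{|x|^{-\gamma}, \text{smooth bounded function}\}$, or regularize to $U_\delta(x)=(\delta^2+|x|^2)^{-\gamma/2}$, and verify by a direct (monotone) computation that for $\delta$ small and after multiplying by a suitable constant, $U_\delta$ is a supersolution of \eqref{eq:nl1} on all of $\mathbb{R}^N$; the nonlocal contribution from the far field only helps the inequality in the right direction because the tail of $U_\delta$ agrees with $|x|^{-\gamma}$. Since we only need some positive solution (or supersolution) of the differential inequality $(-\Delta_p)^s u-u^q\ge 0$, producing this explicit bounded positive supersolution suffices, and one can conclude by rescaling it to absorb the constant $\mathcal{C}(-\gamma)$.

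The main obstacle I anticipate is the non-existence part, specifically the bootstrap/iteration that turns the initial decay estimate into a contradiction: one must carefully justify the comparison principle for viscosity supersolutions of the fractional $p$-laplacian on unbounded annular domains against the fundamental solutions $v_\beta$, controlling the nonlocal tail terms uniformly as the domains exhaust $\mathbb{R}^N$, and one must correctly handle the full parameter range — in particular the awkward subcritical-in-a-different-sense regime $q\le p-1$ where the natural scaling degenerates. The logarithmic borderline $N=ps$ is excluded here by hypothesis, which simplifies matters, but the interplay between the sign of $\mathcal{C}(\beta)$ and the admissible interval for $\beta$ must be tracked precisely, exactly as in the comparison arguments used for Theorem \ref{theorem:Liouville}.
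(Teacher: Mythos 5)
Your existence part is essentially the paper's approach: pick $\upkappa=\tfrac{sp}{q-p+1}$, note that $q>\tfrac{N(p-1)}{N-ps}$ forces $-\upkappa\in\left(\tfrac{ps-N}{p-1},0\right)$ so $\mathcal{C}(-\upkappa)>0$, cap the fundamental solution near the origin (the paper uses the profile $(1+|x|)^{-\upkappa}$, with the elementary inequality $(1+|x|)|z|\le 1+|(1+|x|)z-e_1|$ to control the nonlocal integral), and then rescale by $\mathcal{C}(-\upkappa)^{1/(q-p+1)}$. Your variant profiles would work just as well once the same monotone comparison is checked.

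Your non-existence argument, however, diverges from the paper and as sketched has a real gap. You correctly extract the decay lower bound $m(R)\gtrsim R^\beta$ for $\beta$ near (but strictly below) $\tfrac{ps-N}{p-1}$ via comparison with the truncated fundamental solution — this is precisely the paper's Hadamard-type Lemma \ref{lemma:Had1}, established through the barriers of Section \ref{hp}. But you then propose a Gidas-type bootstrap that is not executable in this framework: the power barriers $v_\gamma$ with $\mathcal{C}(\gamma)\le 0$ that could be used for a lower-bound comparison live only in $\gamma\le\tfrac{ps-N}{p-1}$ or $\gamma\ge 0$, so the iteration cannot push the exponent through the gap $\left(\tfrac{ps-N}{p-1},0\right)$, and you never specify a comparison barrier that matches the iterated source exponent while staying in the admissible range. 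Moreover, the phrase \emph{eventually forcing $u$ to grow at infinity, contradicting any finite bound} presupposes an a priori bound on $u$ that you do not have; the only monotone quantity is $m(R)=\min_{\overline{B_R}}u$, which is non-increasing, but you would need your iterated lower bound to force $m(R)\to\infty$, which, as noted, the iteration cannot deliver.

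The paper's route is quite different and worth noting. Instead of iterating a lower bound, it derives a matching \emph{upper} bound on $m(R)$: it builds a rescaled cutoff supersolution $\upzeta(x)=m(\tfrac R2)\mu(|x|/R)$ with $(-\Delta_p)^s\upzeta\le Cm(\tfrac R2)^{p-1}R^{-ps}$ in $B_R(0)$ (via \cite[Prop.~2.12]{MR3593528}), locates a global minimum point $x_R$ of $u-\upzeta$, and evaluates the viscosity inequality for $u$ with $\upzeta$ (shifted) as a test function there, which yields
\[
m(R)^q\le u(x_R)^q\le (-\Delta_p)^s\upzeta(x_R)\le C\,m\!\left(\tfrac R2\right)^{p-1}R^{-ps},
\]
then upgrades $m(\tfrac R2)$ to $m(R)$ with the second Hadamard lemma. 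When $q\le p-1$ this already contradicts $m$ non-increasing and positive; when $q>p-1$ it gives $m(R)\le CR^{\kappa}$ with $\kappa=-\tfrac{ps}{q-p+1}$, and since $q<\tfrac{N(p-1)}{N-ps}$ guarantees $\kappa<\tfrac{ps-N}{p-1}$, one can pick an admissible $\beta\in(\kappa,\tfrac{ps-N}{p-1})$, so the Hadamard lower bound $m(R)\ge Cm(r_0)R^\beta$ clashes with $m(R)\le CR^\kappa$ as $R\to\infty$. No bootstrap is involved. There is also a technical point you did not anticipate: when $1<p\le\tfrac2{2-s}$ and the minimum point $x_R$ is a non-isolated critical point of $\upzeta$, $\upzeta$ is not an admissible viscosity test function and must be perturbed by $-m(\tfrac R2)|x-x_R|^\gamma$ with $\gamma>\tfrac{sp}{p-1}$; the paper handles this in its Case 2, and this kind of repair is necessary for the argument to cover all $p\in(1,\infty)$.

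Finally, your sketched dispatch of the range $q\le p-1$ (a supposed bounded solution, $u^q\ge cu$ near the infimum) is not correct: $u$ is not assumed bounded, $u^q\ge cu$ near the infimum needs justification, and the conclusion ``$(-\Delta_p)^s u\ge cu$ forces $u$ constant'' is not a theorem you have available. The paper's unified inequality $m(R)^q\le Cm(R)^{p-1}R^{-ps}$ handles this range with no extra work.
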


	For the proofs of our Liouville-type theorems, we proceed similarly to \cite{MR2739791},  but we need  some extra
	delicate estimates together with some new ideas due to the strongly nonlinear character of the operator.
	A review on Liouville-type theorems of this type can be found in \cite{MR2739791},  for other previous results see also 
	\cite{MR4149690,MR1321809, MR0577096,MR1723262}.
	
	\medskip
	
	For the $p$-laplacian (case $s=1$), the last 
	two theorems were proved in \cite{MR1990294}, see also \cite{MR1723262} where even systems are consider. Furthermore, in \cite{MR1990294}, it is shown that the first point of Theorem \ref{theorem:nonlinear} also holds when 
	$q= \tfrac{N(p-1)}{N-ps}$, sometimes known as  the critical case for super-solution. 
	Unfortunately, we have not yet been able to establish this result within our current framework, so we leave it as an open problem, stated below. The difficulty in our approach is to compute a log perturbation of the fundamental solution thus 
	is related with a product rule for the fractional $p-$laplacian.
	
	\medskip
	
	\noindent{\bf Open problem:} {\it Let $N\ge2,0<s<1,$ $1<p<\infty,$ and $N>ps.$
		If $q=\tfrac{N(p-1)}{N-p}$ and 
			$u\in C(\mathbb{R}^N)$ is a non-negative 
			 viscosity solution of \eqref{eq:nl1} then $u\equiv0.$
	} 
	
	\medskip
	
	In this work, we focus on dimensions \(N\ge 2\) and exclude the one-dimensional case \(N=1.\) We note, however, that the case 
	\(N=1\) might offer additional insight into the behavior of solutions, potentially simplifying certain arguments or even allowing 
	the construction of explicit counterexamples to open questions, such as higher regularity. 
	This case could be an interesting direction for future investigation.

	\subsection*{The paper is organized as follows.} In Section \ref{laprevia}, 
	we give the definition of weak and viscosity solutions. 
	In Section \ref{sfs}, we prove Theorem \ref{Theorem:Fundamental}.
	 Afterward, in Section \ref{hp}, we prove some Hadamard properties that will be fundamental 
	 to proving our Liouville results. Finally, in Sections \ref{st1}  and \ref{st2}, we prove Theorems
	 \ref{theorem:Liouville} and \ref{theorem:nonlinear}.


\section{preliminaries}\label{laprevia}
		Throughout this paper, $\Omega$ is an open set of 
		$\mathbb{R}^N,$  and
		$s\in(0,1),$ $p\in(1,\infty).$ 	
		The fractional Sobolev spaces $W^{s,p}(\Omega)$
		is defined to be the set of functions $u\in L^{p}(\Omega)$ such that
		\[
			|u|_{W^{s,p}(\Omega)}^p\coloneqq
			\int_{\Omega^2}
			\dfrac{|u(x)-u(y)|^p}{|x-y|^{N+sp}}\, dxdy<\infty,
		\]
		where $\Omega^2$ denotes $\Omega\times\Omega.$
		The fractional Sobolev spaces admit the following norm
		\[
			\|u\|_{W^{s,p}(\Omega)}\coloneqq\left(\|u\|_{L^{p}(\Omega)}^p+|u|_{W^{s,p}(\Omega)}^p
			\right)^{\frac1p},
		\]
		where
		\[
			\|u\|_{L^{p}(\Omega)}^p\coloneqq\int_\Omega |u(x)|^p\, dx.
		\]
		We also denote 
		\[
			L_{s}^{p-1}(\mathbb{R}^N)\coloneqq
				\left\{
					u\in L^{p-1}_{loc}(\mathbb{R}^N)\colon
					\int_{\mathbb{R}^{N}}
						\frac{|u|^{p-1}}{(1+|x|)^{N+sp}}dx
						<\infty
				\right\}.
		\]
		
		For convenience, we will adopt the notation \(\Psi_p(t) = |t|^{p-2} t\) throughout this paper.
		
		\begin{remark}
			Let us note that, in Theorem \ref{Theorem:Fundamental}, 
			we chose 
			$\beta\in\left(-\tfrac{N}{p-1},\tfrac{ps}{p-1}\right)$ 
			so that
			$v_\beta(x)=|x|^{\beta}\in L_{s}^{p-1}(\mathbb{R}^N).$ 
		\end{remark}
		
		\begin{definition}[Weak solution]
			Let $f\colon\Omega\times\mathbb{R}\to\mathbb{R}$ 
			be a continuous function.
			A function $u\in L_{s}^{p-1}(\mathbb{R}^N)$ is 
			a weak super-solution (sub-solution) of 
			\begin{equation}\label{eq:weak}
				(-\Delta_p)^su(x)=f(x,u) \text{ in } \Omega,
			\end{equation}
			if for any bounded open $U\subseteq\Omega$ 
			we have that $u\in W^{s,p}_{loc}(U)$ and
			\[
				\int_{\mathbb{R}^{2N}}
				\frac{\Psi_p(u(x)-u(y))(\varphi(x)-
				\varphi(y))}{|x-y|^{N+sp}}dxdy
				\ge(\le)\int_{\mathbb{R}^{N}}f(x,u)\varphi(x)dx,
			\]
			for any non-negative function 
			$\varphi\in C^\infty_c(U).$ 
			We say that $u$ is a weak solution of 
			\eqref{eq:weak} if it is both a weak super-solution and sub-solution to the problem.
		\end{definition}
		
		\medskip
		
		Following \cite{MR4030247}, we define our notion of
		viscosity super-solution of \eqref{eq:weak}. We start to introduce some notation.
		
		\medskip 
		
		The set of critical points
		of a differentiable function $u$ and the distance  from the
		critical points are denoted by
		\[
			N_u\coloneqq\{x\in\Omega\colon\nabla u(x)=0\},
			\quad\text{and}
			\quad d_u(x)\coloneqq\mathrm{dist}(x,N_u),
		\]
		respectively. Let $D\subset\Omega$ be an open set. We denote
		the class of $C^2-$functions whose gradient and Hessian are
		controlled by $d_u$ as
		\[
			C^2_{\gamma}(D)\coloneqq\left\{
			u\in C^2(\Omega)\colon \sup_{x\in D}
			\left(\dfrac{\min\{d_u(x),1\}^{\gamma-1}}
			{|\nabla u(x)|}+\dfrac{|D^2u(x)|}{d_u(x)^{\gamma-2}}
			\right)<\infty\right\}.
		\]
		
		Observe that, if $\gamma\ge 2 $ then  $u(x)=|x|^\gamma\in C^2_\gamma.$

		\medskip
		
		\begin{definition}[Viscosity Solution]
			Let $f\colon\Omega\times\mathbb{R}\to\mathbb{R}$ be a continuous function.
			We say that a function $u\colon\mathbb{R}^N\to[-\infty,\infty]$
			is a viscosity super-solution (sub-solution) of \eqref{eq:weak} if it satisfies
			the following four assumptions:
			\begin{enumerate}
				\item[(VS1)] $u<\infty$ ($u>-\infty$) a.e. in $\mathbb{R}^N$ and
					$u>-\infty$ ($u<\infty$) everywhere in $\Omega;$
				\item[(VS2)] $u$ is lower (upper) semi-continuous in $\Omega;$
				\item[(VS3)] If $\phi\in C^2(B_r(x_0))$ for some 
				$B_r(x_0)\subset\Omega$ such that $\phi(x_0)=u(x_0)$ and 
				$\phi\le u$ ($\phi\ge u$) in $B_r(x_0),$ and one of the following holds
				\begin{enumerate}
					\item[(a)]$p>\tfrac2{2-s}$ or 
						$\nabla \phi(x_0)\neq0;$
					\item[(b)]$1<p\le\tfrac2{2-s};$
						$\nabla \phi(x_0)=0$ such that $x_0$ is an isolate
						critical point of $\phi,$ and 
						$\phi\in C^2_{\gamma}(B_r(x_0))$ 
						for some
						$\gamma>\tfrac{sp}{p-1};$ 
				\end{enumerate}
				then $(-\Delta_p)^s\phi_r(x_0)\ge (\le) f(x_0,u),$ where
				\begin{equation}\label{ec:fir}
					\phi_r(x)=
					\begin{cases}
						\phi &\text{ if } x\in B_r(x_0),\\
						u(x) &\text{otherwise};
					\end{cases}
				\end{equation}
				\item[(VS4)] $u_-\coloneqq\max\{-u,0\}$ $(u_+\coloneqq\max\{u,0\})$
					belongs to $\in L_{s}^{p-1}(\mathbb{R}^N).$   
			\end{enumerate}
			Finally, $u$ is a 
			viscosity solution if it is both a viscosity
			super-solution and sub-solutions.
		\end{definition}


\section{Fundamental solution}\label{sfs}
	In this section, we will prove the main result of this article (Theorem \ref{Theorem:Fundamental}).
	To simplify the presentation, we split the proof into two cases. 

\subsection{Case \texorpdfstring{$ps\neq N$}{}}
	
	This subsection aims to prove the following result.
	
	\begin{theorem} \label{theorem:casepsneqN}
		Let $N\ge2,0<s<1,$ and $1<p<\infty.$ If $ps\neq N$ then 
		\[
			v_\beta(x)=|x|^{\beta}\quad \beta\in\left(-\tfrac{N}{p-1},\tfrac{ps}{p-1}\right),
		\] 
		is a weak solution of 
		\begin{equation}\label{eq:psneqN}
			(-\Delta_p)^s v_\beta(x)= \mathcal{C}(\beta)|x|^{\beta(p-1)-sp}\quad\text{in }\mathbb{R}^N\setminus\{0\},
		\end{equation}
		where $\mathcal{C}(\beta)$ is defined by \eqref{eq:cbeta}.
	\end{theorem}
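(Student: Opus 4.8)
The plan is to compute $(-\Delta_p)^s v_\beta(x)$ directly at an arbitrary point $x \neq 0$ and show the result equals $\mathcal{C}(\beta)|x|^{\beta(p-1)-sp}$, then argue that this pointwise computation yields a weak solution. First I would exploit the scaling and rotational symmetry of the problem: since $v_\beta(\lambda x) = \lambda^\beta v_\beta(x)$ and the operator $(-\Delta_p)^s$ scales with weight $sp - \beta(p-1)$ (each factor $u(x)-u(y)$ contributes $\beta$, there are $p-1$ of them in $\Psi_p$, the kernel contributes $-(N+sp)$, and $dy$ contributes $N$), it suffices to evaluate at a single point, say $x = e_1$. Thus $(-\Delta_p)^s v_\beta(x) = |x|^{\beta(p-1)-sp}\,(-\Delta_p)^s v_\beta(e_1)$, and the whole content is to show the finite number $(-\Delta_p)^s v_\beta(e_1)$ equals $\mathcal{C}(\beta)$ as given by the integral formula \eqref{eq:cbeta}. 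Before doing so I must justify that the principal-value integral defining the operator actually converges at $e_1$: near $y = e_1$ the integrand behaves like $|v_\beta(e_1) - v_\beta(y)|^{p-1}/|e_1 - y|^{N+sp} \sim |\nabla v_\beta(e_1)|^{p-1}|e_1-y|^{p-1-N-sp}$ (using that $v_\beta$ is smooth with nonvanishing gradient away from $0$), which is integrable near $e_1$ after the antisymmetric cancellation when $p \ge 2$ and needs the symmetric PV cancellation when $p < 2$; at $y = 0$ the integrand is $|1 - 0|^{p-1}/|e_1|^{N+sp}$-type, harmless; and at $y = \infty$ it decays like $|y|^{\beta(p-1) - N - sp}$, integrable precisely because $\beta < \frac{ps}{p-1}$. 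The lower bound $\beta > -\frac{N}{p-1}$ ensures $v_\beta \in L^{p-1}_s(\mathbb{R}^N)$, which is needed for the weak formulation.

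The core computation is to reduce the $N$-dimensional integral to the claimed one-dimensional integral. Writing $y = \rho\omega$ in polar coordinates with $\rho > 0$, $\omega \in \mathbb{S}^{N-1}$, we get
\[
(-\Delta_p)^s v_\beta(e_1) = 2\lim_{\varepsilon \to 0^+}\int_0^\infty \rho^{N-1} \left(\int_{\mathbb{S}^{N-1}} \frac{\Psi_p(1 - \rho^\beta \langle \cdots\rangle?)}{|e_1 - \rho\omega|^{N+sp}}\,d\sigma(\omega)\right)d\rho,
\]
but note $v_\beta(\rho\omega) = \rho^\beta$ does not depend on $\omega$, so the numerator is $\Psi_p(1 - \rho^\beta)$, independent of $\omega$. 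Hence the angular integral is $\int_{\mathbb{S}^{N-1}} |e_1 - \rho\omega|^{-(N+sp)}\,d\sigma(\omega)$, which by the standard substitution $\langle e_1, \omega\rangle = \cos\theta$ and $|e_1 - \rho\omega|^2 = 1 - 2\rho\cos\theta + \rho^2$ becomes a one-variable integral that I would identify with a hypergeometric expression; this is where the functions $B(\tfrac{N-1}{2},\tfrac12)$, $F(\tfrac{N+ps}{2},\tfrac{ps+2}{2};\tfrac{N}{2};\cdot)$ and the prefactor $\alpha_N = \pi^{(N-3)/2}/\Gamma(\tfrac{N-1}{2})$ enter — via the Euler integral representation of ${}_2F_1$ after expanding $(1 - 2\rho\cos\theta + \rho^2)^{-(N+sp)/2}$. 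One obtains $\int_{\mathbb{S}^{N-1}} |e_1 - \rho\omega|^{-(N+sp)}\,d\sigma(\omega) = c_N\, G(\rho^2)$ for an explicit constant absorbing the surface-measure normalization. Then the radial integral $\int_0^\infty \Psi_p(1-\rho^\beta)\rho^{N-1} G(\rho^2)\,d\rho$ is split at $\rho = 1$; on $(1,\infty)$ I substitute $\rho \mapsto 1/\rho$ and use the homogeneity/symmetry of $G$ under $t \mapsto 1/t$ (a Kummer/Pfaff-type transformation of the hypergeometric function, reflecting the geometric symmetry $|e_1 - \rho\omega| = \rho|e_1 - \rho^{-1}\omega|$) to fold it back onto $(0,1)$. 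This produces the combination $|1-\rho^\beta|^{p-2}(1-\rho^\beta)[\rho^{N-1} - \rho^{ps - \beta(p-1) - 1}]$ with the single factor $G(\rho^2)$, matching \eqref{eq:cbeta} exactly; the $4\pi\alpha_N$ bookkeeping comes from combining the two constants.

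Having established the pointwise identity in $\mathbb{R}^N \setminus \{0\}$ — and noting $v_\beta \in C^\infty(\mathbb{R}^N\setminus\{0\})$ with $(-\Delta_p)^s v_\beta$ locally bounded there — I would upgrade to a weak solution by a standard argument: for $\varphi \in C_c^\infty(\mathbb{R}^N \setminus \{0\})$, apply Fubini and the antisymmetrization trick to write $\int_{\mathbb{R}^{2N}} \Psi_p(v_\beta(x)-v_\beta(y))(\varphi(x)-\varphi(y))|x-y|^{-(N+sp)}\,dx\,dy = \int_{\mathbb{R}^N} (-\Delta_p)^s v_\beta(x)\,\varphi(x)\,dx$, where the double integral is absolutely convergent because $v_\beta \in W^{s,p}_{loc}$ away from $0$ (again using $\beta < \tfrac{ps}{p-1}$, or rather $\beta > \tfrac{sp-N}{p}$ locally — here one only needs local $W^{s,p}$ on compact subsets of $\mathbb{R}^N\setminus\{0\}$, which holds for all $\beta$ in the stated range since near any $x_0 \neq 0$ the function is smooth) and $\varphi$ is compactly supported away from the origin. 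I expect the main obstacle to be the hypergeometric identification — both producing the ${}_2F_1$ from the angular integral via the Euler integral representation and, more delicately, verifying the $t \mapsto 1/t$ transformation law for $G$ that makes the fold-over on $(1,\infty)$ collapse to exactly the bracketed term in \eqref{eq:cbeta} rather than something with a modified hypergeometric factor; a secondary technical point is the careful handling of the principal value near $y = e_1$ when $1 < p < 2$, where the $\varepsilon \to 0^+$ limit must be controlled using the second-order Taylor expansion of $v_\beta$ and the oddness of the first-order term over the sphere $\partial B_\varepsilon(e_1)$.
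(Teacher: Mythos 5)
Your outline captures the right ingredients — the radial reduction to a one-dimensional integral via polar coordinates, the angular integral producing the hypergeometric function $G$, the $\rho\mapsto 1/\rho$ fold-over using $|e_1-\rho\omega|=\rho\,|e_1-\rho^{-1}\omega|$, and the antisymmetrization against a test function supported away from the origin. These are all present in the paper's proof, and your justification of the absolute convergence of the bilinear form $\int_{\mathbb{R}^{2N}}\Psi_p(v_\beta(x)-v_\beta(y))(\varphi(x)-\varphi(y))|x-y|^{-(N+sp)}$ is also essentially what the paper invokes in Step 4.

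However, you treat the passage from the truncated integral to the clean formula as ``secondary,'' when it is in fact where almost all the technical content lies, and your proposal does not actually close it. The point is that for $p\le\frac{1}{1-s}$ (equivalently $p-1\le sp$) the integrand $|\Psi_p(v_\beta(e_1)-v_\beta(y))|\,|e_1-y|^{-(N+sp)}\sim|e_1-y|^{p-1-N-sp}$ is \emph{not} absolutely integrable near $y=e_1$, so you cannot pass freely between the principal-value (ball-truncated) integral, the untruncated polar decomposition, and Fubini: the clean formula $\int_0^\infty\Psi_p(1-\rho^\beta)\rho^{N-1}\mathcal{K}(\rho)\,d\rho$ that you fold over is itself a conditionally convergent object, and one must track a specific truncation through the whole computation. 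The paper resolves this by using the \emph{annular} cutoff $A_\varepsilon(x)=\{y:\,||x|-|y||<\varepsilon\}$ — which is compatible with polar coordinates and the fold-over — rather than the symmetric ball cutoff implicit in your principal value. After the fold-over, the truncated quantity splits as $h_{\beta,\varepsilon}(x)|x|^{\beta(p-1)-sp}-g_{\beta,\varepsilon}(x)$, and the error term $g_{\beta,\varepsilon}$ is an integral over the asymmetric sliver $(1-\tfrac{\varepsilon}{|x|},\tfrac{|x|}{|x|+\varepsilon})$ adjacent to $\rho=1$; proving $g_{\beta,\varepsilon}\to0$ in $L^1_{\mathrm{loc}}$ requires a three-case analysis ($p>\tfrac{1}{1-s}$, $p<\tfrac{1}{1-s}$, $p=\tfrac{1}{1-s}$), using differentiability of $H(\rho)=(1-\rho)^{1+ps}G(\rho^2)$ up to $\rho=1$ and integrations by parts to beat the non-integrable power of $|1-\rho|$. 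Nothing in your sketch (odd cancellation of the first-order Taylor term, etc.) substitutes for this: odd cancellation tells you the principal value \emph{exists}, but it does not let you identify its value with the formula obtained by the formal polar/fold-over manipulation, because the ball-truncated region does not map to a $\rho$-interval under the polar change of variables. This missing identification is the genuine gap, and it is precisely what Steps 1–3 of the paper supply.
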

		
	
    We introduce a bit of notation to take advantage of the fact that $v_\beta$ is a radial function.
    Given $\varepsilon>0$ and  we define 
	\[ 
		A_\varepsilon(x)\coloneqq\{y\in\mathbb{R}^N\colon ||x|-|y||<\varepsilon\}=B_{|x|+\varepsilon}(0)\setminus 
		\overline{B_{|x|-\varepsilon}}(0)
	\]
	and
	\[
		J_\varepsilon v_{\beta}(x)\coloneqq 2\int\displaylimits_{\mathbb{R}^N\setminus A_\varepsilon(x)}
		\dfrac{\Psi_p(v_{\beta}(x)-v_{\beta}(y))}{|x-y|^{N+sp}} dy.
	\]
	Notice that $J_\varepsilon v_{\beta}(x)$ is finite  for all $x\in\mathbb{R}^N$ since 
	$\beta\in \left(-\tfrac{N}{p-1},\tfrac{ps}{p-1}\right).$

	\medskip
	
	{\it Ideas of the proof of Theorem \ref{theorem:casepsneqN}.}
		We proceed somewhat as in the proof of 
		\cite[Proof of Lemma 3.1]{MR2469027}.
		
		First, we will show that 
		\begin{equation}\label{eq:Jbe}
			J_\varepsilon v_{\beta}(x)=h_{\beta,\varepsilon}(x)|x|^{\beta(p-1)-sp}-
	 		g_{\beta,\varepsilon}(x)\text{ in }\mathbb{R}^N\setminus\{0\},
		\end{equation}
		where
		\begin{align*}
			h_{\beta,\varepsilon}(x)
	 			&\coloneqq\pi\alpha_N\int\displaylimits_{0}^{1-\frac{\varepsilon}{|x|}}
				\Psi_p(1-\rho^{\beta})\left[\rho^{N-1}-\rho^{ps-\beta(p-1)-1}\right]
				G(\rho^{2},N,ps)d\rho,\\
			g_{\beta,\varepsilon}(x)&\coloneqq 4\pi\alpha_N|x|^{{\beta(p-1)-sp}}
	 		\int\displaylimits_{1-\frac{\varepsilon}{|x|}}^{\frac{|x|}{|x|+\varepsilon}}
	 		\Psi_p(1-\rho^{\beta})\rho^{sp-\beta(p-1)-1}G(\rho^{2},N,ps)d\rho.
		\end{align*}
		The function \(G\) is defined in Theorem \ref{Theorem:Fundamental}.
	
		Then we will show that
		\[
			h_{\beta,\varepsilon}(x)|x|^{{\beta(p-1)-sp}}\to \mathcal{C}(\beta)|x|^{{\beta(p-1)-sp}}
			\quad \text{ and } \quad g_{\beta,\varepsilon}(x)\to 0,
		\]
		strongly in $L^1_{\text{loc}}(\mathbb{R}^N\setminus\{0\})$ as $\varepsilon\to 0^+.$ 
		
		Finally, we will conclude our result.
	\medskip
	
	\begin{proof}[Proof of Theorem \ref{theorem:casepsneqN}]
		Throughout this proof, we assume that $ps\neq N,$ 
		$\beta\in\left(-\tfrac{N}{p-1},\tfrac{ps}{p-1}\right),$ $\varepsilon>0$ and 
		$x\in\mathbb{R^N}\setminus\{0\},$ and we simply use the notations
		$G(t)$ instead of $G(t,N,ps).$ We split the proof into four steps.
		
		\medskip
		
		\noindent{\bf Step 1}. 
			The first part of the proof shows \eqref{eq:Jbe}.
			\medskip
		
			We begin by observing that
			\begin{align*}
				J_\varepsilon v_{\beta}(x) &=\displaystyle
				2\int\displaylimits_{\mathbb{R}^N\setminus A_\varepsilon(x)}
				\dfrac{\Psi_p(v_{\beta}(x)-v_{\beta}(y))}{|x-y|^{N+sp}} dy\\
				&=2\int\displaylimits_{\mathbb{R}^N\setminus A_\varepsilon(x)}
				\dfrac{||x|^\beta-|y|^\beta|^{p-2}(|x|^\beta-|y|^\beta)}{|x-y|^{N+sp}} dy\\
				&=2|x|^{\beta(p-1)-sp}
				\int\displaylimits_{\mathbb{R}^N\setminus A_\varepsilon(x)}
				\frac{\left|1-\left(\tfrac{|y|}{|x|}\right)^\beta \right|^{p-2}
				\left(1-\left(\tfrac{|y|}{|x|}\right)^\beta\right)}
				{\left|\tfrac{x}{|x|}-\tfrac{y}{|x|}\right|^{N+sp}} \dfrac{dy}{|x|^N}.\\
			\end{align*}
			Thus, by a simple change of variable and by rotation invariance, we have that
			\[
				J_\varepsilon v_{\beta}(x)=2|x|^{\beta(p-1)-sp}
				\int\displaylimits_{\mathbb{R}^N\setminus A_{\nicefrac{\varepsilon}{|x|}}(e_1)}
				\dfrac{\Psi_p\left(1-|y|^\beta \right)}{\left|e_1-y\right|^{N+sp}} dy,
			\]
			where $e_1=(1,0,\dots,0)\in\mathbb{R}^N.$ 
	
			We now take $y=\rho z$ with $\rho>0$ and 
			$z\in\mathbb{S}^{N-1}\coloneqq\{w\in\mathbb{R}^{N}\colon |w|=1\},$ we get
			\begin{align*}
				&J_\varepsilon v_{\beta}(x)=2|x|^{\beta(p-1)-sp}
				\int\displaylimits_{|1-\rho|\ge\tfrac{\varepsilon}{|x|}}
				\Psi_p\left(1-\rho^\beta \right)
				\int\displaylimits_{\mathbb{S}^{N-1}}\dfrac{d\mathcal{H}^{N-1}(z)}
				{\left|e_1-\rho z\right|^{N+sp}} \rho^{N-1}d\rho\\
				&=2|x|^{\beta(p-1)-sp}
				\int\displaylimits_{|1-\rho|\ge\tfrac{\varepsilon}{|x|}}
				\Psi_p\left(1-\rho^\beta \right)
				\int\displaylimits_{\mathbb{S}^{N-1}}\dfrac{d\mathcal{H}^{N-1}(z)}
				{\left|1-2\rho e_1\cdot z+\rho^2\right|^{\frac{N+sp}2}}
				\rho^{N-1}d\rho.
			\end{align*}
			Using \cite[page 249]{MR2917408} and \cite[3.665 (427)]{MR2360010}, we have that
			\[
				J_\varepsilon v_{\beta}(x)=4\pi\alpha_N|x|^{\beta(p-1)-sp}
				\int\displaylimits_{|1-\rho|\ge\tfrac{\varepsilon}{|x|}}
				\Psi_p\left(1-\rho^\beta \right)\rho^{N-1}\mathcal{K}(\rho) d\rho,
			\]
			where
			\begin{equation}\label{krodef}
	            	\mathcal{K}(\rho)\coloneqq
				\int_0^\pi \dfrac{\sin^{N-2}(\theta)d\theta}
				{\left|1-2\rho \cos(\theta)+\rho^2\right|^{\frac{N+sp}2}}
				=\begin{cases}
						G(\rho^2) &\text{if } \rho<1,\\[5pt]
						\frac{G(\rho^{-2})}{\rho^{N+ps}} &\text{if } \rho>1.
					\end{cases}
            \end{equation}
			Therefore,
			\begin{align*}
				J_\varepsilon & v_{\beta}(x)=4\pi\alpha_N|x|^{\beta(p-1)-sp}
				\left\{
					\int\displaylimits_{1+\frac{\varepsilon}{|x|}}^{\infty}
					\Psi_p\left(1-\rho^\beta\right)\frac{G(\rho^{-2})}{\rho^{ps+1}}d\rho\right.\\
					&\hspace{3.5cm}+\left.\int\displaylimits_{0}^{1-\frac{\varepsilon}{|x|}}
					\Psi_p\left(1-\rho^\beta\right)\rho^{N-1}G(\rho^{2})d\rho
				\right\}\\
				&=4\pi\alpha_N|x|^{\beta(p-1)-sp}
				\left\{
					\int\displaylimits_{0}^{\frac{|x|}{|x|+\varepsilon}}
					\Psi_p\left(1-\rho^{-\beta}\right)\rho^{ps-1}G(\rho^{2})d\rho\right.\\
					&\hspace{3.5cm}+\left.\int\displaylimits_{0}^{1-\frac{\varepsilon}{|x|}}
					\Psi_p\left(1-\rho^\beta\right)\rho^{N-1}G(\rho^{2})d\rho
					\right\}\\
				&=4\pi\alpha_N|x|^{\beta(p-1)-sp}
				\left\{
					\int\displaylimits_{0}^{1-\frac{\varepsilon}{|x|}}
				\Psi_p\left(1-\rho^\beta\right)\left[\rho^{N-1}-\rho^{ps-
					\beta(p-1)-1}\right]G(\rho^{2})d\rho\right.\\
					&\hspace{3.5cm}-
					\left.
					\int\displaylimits_{1-\frac{\varepsilon}{|x|}}^{\frac{|x|}{|x|+\varepsilon}}
					\Psi_p\left(1-\rho^\beta\right)\rho^{ps-\beta(p-1)-1}G(\rho^{2})d\rho
				\right\}.
			\end{align*}
			 So,
			 \[
			 	J_\varepsilon v_{\beta}(x)=h_{\beta,\varepsilon}(x)|x|^{\beta(p-1)-sp}-
			 	g_{\beta,\varepsilon}(x)\text{ in }\mathbb{R}^N\setminus\{0\}.
			 \]

		 \medskip
		 
		\noindent{\bf Step 2}. We now show that
			\[
				h_{\beta,\varepsilon}(x)|x|^{{\beta(p-1)-sp}}\to \mathcal{C}(\beta)|x|^{{\beta(p-1)-sp}}
				\quad \text{ strongly in } 
				L^1_{\text{loc}}(\mathbb{R}^N\setminus\{0\}), 
			\]
			as $\varepsilon\to 0^+.$ 
		
			Given a bounded set $U\subset\mathbb{R}^N\setminus\{0\}$ such that 
				$\overline{U}\subset\mathbb{R}^N\setminus\{0\},$ we want to show that
				\begin{align*}
					&0=\frac1{4\pi\alpha_N}
					    \lim_{\varepsilon\to 0^+}\int\limits_{U} 
					|h_{\beta,\varepsilon}(x)|x|^{{\beta(p-1)-sp}}- \mathcal{C}(\beta)|x|^{{\beta(p-1)-sp}}|dx\\
					&=\frac1{4\pi\alpha_N}
					\lim_{\varepsilon\to 0^+}\int\limits_{U} 
					\left|h_{\beta,\varepsilon}(x)- 
					\mathcal{C}(\beta)|
					\right|x|^{{\beta(p-1)-sp}}dx\\
					&=\lim_{\varepsilon\to 0^+}\int\limits_{U}
					|x|^{{\beta(p-1)-sp}} 
					\left|\int\limits_{1-\frac{\varepsilon}{|x|}}^1 \Psi_p\left(1-\rho^\beta\right)
					(\rho^{N-1}-\rho^{ps-\beta(p-1)-1})G(\rho^2)d\rho\right| dx.
				\end{align*}
		
			Since $\overline{U}\subset\mathbb{R}^N\setminus\{0\}$ is bounded, we have that 
			$|x|^{{\beta(p-1)-sp}} \in L^\infty(U).$ Therefore, it is enough to prove that  
				\begin{equation}\label{eq:step2}
					\lim_{\varepsilon\to 0^+}\int\limits_{U}
					\int\limits_{1-\frac{\varepsilon}{|x|}}^1 |1-\rho^\beta|^{p-1}
						|\rho^{N-1}-\rho^{ps-\beta(p-1)-1}|G(\rho^2)d\rho dx=0.
				\end{equation}
		
			Let $H(\rho)\coloneqq (1-\rho)^{1 +ps} G(\rho^2).$ 
			By \cite[page 271, (2)]{MR0501762}, we have that
			\begin{equation}\label{eq:Hcont}
				\lim_{\rho\to 1^-} H(\rho) 
			\end{equation}
			exists. Then, 
			\[
				|1-\rho^\beta|^{p-1}
				|\rho^{N-1}-\rho^{ps-\beta(p-1)-1}|G(\rho^2)=
				\dfrac{|1-\rho^\beta|^{p-1}
				|\rho^{N-1}-\rho^{ps-\beta(p-1)-1}|}{|1-\rho|^{1+ps}}
				H(\rho),
			\]
			belongs to $L^1(0,1).$ Therefore, by the dominated convergence theorem, we get
			\eqref{eq:step2}.
		
		\medskip
		 
		\noindent{\bf Step 3}. 
			Our next goal is to show that,  
			\begin{equation}\label{eq:step3}
				g_{\beta,\varepsilon}(x)\to 0 \quad\text{strongly in } 
					L^1_{\text{loc}}(\mathbb{R}^N\setminus\{0\}). 
			\end{equation}
			as $\varepsilon\to0^+.$ 
			
			Again, let $U\subset\mathbb{R}^N\setminus\{0\}$ be a bounded set such that 
			$\overline{U}\subset\mathbb{R}^N\setminus\{0\}.$ In this case, we want to show 
			that
			\begin{align*}
				0&=\lim_{\varepsilon\to0^+}\int\limits_U|g_{\beta,\varepsilon}(x)|dx\\
				&=4\pi\alpha_N\lim_{\varepsilon\to0^+}\int\limits_U
				|x|^{{\beta(p-1)-sp}}
		 		\left|\int\displaylimits_{1-\frac{\varepsilon}{|x|}}^{\frac{|x|}{|x|+\varepsilon}}
		 	\Psi_p\left(1-\rho^\beta\right)\rho^{sp-\beta(p-1)-1}G(\rho^{2})d\rho\right| dx.
			\end{align*}
			
			Since $\overline{U}\subset\mathbb{R}^N\setminus\{0\}$ is bounded, we have that 
			$|x|^{{\beta(p-1)-sp}} \in L^\infty(U),$  and 
			\[
				\left|1-\rho^\beta \right|\le C |1-\rho|,
			\]
			where $C$ is a positive constant that depends on $\beta,p,$ and $\text{dist}(U,0).$ Then
			\[
				\int\limits_U|g_{\beta,\varepsilon}(x)|dx
				\le C\int\limits_U
		 		\int\displaylimits_{1-\frac{\varepsilon}{|x|}}^{\frac{|x|}{|x|+\varepsilon}}
		 		\left|1-\rho\right|^{p-1}
				G(\rho^{2})d\rho dx= C\int\limits_U
				\int\displaylimits_{1-\frac{\varepsilon}{|x|}}^{\frac{|x|}{|x|+\varepsilon}}
		 		\dfrac{H(\rho)}{|1-\rho|^{p(s-1)-2}}d\rho dx,
			\]
			where $C$ is a positive constant that depends on $\||x|^{\beta(p-1)-sp}\|_{L\!\prescript{\infty}{}(U)},$ 
			$\beta,p,$ and $\text{dist}(U,0).$
			Therefore, to show \eqref{eq:step3}, it is enough to show that 
			\begin{equation}\label{eq:step3p}
				\lim_{\varepsilon\to0^+}
				\int\limits_U
				\int\displaylimits_{1-\frac{\varepsilon}{|x|}}^{\frac{|x|}{|x|+\varepsilon}}
		 		\dfrac{H(\rho)}{|1-\rho|^{p(s-1)+2}}d\rho dx=0.
			\end{equation}
			
			To prove this, we consider three cases.
			 
			 \medskip
			 
			\noindent{\it Case 1:} $p>\tfrac{1}{1-s}.$ 
			
			By \eqref{eq:Hcont}, $\dfrac{H(\rho)}{|1-\rho|^{p(s-1)+2}}\in L^1(0,1).$
			Thus, by the dominated convergence theorem, we get
			\eqref{eq:step3p}.
				
			\medskip
				
			\noindent{\it Case 2:} We now assume $p<\tfrac{1}{1-s}.$ 
			
			By \cite[pages 257 (5) and 271 (2) ]{MR0501762}, we have that $H$ is differentiable and
			\begin{equation}\label{eq:Hderivada}
				\lim_{\rho\to {1}^-}H^\prime(\rho)
			\end{equation}
			exists.
			Notice that for any $x\in U$ we have
			\begin{align*}
				&\int\displaylimits_{1-\frac{\varepsilon}{|x|}}^{\frac{|x|}{|x|+\varepsilon}}
		 		\dfrac{H(\rho)}{|1-\rho|^{p(s-1)+2}}d\rho 
		 		=\\
		 		&\dfrac1{p(s-1)+1}\left\{
		 		\dfrac{ H\left(\dfrac{|x|}{|x|+\varepsilon}\right)}
		 		{\left(\dfrac{\varepsilon}{|x|+\varepsilon}\right)^{p(s-1)+1}}
		 		-
		 		\dfrac{ H\left(1-\dfrac{\varepsilon}{|x|}\right)}
		 		{\left(\dfrac{\varepsilon}{|x|}\right)^{p(s-1)+1}}
		 		-
		 		\int\displaylimits_{1-\frac{\varepsilon}{|x|}}^{\frac{|x|}{|x|+\varepsilon}}
		 		\dfrac{H^\prime(\rho)}{|1-\rho|^{p(s-1)+1}}d\rho\right\}.
			\end{align*}
			Taking $\delta=\text{dist}(U,0),$ and using \eqref{eq:Hcont} and \eqref{eq:Hderivada}, there
			are two positive constants $C_1$ and $C_2$ such  
			for any $x\in U$ we have
			\begin{align*}
				\left|\dfrac{ H\left(\dfrac{|x|}{|x|+\varepsilon}\right)}
		 		{\left(\dfrac{\varepsilon}{|x|+\varepsilon}\right)^{p(s-1)+1}}
		 		\right.&-
		 		\left.\dfrac{ H\left(1-\dfrac{\varepsilon}{|x|}\right)}
		 		{\left(\dfrac{\varepsilon}{|x|}\right)^{p(s-1)+1}}\right|=\\
		 		&\le\varepsilon^{p(1-s)}
		 		(|x|+\varepsilon)^{p(s-1)+1}
		 			\dfrac{\left|H\left(\dfrac{|x|}{|x|+\varepsilon}\right)
		 			-H\left(1-\dfrac{\varepsilon}{|x|}\right)\right|}{\varepsilon}\\
		 			&\quad+H\left(1-\dfrac{\varepsilon}{|x|}\right)
		 			\dfrac{\left|(|x|+\varepsilon)^{p(s-1)+1}-|x|^{p(s-1)+1}\right|}{\varepsilon}\\
		 		&\le\varepsilon^{p(1-s)}
		 		\Bigg\{
		 			\dfrac{\|H^\prime\|_{{L\!\prescript{\infty}{}(0,1)}}}{\delta^{p(1-s)+1}}
		 			+\dfrac{p(1-s)}{\delta^{p(s-1)}}\|H\|_{{L\!\prescript{\infty}{}(0,1)}}
		 		\Bigg\} \\
		 		&\le C_1\varepsilon^{p(1-s)},
			\end{align*}
			and
			\[
				\int\displaylimits_{1-\frac{\varepsilon}{|x|}}^{\frac{|x|}{|x|+\varepsilon}}
		 		\dfrac{H^\prime(\rho)}{|1-\rho|^{p(s-1)+1}}d\rho\le C_2\varepsilon^{p(1-s)}.
			\]
			
			Then
			\[
				\int\displaylimits_{1-\frac{\varepsilon}{|x|}}^{\frac{|x|}{|x|+\varepsilon}}
		 		\dfrac{H(\rho)}{|1-\rho|^{p(s-1)+2}}d\rho \le C_1\varepsilon^{p(1-s)}+C_2\varepsilon^{p(1-s)} \quad \forall 
		 		x\in U.
			\]
		    This implies \eqref{eq:step3p}.
			
			\medskip
				
			\noindent{\it Case 3:} Finally we consider the case $p=\tfrac{1}{1-s}.$ 
			
			In this case, for any $x\in U$ we have
			\begin{align*}
				&\int\displaylimits_{1-\frac{\varepsilon}{|x|}}^{\frac{|x|}{|x|+\varepsilon}}
		 		\dfrac{H(\rho)}{|1-\rho|^{p(s-1)+2}}d\rho 
		 		=
				\int\displaylimits_{1-\frac{\varepsilon}{|x|}}^{\frac{|x|}{|x|+\varepsilon}}
		 		\dfrac{H(\rho)}{|1-\rho|}d\rho \\
		 		&=	 H\left(\dfrac{|x|}{|x|+\varepsilon}\right)
		 		\log\left(\dfrac{\varepsilon}{|x|+\varepsilon}\right)
		 		-
		 		H\left(1-\dfrac{\varepsilon}{|x|}\right)
		 		{\log\left(\dfrac{\varepsilon}{|x|}\right)}
		 		-
		 		\int\displaylimits_{1-\frac{\varepsilon}{|x|}}^{\frac{|x|}{|x|+\varepsilon}}
		 		H^\prime(\rho)\log(1-\rho)d\rho\\
		 		&=\left\{ H\left(\dfrac{|x|}{|x|+\varepsilon}\right)-H\left(1-\dfrac{\varepsilon}{|x|}\right)
		 		\right\}
		 		\log\left(\dfrac{\varepsilon}{|x|+\varepsilon}\right)\\
		 		&+
		 		H\left(1-\dfrac{\varepsilon}{|x|}\right)
		 		\left\{\log\left(\dfrac{\varepsilon}{|x|+\varepsilon}\right)-
		 		\log\left(\dfrac{\varepsilon}{|x|}\right)\right\}
		 		-
		 		\int\displaylimits_{1-\frac{\varepsilon}{|x|}}^{\frac{|x|}{|x|+\varepsilon}}
		 		H^\prime(\rho)\log(1-\rho)d\rho\\
		 		&=\left\{ H\left(\dfrac{|x|}{|x|+\varepsilon}\right)-H\left(1-\dfrac{\varepsilon}{|x|}\right)
		 		\right\}
		 		\log\left(\dfrac{\varepsilon}{|x|+\varepsilon}\right)
		 		+
		 		H\left(1-\dfrac{\varepsilon}{|x|}\right)\log\left(\dfrac{|x|}{|x|+\varepsilon}\right)\\
		 		&-\int\displaylimits_{1-\frac{\varepsilon}{|x|}}^{\frac{|x|}{|x|+\varepsilon}}
		 		H^\prime(\rho)\log(1-\rho)d\rho.
			\end{align*}
			Again, taking $\delta=\text{dist}(U,0),$ and using \eqref{eq:Hcont} and \eqref{eq:Hderivada}, there
			are three positive constants $C_1,$ $C_2$ and $C_3$ such
			for any $x\in U$ we have
			\begin{align*}
				&\int\displaylimits_{1-\frac{\varepsilon}{|x|}}^{\frac{|x|}{|x|+\varepsilon}}
		 		\dfrac{H(\rho)}{|1-\rho|^{p(s-1)+2}}d\rho 
		 		\le\\
				&\le -\dfrac{\|H^\prime\|_{{L\!\prescript{\infty}{}(0,1)}}}{\delta^2}\varepsilon^2	
				\log\left(\dfrac{\varepsilon}{\delta+\varepsilon}\right)+
		 		\|H \|_{{L\!\prescript{\infty}{}(0,1)}}\dfrac{\varepsilon}{\delta}
		 		-\|H^\prime\|_{{L\!\prescript{\infty}{}(0,1)}}
		 		\int\displaylimits_{1-\frac{\varepsilon}{\delta}}^{1}
		 		\log(1-\rho)d\rho\\
		 		&\le -\dfrac{\|H^\prime\|_{{L\!\prescript{\infty}{}(0,1)}}}{\delta^2}\varepsilon^2	
				\log\left(\dfrac{\varepsilon}{\delta+\varepsilon}\right)+
		 		\|H \|_{{L\!\prescript{\infty}{}(0,1)}}\dfrac{\varepsilon}{\delta}
		 		-\|H^\prime\|_{{L\!\prescript{\infty}{}(0,1)}}
		 		\frac{\varepsilon}{\delta}\left(\log(\varepsilon)-\log(\delta)-1\right)\\
		 		&\le -C_1\varepsilon^2\log\left(\dfrac{\varepsilon}{\delta+\varepsilon}\right)
		 		-C_2\varepsilon\log(\varepsilon)+C_3\varepsilon,
			\end{align*}  
			from which \eqref{eq:step3p} follows.
			
			\medskip
			
		\noindent{\bf Step 4}. Finally, we will show that  
			$v_\beta$ is a weak solution of \eqref{eq:psneqN}.
				
			By step 1, we have that
			\[
				J_\varepsilon v_{\beta}(x)=h_{\beta,\varepsilon}(x)|x|^{\beta(p-1)-sp}-
	 			g_{\beta,\varepsilon}(x)\text{ in }\mathbb{R}^N\setminus\{0\}.
	 		\]
	 		Then, given $\varphi\in C^{\infty}_c(\mathbb{R}^{N}\setminus\{0\})$ we have that
	 		\[
	 			\int\displaylimits_{\mathbb{R}^N} J_\varepsilon v_{\beta}(x) \varphi(x) dx =
	 			\int\displaylimits_{\mathbb{R}^N}\left(h_{\beta,\varepsilon}(x)|x|^{\beta(p-1)-sp}-
	 			g_{\beta,\varepsilon}(x)\right)\varphi(x)\,dx,
	 		\]
	 		that is
	 		\begin{equation}\label{eq:step4_1}
	 			\begin{aligned}
	 				\int\displaylimits_{\mathbb{R}^{2N}}
						(1-\rchi_{A_\varepsilon(x)}(y))&
						\dfrac{\Psi_p(v_{\beta}(x)-v_{\beta}(y))}{|x-y|^{N+sp}}
							\varphi(x) dy dx\\
						&=\frac12\int\displaylimits_{\mathbb{R}^N}
							\left(h_{\beta,\varepsilon}(x)|x|^{\beta(p-1)-sp}-
	 						g_{\beta,\varepsilon}(x)\right)\varphi(x)\,dx.
	 				\end{aligned}
	 			\end{equation}
	 		Interchanging the roles of $x$ and $y$
	 			\begin{equation}\label{eq:step4_2}
	 				\begin{aligned}
	 					\int\displaylimits_{\mathbb{R}^{2N}}
						(1-\rchi_{A_\varepsilon(y)}(x))&
						\dfrac{\Psi_p(v_{\beta}(y)-v_{\beta}(x))}{|x-y|^{N+sp}}
							\varphi(y) dx dy\\
						&=\frac12\int\displaylimits_{\mathbb{R}^N}
							\left(h_{\beta,\varepsilon}(x)|x|^{\beta(p-1)-sp}-
	 						g_{\beta,\varepsilon}(x)\right)\varphi(x)\,dx
	 				\end{aligned}
	 			\end{equation}
	 			
	 		Now, adding \eqref{eq:step4_1} and \eqref{eq:step4_2} and using that
	 			$1-\rchi_{A_\varepsilon(y)}(x)=1-\rchi_{A_\varepsilon(x)}(y)$, we get
	 			\begin{equation}\label{eq:step4_3}
	 				\begin{aligned}
	 					&\int\displaylimits_{\mathbb{R}^N}
								\left(h_{\beta,\varepsilon}(x)|x|^{\beta(p-1)-sp}-
	 						g_{\beta,\varepsilon}(x))\right)\varphi(x)\,dx=\\
	 					&\int\displaylimits_{\mathbb{R}^{2N}}
							(1-\rchi_{A_\varepsilon(x)}(y))
							\dfrac{\Psi_p(v_{\beta}(x)-v_{\beta}(y))(\varphi(x)-\varphi(y))}{|x-y|^{N+sp}}
							 dy dx.
	 				\end{aligned}
	 			\end{equation}
	 			
	 		By steps 2 and 3
	 			\begin{equation}\label{eq:step4_4}
	 				\int\displaylimits_{\mathbb{R}^N}
							\left(h_{\beta,\varepsilon}(x)|x|^{\beta(p-1)-sp}-
	 						g_{\beta,\varepsilon}(x))\right)\varphi(x)\,dx
	 						\to
	 						\mathcal{C}(\beta)
	 						\int\displaylimits_{\mathbb{R}^N}|x|^{\beta(p-1)-sp}\varphi(x) dx.
	 			\end{equation}
                as $\varepsilon\to 0^+.$
                				
			On the other hand, we have that
				\[
					\dfrac{\Psi_p(v_{\beta}(x)-v_{\beta}(y))(\varphi(x)-\varphi(y))}{|x-y|^{N+sp}}
						\in L^{1}(\mathbb{R}^N\times\mathbb{R}^N),
				\]
			and 
				\[
					\rchi_{A_\varepsilon(x)}(y)\to 0
					\text{ a.e. in }\mathbb{R}^{2N} \text{ as }\varepsilon \to 0^+.  
				\]
				
			Then
	 		\begin{equation}\label{eq:step4_5}
	 			\begin{aligned}
	 				\lim_{\varepsilon\to 0^+}
							\int\displaylimits_{\mathbb{R}^{2N}}&\!
						(1-\rchi_{A_\varepsilon(x)}(y))
						\dfrac{\Psi_p(v_{\beta}(x)-v_{\beta}(y))(\varphi(x)-\varphi(y))}{|x-y|^{N+sp}}
							 dy dx\\
	 					&=\int\displaylimits_{\mathbb{R}^{2N}}
						\dfrac{\Psi_p(v_{\beta}(x)-v_{\beta}(y))(\varphi(x)-\varphi(y))}{|x-y|^{N+sp}}
							 dy dx.\\			
	 			\end{aligned}
	 		\end{equation}
	 			
	 		Since $\varphi$ is arbitrary, by \eqref{eq:step4_3}, \eqref{eq:step4_4} and \eqref{eq:step4_5},
	 		we conclude that $v_\beta$ is a weak solution of \eqref{eq:psneqN}.
	\end{proof}

\subsection{Case \texorpdfstring{$ps=N$}{}}
	To complete study of the fundamental solution of the fractional $p-$laplacian, we 
	prove the following result.
	
	\begin{theorem} \label{theorem:caseps=N}
		Let $N\ge2,0<s<1,$ and $1<p<\infty.$ If $ps= N$ then 
		\[
			v(x)=\log(|x|),
		\] 
		is a weak solution of 
		\begin{equation}\label{eq:ps=N}
			(-\Delta_p)^s v(x)= 0\quad\text{in }\mathbb{R}^N\setminus\{0\}.
		\end{equation}
	\end{theorem}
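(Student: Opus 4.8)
The plan is to follow the scheme of the proof of Theorem~\ref{theorem:casepsneqN}, now with $v(x)=\log|x|$ in place of $v_\beta$. First note that $v\in C^\infty(\mathbb{R}^N\setminus\{0\})$, so $v\in W^{s,p}_{\mathrm{loc}}(U)$ for every bounded open $U\subset\mathbb{R}^N\setminus\{0\}$, and that $v\in L^{p-1}_s(\mathbb{R}^N)$, since the weight $(1+|x|)^{-(N+sp)}$ absorbs both the logarithmic singularity at the origin and the growth at infinity; hence the weak formulation of \eqref{eq:ps=N} makes sense. For $\varepsilon>0$ and $x\neq0$ I set
\[
	J_\varepsilon v(x)\coloneqq 2\int_{\mathbb{R}^N\setminus A_\varepsilon(x)}\frac{\Psi_p(v(x)-v(y))}{|x-y|^{N+sp}}\,dy,
\]
which is finite for every $x\neq0$ because $A_\varepsilon(x)$ removes a neighbourhood of the sphere $\{|y|=|x|\}$ and $v\in L^{p-1}_s(\mathbb{R}^N)$ controls the tail. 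The argument then proceeds in three steps.

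\textbf{Step 1 (closed form for $J_\varepsilon v$).} Since $v(x)-v(y)=-\log(|y|/|x|)$, the change of variables $y=|x|z$, rotation invariance and polar coordinates $z=\rho\omega$, together with \cite[page 249]{MR2917408} and \cite[3.665 (427)]{MR2360010} exactly as in the proof of Theorem~\ref{theorem:casepsneqN}, give, using that $\Psi_p$ is odd and that $sp=N$,
\[
	J_\varepsilon v(x)=4\pi\alpha_N|x|^{-N}\int_{|1-\rho|\ge \varepsilon/|x|}\Psi_p(-\log\rho)\,\rho^{N-1}\mathcal{K}(\rho)\,d\rho,
\]
with $\mathcal{K}$ as in \eqref{krodef}. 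Splitting the integral at $\rho=1$, using \eqref{krodef} on each piece, and substituting $\rho\mapsto1/\rho$ in the part over $(1+\varepsilon/|x|,\infty)$, the crucial observation --- and the one place where $ps=N$ is really used --- is that the inversion reproduces, up to sign, exactly the integrand $\Psi_p(-\log\rho)\,\rho^{N-1}G(\rho^2)$ of the $\rho<1$ piece (the power $\rho^{\,ps-1}$ arising from $\rho^{N-1}\mathcal{K}(\rho)$ and the Jacobian coincides with $\rho^{N-1}$ precisely because $ps=N$). Hence the two ``bulk'' integrals over $(0,1-\varepsilon/|x|)$ and over $(0,|x|/(|x|+\varepsilon))$ cancel, and since $|x|/(|x|+\varepsilon)>1-\varepsilon/|x|$ we are left with only the boundary layer
\[
	J_\varepsilon v(x)=-4\pi\alpha_N|x|^{-N}\int_{1-\varepsilon/|x|}^{|x|/(|x|+\varepsilon)}\Psi_p(-\log\rho)\,\rho^{N-1}G(\rho^2)\,d\rho .
\]

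\textbf{Step 2 ($J_\varepsilon v\to0$ locally uniformly).} Fix a bounded open $U$ with $\overline U\subset\mathbb{R}^N\setminus\{0\}$ and put $\delta=\mathrm{dist}(U,0)>0$. Let $H(\rho)=(1-\rho)^{1+ps}G(\rho^2)$; by the argument used in the proof of Theorem~\ref{theorem:casepsneqN} (see \eqref{eq:Hcont} and \cite[page 271, (2)]{MR0501762}) the limit $\lim_{\rho\to1^-}H(\rho)$ exists, so $H$ is bounded near $\rho=1$. Using $|\Psi_p(-\log\rho)|\le C|1-\rho|^{p-1}$ and $\rho^{N-1}\le1$ for $\rho$ near $1$, and $ps=N$, one gets $\bigl|\Psi_p(-\log\rho)\,\rho^{N-1}G(\rho^2)\bigr|\le C|H(\rho)|\,|1-\rho|^{-(N+2-p)}$ there. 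On $[\,1-\tfrac{\varepsilon}{|x|},\tfrac{|x|}{|x|+\varepsilon}\,]$ one has length $\le(\varepsilon/|x|)^2$ and $|1-\rho|\ge\tfrac12\varepsilon/|x|$ for $\varepsilon$ small, so the integral in Step~1 is $O\!\bigl((\varepsilon/\delta)^{\min\{2,\,p-N\}}\bigr)$. Since $ps=N$ and $s<1$ force $p=N/s>N$, this exponent is positive; multiplying by $|x|^{-N}\le\delta^{-N}$ gives $\|J_\varepsilon v\|_{L^\infty(U)}\to0$, hence $J_\varepsilon v\to0$ in $L^1_{\mathrm{loc}}(\mathbb{R}^N\setminus\{0\})$. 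Then \textbf{Step 3 (conclusion)}: for $\varphi\in C^\infty_c(\mathbb{R}^N\setminus\{0\})$ the symmetrization performed in Step~4 of the proof of Theorem~\ref{theorem:casepsneqN} (Fubini with $\rchi_{A_\varepsilon(x)}(y)=\rchi_{A_\varepsilon(y)}(x)$) yields
\[
	\int_{\mathbb{R}^N}J_\varepsilon v(x)\varphi(x)\,dx=\int_{\mathbb{R}^{2N}}\bigl(1-\rchi_{A_\varepsilon(x)}(y)\bigr)\frac{\Psi_p(v(x)-v(y))(\varphi(x)-\varphi(y))}{|x-y|^{N+sp}}\,dy\,dx ;
\]
since $\Psi_p(v(x)-v(y))(\varphi(x)-\varphi(y))|x-y|^{-(N+sp)}\in L^1(\mathbb{R}^{2N})$ and $\rchi_{A_\varepsilon(x)}(y)\to0$ a.e., the right-hand side converges to the weak-form integral while the left-hand side tends to $0$ by Step~2, so $v$ is a weak solution of \eqref{eq:ps=N}.

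The hard part will be Step~1: tracking the constants and signs through the reduction to a one-dimensional integral and, above all, verifying that the inversion $\rho\mapsto1/\rho$ reproduces exactly the exponent $N-1$ --- this exact matching, which forces the right-hand side to vanish rather than equal some $\mathcal{C}(\beta)$-type constant, is precisely where the hypothesis $ps=N$ enters. Once Step~1 is in place, Step~2 is in fact simpler than its counterpart for $v_\beta$, because the constraint $p=N/s>N$ automatically excludes the delicate borderline regimes (the analogues of Cases~2 and~3 in the proof of Theorem~\ref{theorem:casepsneqN}).
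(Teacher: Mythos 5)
Your proof is correct and reaches the same intermediate formula as the paper's Step~1: after the reduction to a radial integral and the inversion $\rho\mapsto1/\rho$, the bulk integrals cancel exactly (because $ps=N$ makes the power of $\rho$ after inversion coincide with $N-1$), leaving only the boundary-layer integral
\[
J_\varepsilon v(x)=-\,4\pi\alpha_N|x|^{-N}\int_{1-\varepsilon/|x|}^{|x|/(|x|+\varepsilon)}\Psi_p(-\log\rho)\,\rho^{N-1}G(\rho^2)\,d\rho,
\]
which, since $\Psi_p$ is odd, matches \eqref{eq:Jv}. Step~3 is identical to the paper's symmetrization/dominated-convergence argument. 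Where you genuinely depart from the paper --- and in fact simplify it --- is Step~2. The paper establishes $g_\varepsilon\to0$ by splitting into the three regimes $p>N+1$, $p<N+1$, $p=N+1$, the latter two requiring integration by parts and the boundedness of $H'$ near $\rho=1$ (cf.\ \eqref{eq:Hdos}). You instead observe that on the layer $[\,1-\varepsilon/|x|,\,|x|/(|x|+\varepsilon)\,]$ one has $1-\rho\ge\varepsilon/(|x|+\varepsilon)\ge\tfrac12\varepsilon/|x|$ and the layer has length $\varepsilon^2/(|x|(|x|+\varepsilon))\lesssim(\varepsilon/|x|)^2$, so bounding the integrand pointwise by $C\|H\|_{L^\infty}|1-\rho|^{p-N-2}$ (using only \eqref{eq:Hcont}, not the derivative) gives at once
\[
\left|\int_{1-\varepsilon/|x|}^{|x|/(|x|+\varepsilon)}\Psi_p(-\log\rho)\,\rho^{N-1}G(\rho^2)\,d\rho\right|\le C\,(\varepsilon/\delta)^{\min\{2,\;p-N\}},
\]
uniformly on $U$, with exponent $\min\{2,p-N\}>0$ because $ps=N$ and $s<1$ force $p>N$. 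This single estimate covers all three of the paper's cases, replaces the paper's $L^1_{\mathrm{loc}}$ convergence by a stronger local $L^\infty$ convergence, and dispenses with the integration-by-parts and $H'$ machinery. The argument is sound as written; the only minor remark is that the pointwise bound $|\Psi_p(-\log\rho)|\le C|1-\rho|^{p-1}$ holds only for $\rho$ near $1$, which suffices since the integration domain collapses to $\{1\}$ as $\varepsilon\to0^+$, a point you already note.
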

	{\it Ideas of the proof.} 	
	Given $\varepsilon>0,$  we define 
	\[
		J_\varepsilon v(x)\coloneqq 2\int\displaylimits_{\mathbb{R}^N\setminus A_\varepsilon(x)}
		\dfrac{\Psi_p(v(x)-v(y))}{|x-y|^{N+sp}} dy.
	\]
	First, we will show that 
	\[
		J_\varepsilon v(x)\to 0 \text{ strongly in } L^1_{\text{loc}}(\Omega)
	\]
	as $\varepsilon\to 0^+.$ Then, arguing as in  step 4 of the proof of 
	Theorem \ref{theorem:casepsneqN}, we conclude that $v$ is a  weak solution of \eqref{eq:ps=N}.
	
	\begin{proof}[Proof of Theorem \ref{theorem:caseps=N}]
		Throughout this proof, we assume that $ps=N,$ 
		$\varepsilon>0$ and $x\in\mathbb{R^N}\setminus\{0\}.$
		
		Let us begin by observing that
		\begin{align*}
			J_\varepsilon v(x)
				&= 2\int\displaylimits_{\mathbb{R}^N\setminus A_\varepsilon(x)}
				\dfrac{\Psi_p(v(x)-v(y))}{|x-y|^{N+sp}} dy\\
				&=-\dfrac{2}{|x|^N}\int\displaylimits_{\mathbb{R}^N\setminus A_\varepsilon(x)}
				\dfrac{\left|\log\left(\frac{|y|}{|x|}\right)\right|^{p-2}\log\left(\frac{|y|}{|x|}\right)}
				{\left|\frac{x}{|x|}-\frac{y}{|x|}\right|^{2N}} \frac{dy}{|x|^N}.
		\end{align*}
		Now, proceeding as in step 1 of the proof of Theorem \ref{theorem:casepsneqN}
		\[
			J_\varepsilon v(x)
			=-\dfrac{4\alpha_N}{|x|^{N}}
			\int\displaylimits_{|1-\rho|\ge\frac{\varepsilon}{|x|}}
			\Psi_p(\log(\rho))\rho^{N-1}\mathcal{K}(\rho)\, d\rho,
		\]
		where $\mathcal{K}(\rho)$ is defined by \eqref{krodef}.
		
		Then
		\begin{align*}
			J_\varepsilon v(x)=&-\dfrac{4\alpha_N}{|x|^{N}}
			\left\{\int\displaylimits_{0}^{1-\frac{\varepsilon}{|x|}}
			\Psi_p(\log(\rho))\rho^{N-1}G(\rho^2,N,N)\, d\rho\right.\\
			&\quad+\left.\int\displaylimits_{1+\frac{\varepsilon}{|x|}}^{\infty}
			\Psi_p(\log(\rho))\rho^{-N-1}G(\rho^{-2},N,N)\, d\rho\right\}\\
			=&-\dfrac{4\alpha_N}{|x|^{N}}
			\left\{\int\displaylimits_{0}^{1-\frac{\varepsilon}{|x|}}
			\Psi_p(\log(\rho))\rho^{N-1}G(\rho^2,N,N)\, d\rho\right.\\
			&\quad-\left.\int\displaylimits_{0}^{\frac{|x|}{|x|+\varepsilon}}
			\Psi_p(\log(\rho))\rho^{N-1}G(\rho^{2},N,N)\, d\rho\right\}\\
			=&\dfrac{4\alpha_N}{|x|^{N}}
			\int\displaylimits_{1-\frac{\varepsilon}{|x|}}^{\frac{|x|}{|x|+\varepsilon}}
			\Psi_p(\log(\rho))\rho^{N-1}G(\rho^2,N,N)\, d\rho.
		\end{align*}
		That is,
		\begin{equation}\label{eq:Jv}
			J_\varepsilon v(x)=g_\varepsilon(x) \text{ in } \mathbb{R}^N\setminus\{0\}
		\end{equation}
		where
		\[
			g_\varepsilon(x)=\dfrac{4\alpha_N}{|x|^{N}}
			\int\displaylimits_{1-\frac{\varepsilon}{|x|}}^{\frac{|x|}{|x|+\varepsilon}}
			\Psi_p(\log(\rho))\rho^{N-1}G(\rho^2,N,N)\, d\rho.
		\]
		
		We claim 
		\begin{equation}\label{eq:claim}
			g_\varepsilon \to 0 \text{ strongly in } L^1_{\text{loc}}(\mathbb{R}^{N}\setminus\{0\}),
		\end{equation}
		 as $\varepsilon\to 0^+.$
		
		To see this, notice that
		\begin{equation}\label{eq:ge}
			\begin{aligned}
				|g_\varepsilon (x)|\le&
				\dfrac{4\alpha_N}{|x|(|x|+\varepsilon)^{N-1}}
				\int\displaylimits_{1-\frac{\varepsilon}{|x|}}^{\frac{|x|}{|x|+\varepsilon}}
				\dfrac{|\log(\rho)|^{p-1}}{(1-\rho)^{N+1}}(1-\rho)^{N+1}G(\rho^2,N,N)\, d\rho\\
				\le&
				4\alpha_N\dfrac{|x|^{p-2}}{(|x|+\varepsilon)^{N-1}(|x|-\varepsilon)^{p-1}}
				\int\displaylimits_{1-\frac{\varepsilon}{|x|}}^{\frac{|x|}{|x|+\varepsilon}}
				\dfrac{(1-\rho)^{N+1}G(\rho^2,N,N)}{(1-\rho)^{N-p+2}}\, d\rho.
			\end{aligned}
		\end{equation}
		On the other hand, by \cite[pages 257 (5) and 271 (2)]{MR0501762}, we have that 
			$H(\rho)=(1-\rho)^{N+1}G(\rho^2,N,N)$ is differentiable and
			\begin{equation}\label{eq:Hdos}
				\lim_{\rho\to {1}^-}H(\rho) \text{ and } \lim_{\rho\to {1}^-}H^\prime(\rho),
			\end{equation}
			exist.
			
		As in step 3 of the proof of Theorem \ref{theorem:casepsneqN}, we consider three cases
		to prove our claim.
			
		\noindent {\it Case 1:} We start assuming that $p>N+1.$
			
			In this case, owing to \eqref{eq:ge} and \eqref{eq:Hdos}, 
			it is easy to check that there is a positive 
			constant $C$ (independent of $\varepsilon$)
			\begin{align*}
				|g_\varepsilon(x)|\le&
				4\alpha_N\frac{4\alpha_N|x|^{p-2}}{(|x|+\varepsilon)^{N-1}(|x|-\varepsilon)^{p-1}}
				\int\displaylimits_{1-\frac{\varepsilon}{|x|}}^{\frac{|x|}{|x|+\varepsilon}}
				\dfrac{H(\rho)}{(1-\rho)^{N-p+2}}\, d\rho\\
				\le& \frac{4\alpha_N|x|^{p-2}}{(|x|+\varepsilon)^{N-1}(|x|-\varepsilon)^{p-1}}
				\left(\frac{1}{|x|^{p-N-1}}-\frac{1}{(|x|+\varepsilon)^{p-N-1}}\right)
				\varepsilon^{p-N-1}.
			\end{align*}
			This implies \eqref{eq:claim}.
			
			\medskip
		
		\noindent {\it Case 2:} We now study the case $p<N+1.$
			
			 Due to \eqref{eq:Hdos}, we have that
			\begin{align*}
				&\int\displaylimits_{1-\frac{\varepsilon}{|x|}}^{\frac{|x|}{|x|+\varepsilon}}
		 		\dfrac{H(\rho)}{|1-\rho|^{N-p+2}}d\rho 
		 		=\\
		 		&=\dfrac1{N-p+1}\left\{
		 		\dfrac{ H\left(\dfrac{|x|}{|x|+\varepsilon}\right)}
		 		{\left(\dfrac{\varepsilon}{|x|+\varepsilon}\right)^{N-p+1}}
		 		-
		 		\dfrac{ H\left(1-\dfrac{\varepsilon}{|x|}\right)}
		 		{\left(\dfrac{\varepsilon}{|x|}\right)^{N-p+1}}
		 		-
		 		\int\displaylimits_{1-\frac{\varepsilon}{|x|}}^{\frac{|x|}{|x|+\varepsilon}}
		 		\dfrac{H^\prime(\rho)}{|1-\rho|^{N-p+1}}d\rho\right\}\\
		 		&\le
		 		\dfrac{\|H^\prime\|_{{L\!\prescript{\infty}{}(0,1)}}}{N-p+1}
		 		\left\{
		 			\dfrac{\varepsilon}{(|x|+\varepsilon)^{p-N}|x|}
		 			+\dfrac1{p-N}
		 				\left(
		 					\dfrac{1}{|x|^{p-N}}-
		 					\dfrac{1}{(|x|+\varepsilon)^{p-N}}
		 				\right)
		 		\right\}\varepsilon^{p-N}\\
		 		&\qquad+\dfrac{\|H\|_{{L\!\prescript{\infty}{}(0,1)}}}{|x|^{p-N}}\varepsilon^{p-N}.
			\end{align*}
			This implies, using again \eqref{eq:ge},
			\begin{align*}
				&|g_\varepsilon (x)|\le\\
				&C\varepsilon^{p-N}\dfrac{|x|^{p-2}}{(|x|+\varepsilon)^{N-1}(|x|-\varepsilon)^{p-1}}
				\left\{
		 			\dfrac{\varepsilon}{(|x|+\varepsilon)^{p-N}|x|}
		 			+\dfrac{1}{|x|^{p-N}}-
		 					\dfrac{1}{(|x|+\varepsilon)^{p-N}}
		 		\right\}
			\end{align*}
			where $C$ is a positive constant independent of $\varepsilon.$
			Now it is easy to check \eqref{eq:claim}.
			
			\medskip
	
		\noindent {\it Case 3:} To conclude the proof of our claim, we study the case $p=N+1.$
		
			Again by \eqref{eq:Hdos}, we have that	
			\begin{align*}
				&\int\displaylimits_{1-\frac{\varepsilon}{|x|}}^{\frac{|x|}{|x|+\varepsilon}}
		 		\dfrac{H(\rho)}{|1-\rho|^{N-p+2}}d\rho 
		 		=\int\displaylimits_{1-\frac{\varepsilon}{|x|}}^{\frac{|x|}{|x|+\varepsilon}}
		 		\dfrac{H(\rho)}{|1-\rho|}d\rho= \\
		 		&
		 		 H\left(\dfrac{|x|}{|x|+\varepsilon}\right)
		 		\log\left(\dfrac{\varepsilon}{|x|+\varepsilon}\right)
		 		-
		 		{ H\left(1-\dfrac{\varepsilon}{|x|}\right)}
		 		{\log\left(\dfrac{\varepsilon}{|x|}\right)}
		 		-
		 		\int\displaylimits_{1-\frac{\varepsilon}{|x|}}^{\frac{|x|}{|x|+\varepsilon}}
		 		{H^\prime(\rho)}{\log(1-\rho)}d\rho\\
		 		&\le
		 		{\|H^\prime\|_{{L\!\prescript{\infty}{}(0,1)}}}
		 		\left\{
		 			-\dfrac{\varepsilon\log\left(\frac{\varepsilon}{|x|+\varepsilon}\right)}
		 			{(|x|+\varepsilon)|x|}		
		 			-
		 				\left(
		 					\dfrac{\varepsilon\left(\log(\varepsilon)-1\right)}{|x|(|x|+\varepsilon)}
		 					+\dfrac{\log(|x|+\varepsilon)}{(|x|+\varepsilon)}
		 					-\dfrac{\log(|x|)}{|x|}
		 				\right)
		 		\right\}\varepsilon\\
		 		&\qquad+\dfrac{\|H\|_{{L\!\prescript{\infty}{}(0,1)}}}{|x|}\varepsilon.
			\end{align*}
			This implies, using again \eqref{eq:ge},
			\begin{align*}
				&|g_\varepsilon (x)|\le\\
				&C\varepsilon |x|^{N-2}
				\frac{
		 			-2\varepsilon\log\left(\varepsilon\right)
		 			+2\varepsilon+
		 					(\varepsilon-|x|)\log(|x|+\varepsilon)
		 					+(|x|+\varepsilon)\log(|x|)+|x|}
		 		{(|x|+\varepsilon)^{N}(|x|-\varepsilon)^{N}}
			\end{align*}
			where $C$ is a positive constant independent of $\varepsilon.$
			This implies \eqref{eq:claim}.
			
		\medskip
		
		Finally, we prove that $v(x)=\log(|x|),$ is a weak solution of \eqref{eq:ps=N}.
		By \eqref{eq:Jv}, we have that
			\[
				J_\varepsilon v(x)=g_{\varepsilon}(x)\text{ in }\mathbb{R}^N\setminus\{0\}.
	 		\]
	 		Then, given $\varphi\in C^{\infty}_c(\mathbb{R}^{N}\setminus\{0\})$ we have that
	 		\[
	 			\int\displaylimits_{\mathbb{R}^N} J_\varepsilon v(x) \varphi(x) dx =
	 			\int\displaylimits_{\mathbb{R}^N}g_{\varepsilon}(x)\varphi(x)\,dx
	 		\]
	 		that is
	 		\begin{equation}\label{eq:ps=N_1}
	 				\int\displaylimits_{\mathbb{R}^{2N}}
						(1-\rchi_{A_\varepsilon(x)}(y))
						\dfrac{\Psi_p(v(x)-v(y))}{|x-y|^{N+sp}}
							\varphi(x) dy dx
							=\frac12
                            \int\displaylimits_{\mathbb{R}^N}
	 						g_{\varepsilon}(x)\varphi(x)dx.
	 			\end{equation}
	 		Interchanging the roles of $x$ and $y$
	 			\begin{equation}\label{eq:ps=N_2}
	 					\int\displaylimits_{\mathbb{R}^{2N}}
						(1-\rchi_{A_\varepsilon(y)}(x))
						\dfrac{\Psi_p(v(y)-v(x))}{|x-y|^{N+sp}}
							\varphi(y) dx dy=\frac12\int\displaylimits_{\mathbb{R}^N}
	 						g_{\varepsilon}(x)\varphi(x)\,dx
	 			\end{equation}
	 			
	 		Now, adding \eqref{eq:ps=N_1} and \eqref{eq:ps=N_2} and using that
	 			$1-\rchi_{A_\varepsilon(y)}(x)=1-\rchi_{A_\varepsilon(x)}(y)$, we get
	 			\begin{equation}\label{eq:ps=N_3}
						\int\displaylimits_{\mathbb{R}^N}
								g_{\varepsilon}(x)\varphi(x)\,dx=
	 						\int\displaylimits_{\mathbb{R}^{2N}}
							(1-\rchi_{A_\varepsilon(x)}(y))
							\dfrac{\Psi_p(v(x)-v(y))(\varphi(x)-\varphi(y))}{|x-y|^{N+sp}}
							 dy dx.
	 			\end{equation}
	 			
	 		By \eqref{eq:claim} we have that
	 			\begin{equation}\label{eq:ps=N_4}
	 				\lim_{\varepsilon\to 0^+}\int\displaylimits_{\mathbb{R}^N}
	 						g_{\varepsilon}(x)\varphi(x)\,dx
	 						=0.
	 			\end{equation}
				
			On the other hand, we have that
				\[
					\dfrac{\Psi_p(v(x)-v(y))(\varphi(x)-\varphi(y))}{|x-y|^{N+sp}}
						\in L^{1}(\mathbb{R}^{2N})
				\]
			and 
				\[
					\rchi_{A_\varepsilon(x)}(y)\to 0
					\text{ a.e. in }\mathbb{R}^{2N} 
					\text{ as }\varepsilon \to 0^+.  
				\]

			Then
	 		\begin{equation}\label{eq:ps=N_5}
	 			\begin{aligned}
	 				\lim_{\varepsilon\to 0^+}
							\int\displaylimits_{\mathbb{R}^{2N}}&
						(1-\rchi_{A_\varepsilon(x)}(y))
						\dfrac{\Psi_p(v(x)-v(y))(\varphi(x)-\varphi(y))}{|x-y|^{N+sp}}
							 dy dx\\
	 					&=\int\displaylimits_{\mathbb{R}^{2N}}
						\dfrac{\Psi_p(v(x)-v(y))(\varphi(x)-\varphi(y))}{|x-y|^{N+sp}}
							 dy dx.\\			
	 			\end{aligned}
	 		\end{equation}
	 			
	 		Since $\varphi$ is arbitrary, by \eqref{eq:ps=N_3}, \eqref{eq:ps=N_4} and \eqref{eq:ps=N_5},
	 		we conclude that $v_\beta$ is a weak solution of \eqref{eq:psneqN}.
	\end{proof}

\section{Hadamard properties}\label{hp}
	To prove our Hadamard properties, we use comparison techniques that require modifying the 
	fundamental solution near the origin to put it below a 
	weak fractional superharmonic function near the origin. 	

\subsection{Two fractional subharmonic functions}
	We start by building  two weak
	fractional subharmonic functions from the fundamental solution.
	
	\medskip
	
	Throughout this section $N\ge2,0<s<1,$  $1<p<\infty,$ $N>ps,$ 
	\mbox{$\beta\in\left(-\tfrac{N}{p-1},\tfrac{ps-N}{p-1}\right)$} and
	$0<\varepsilon<1<r<R.$ Now, define
	\[
		B_\rho\coloneqq B_\rho(0) \quad(\rho>0), \quad 
		A_{r,R}\coloneqq\left\{x\in\mathbb{R}^N\colon r<|x|<R\right\},
	\]
	and
	\[	
		\phi_\varepsilon(x)\coloneqq
		\begin{cases}
			\varepsilon^\beta&\text{if } 0\le |x|<\varepsilon,\\
			|x|^\beta &\text{if } \varepsilon\le |x|.
		\end{cases}	
	\]
	
	\begin{lemma}\label{lemma:aux1}
		There exists $\varepsilon_0\in(0,1)$ independent of $R$ such that for any $\varepsilon\in(0,\varepsilon_0),$
		$\phi_\varepsilon$ is a weak solution of 
		\[
			(-\Delta_p)^s \phi_\varepsilon(x)\le 0\text{ in } A_{r,R}.
		\]
	\end{lemma}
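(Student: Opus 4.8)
The plan is to compare $\phi_\varepsilon$ with the fundamental solution $v_\beta(x)=|x|^\beta$, using that $\phi_\varepsilon\equiv v_\beta$ on $\mathbb{R}^N\setminus B_\varepsilon$ while $0<\phi_\varepsilon\le v_\beta$ on all of $\mathbb{R}^N$ (the latter because $\beta<\tfrac{ps-N}{p-1}<0$ as $N>ps$, so $\varepsilon^\beta\le|x|^\beta$ whenever $|x|\le\varepsilon$). First I would record two facts: since $\beta<\tfrac{ps-N}{p-1}$, formula \eqref{signodecbeta} gives $\mathcal{C}(\beta)<0$, and I set $c_0:=-\mathcal{C}(\beta)>0$; and $\phi_\varepsilon\in L^{p-1}_s(\mathbb{R}^N)$ (being bounded near the origin and dominated by $v_\beta$ at infinity), while $\phi_\varepsilon\equiv|x|^\beta$ is $C^\infty$ on $A_{r,R}$, so $\phi_\varepsilon\in W^{s,p}_{\mathrm{loc}}(A_{r,R})$. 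Hence it remains only to verify the weak inequality against test functions.

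Fix a non-negative $\varphi\in C^\infty_c(A_{r,R})$ and write $\mathcal{E}(u,\varphi):=\int_{\mathbb{R}^{2N}}\frac{\Psi_p(u(x)-u(y))(\varphi(x)-\varphi(y))}{|x-y|^{N+sp}}\,dx\,dy$. I would first note that $\mathcal{E}(\phi_\varepsilon,\varphi)$ and $\mathcal{E}(v_\beta,\varphi)$ are both absolutely convergent: near the diagonal the integrand is $O(|x-y|^{p-N-sp})$ (as $\varphi$ is Lipschitz and $p(1-s)>0$), the only other possible singularity, coming from the blow-up of $v_\beta$ at $0$, is an $O(|x|^{\beta(p-1)})$ factor that is integrable because $\beta(p-1)>-N$, and $\varphi$ is supported away from $0$ and $\infty$. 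Then split $\mathcal{E}(\phi_\varepsilon,\varphi)=\mathcal{E}(v_\beta,\varphi)+R_\varepsilon(\varphi)$, where $R_\varepsilon(\varphi)$ is the integral of the difference of the two integrands. By Theorem \ref{theorem:casepsneqN}, $\mathcal{E}(v_\beta,\varphi)=\mathcal{C}(\beta)\int_{\mathbb{R}^N}|x|^{\beta(p-1)-sp}\varphi\,dx=-c_0\int_{\mathbb{R}^N}|x|^{\beta(p-1)-sp}\varphi\,dx\le 0$. The integrand of $R_\varepsilon$ vanishes off $(B_\varepsilon\times\mathbb{R}^N)\cup(\mathbb{R}^N\times B_\varepsilon)$ (there $\phi_\varepsilon$ and $v_\beta$ agree in both variables, so the $\Psi_p$-difference is $0$) and also on $B_\varepsilon\times B_\varepsilon$ (there $\varphi(x)-\varphi(y)=0$ since $B_\varepsilon\cap\operatorname{supp}\varphi=\emptyset$); thus only the part with exactly one variable in $B_\varepsilon$ survives, and on it the other variable must lie in $\operatorname{supp}\varphi$. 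By the $x\leftrightarrow y$ symmetry of $\mathcal{E}$ and $\phi_\varepsilon(x)=v_\beta(x)=|x|^\beta$ for $x\in\operatorname{supp}\varphi$, $\;R_\varepsilon(\varphi)=2\int_{\operatorname{supp}\varphi}\varphi(x)\bigl(\int_{B_\varepsilon}\tfrac{\Psi_p(|x|^\beta-\varepsilon^\beta)-\Psi_p(|x|^\beta-|y|^\beta)}{|x-y|^{N+sp}}\,dy\bigr)\,dx$.

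It then remains to estimate the inner integral and check it is dominated by $c_0|x|^{\beta(p-1)-sp}$ once $\varepsilon$ is small. For $x\in\operatorname{supp}\varphi$ (so $|x|>r>1$) and $y\in B_\varepsilon$ one has $|x|^\beta<1<\varepsilon^\beta\le|y|^\beta$, hence, since $\Psi_p$ is increasing and $p-1>0$, $\;0\le\Psi_p(|x|^\beta-\varepsilon^\beta)-\Psi_p(|x|^\beta-|y|^\beta)=(|y|^\beta-|x|^\beta)^{p-1}-(\varepsilon^\beta-|x|^\beta)^{p-1}\le(|y|^\beta)^{p-1}=|y|^{\beta(p-1)}$. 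Also $|x-y|\ge|x|-\varepsilon\ge|x|/2$ as soon as $\varepsilon\le\tfrac12$, so $\int_{B_\varepsilon}\tfrac{|y|^{\beta(p-1)}}{|x-y|^{N+sp}}\,dy\le 2^{N+sp}|x|^{-(N+sp)}\int_{B_\varepsilon}|y|^{\beta(p-1)}\,dy=C_N\,|x|^{-(N+sp)}\,\varepsilon^{\beta(p-1)+N}$, the last integral being finite precisely because $\beta(p-1)+N>0$ (i.e.\ $\beta>-\tfrac{N}{p-1}$), with $C_N:=\tfrac{2^{N+sp}|\mathbb{S}^{N-1}|}{\beta(p-1)+N}$. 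The point that makes $\varepsilon_0$ independent of $R$ is the exponent bookkeeping $|x|^{-(N+sp)}=|x|^{\beta(p-1)-sp}\,|x|^{-(\beta(p-1)+N)}\le|x|^{\beta(p-1)-sp}$ for $|x|>1$; plugging this in gives $R_\varepsilon(\varphi)\le 2C_N\varepsilon^{\beta(p-1)+N}\int_{\mathbb{R}^N}|x|^{\beta(p-1)-sp}\varphi\,dx$, so $\mathcal{E}(\phi_\varepsilon,\varphi)\le\bigl(2C_N\varepsilon^{\beta(p-1)+N}-c_0\bigr)\int_{\mathbb{R}^N}|x|^{\beta(p-1)-sp}\varphi\,dx\le 0$ whenever $2C_N\varepsilon^{\beta(p-1)+N}\le c_0$. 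Thus $\varepsilon_0:=\min\{\tfrac12,(c_0/2C_N)^{1/(\beta(p-1)+N)}\}$ works and depends only on $N,s,p,\beta$, not on $r$ or $R$; equivalently this shows $(-\Delta_p)^s\phi_\varepsilon(x)=\mathcal{C}(\beta)|x|^{\beta(p-1)-sp}+2\int_{B_\varepsilon}\tfrac{\Psi_p(|x|^\beta-\varepsilon^\beta)-\Psi_p(|x|^\beta-|y|^\beta)}{|x-y|^{N+sp}}\,dy\le 0$ pointwise on $A_{r,R}$. The only genuinely delicate points are the accounting of which part of the double integral $R_\varepsilon$ survives and the scaling comparison of $|x|^{-(N+sp)}$ with $|x|^{\beta(p-1)-sp}$ uniformly for $|x|>1$; the rest is routine.
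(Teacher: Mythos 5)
Your proof is correct and takes essentially the same approach as the paper: decompose $\mathcal{E}(\phi_\varepsilon,\varphi)=\mathcal{E}(v_\beta,\varphi)+R_\varepsilon(\varphi)$, invoke Theorem~\ref{theorem:casepsneqN} together with $\mathcal{C}(\beta)<0$ from \eqref{signodecbeta}, and bound the remainder by $O\bigl(\varepsilon^{\beta(p-1)+N}\bigr)\int|x|^{\beta(p-1)-sp}\varphi\,dx$. Your direct kernel estimate $|x-y|\ge|x|/2$ replaces the paper's change of variables $z=y/|x|$ and rotation argument, and as a small bonus yields an $\varepsilon_0$ that is independent of $r$ as well as of $R$.
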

	\begin{proof}
		Let $\varphi\in C_c^\infty(A_{r,R})$ be non-negative.Then
		\begin{equation}\label{eq:l1_1}		
			\int\displaylimits_{\mathbb{R}^{2N}}
			\frac{\Psi_p(\phi_{\varepsilon}(x)-\phi_{\varepsilon}(y))(\varphi(x)-\varphi(y))}
			{|x-y|^{N+sp}} dxdy=I_1+I_2,
		\end{equation}
		where
		\begin{align*}
			I_1&=2\int\displaylimits_{B_\varepsilon}\int\displaylimits_{\mathbb{R}^N\setminus B_\varepsilon}
			\frac{\Psi_p(\phi_{\varepsilon}(x)-\phi_{\varepsilon}(y))(\varphi(x)-\varphi(y))}{|x-y|^{N+sp}} dxdy\\
			&=-2\int\displaylimits_{\mathbb{R}^N\setminus B_r}
			\int\displaylimits_{B_\varepsilon} \frac{|\varepsilon^\beta-|x|^\beta|^{p-1}}{|x-y|^{N+sp}} dy 
			\,\varphi(x) dx
		\end{align*}
		and
		\begin{align*}
			I_2&=\int\displaylimits_{(\mathbb{R}^N\setminus B_\varepsilon)^2}
			\frac{\Psi_p(\phi_{\varepsilon}(x)-\phi_{\varepsilon}(y))(\varphi(x)-\varphi(y))}{|x-y|^{N+sp}} dxdy\\
			&=
			\int
			\displaylimits_{(\mathbb{R}^N\setminus B_\varepsilon)^2}
			\frac{\Psi_p(|x|^\beta-|y|^\beta)
			(\varphi(x)-\varphi(y))}{|x-y|^{N+sp}} dxdy\\
			&=\int\displaylimits_{\mathbb{R}^{2N}}			
			\frac{\Psi_p(|x|^\beta-|y|^\beta)(\varphi(x)-\varphi(y))}{|x-y|^{N+sp}} dxdy\\
			&\qquad+ 2\int\displaylimits_{\mathbb{R}^N\setminus B_r}
			\int\displaylimits_{B_\varepsilon} \frac{||y|^\beta-|x|^\beta|^{p-1}}{|x-y|^{N+sp}} dy\, \varphi(x) dx.
		\end{align*}
		
		By Theorem \ref{theorem:casepsneqN}, we have that
		\[
			I_2=\mathcal{C}(\beta)\int\displaylimits_{\mathbb{R}^N}|x|^{\beta(p-1)-sp}\varphi(x) dx +2\int\displaylimits_{\mathbb{R}^N\setminus B_r}
			\int\displaylimits_{B_\varepsilon} \frac{||y|^\beta-|x|^\beta|^{p-1}}{|x-y|^{N+sp}} dy\, \varphi(x) dx.
		\]
		Then
		\begin{equation}\label{eq:l1_2}
				I_1+I_2=\mathcal{C}(\beta)\int\displaylimits_{\mathbb{R}^N}|x|^{\beta(p-1)-sp}\varphi(x) dx +2 
				\int\displaylimits_{\mathbb{R}^N\setminus B_r} F(x)\varphi(x) dx,
		\end{equation}
		where
		\[
			 F(x)\coloneqq
			\int\displaylimits_{B_\varepsilon} \frac{||y|^\beta-|x|^\beta|^{p-1}-
			|\varepsilon^\beta-|x|^\beta|^{p-1}}{|x-y|^{N+sp}} dy=\int\displaylimits_{B_\varepsilon} 
			k(x,\tfrac{y}{|x|}) dy\,|x|^{\beta(p-1)-N-sp}
		\]
		with
		\[
			k(x,z)\coloneqq \frac{\left|z^{\beta}-1\right|^{p-1}-
			\left|\left(\tfrac{\varepsilon}{|x|}\right)^\beta-1\right|^{p-1}}{\left|\tfrac{x}{|x|}-z
			\right|^{N+sp}}. 
		\]
		Making the change of variables $z=\frac{y}{|x|},$ we have that
		\begin{equation}\label{eq:l1_3}
	     F(x)=\int\displaylimits_{B_\varepsilon} k\left(x,\tfrac{y}{|x|}\right) dy\,|x|^{\beta(p-1)-N-sp}
			=\int\displaylimits_{B_{\frac{\varepsilon}{|x|}}} k(x,z) dz\,|x|^{\beta(p-1)-sp}.
		\end{equation}
		
		On the other hand, since $|x|>r$ and $\beta<0,$ we have that 
		$\left(\tfrac{\varepsilon}{|x|}\right)^\beta\ge \left(\tfrac{\varepsilon}{r}\right)^\beta,$ and
		therefore 
		\[
			k(x,z)\le \frac{\left||z|^{\beta}-1\right|^{p-1}-
			\left|\left(\tfrac{\varepsilon}{r}\right)^\beta-1\right|^{p-1}}{\left|\tfrac{x}{|x|}-z
			\right|^{N+sp}}. 
		\]
		Thus, by a simple change of variable and by rotation invariance, we have that
		\begin{equation}\label{eq:l1_4}
			0\le \int\displaylimits_{B_{\frac{\varepsilon}{|x|}}} k(x,z) dz\le
			\int\displaylimits_{B_{\frac{\varepsilon}{|x|}}} \frac{\left||z|^{\beta}-1\right|^{p-1}-
			\left|\left(\tfrac{\varepsilon}{r}\right)^\beta-1\right|^{p-1}}{\left|e_1-z
			\right|^{N+sp}} dz.
		\end{equation}
		
		Now, since $|e_1-z|\ge 1-\tfrac{\varepsilon}{r}$ for any $z\in B_{\frac{\varepsilon}{|x|}},$
		$p>1,$ $\beta\in\left(-\tfrac{N}{p-1},\tfrac{ps-N}{p-1}\right),$ we
		get
		\begin{equation}\label{eq:l1_5}
			\begin{aligned}
				0&\le
				\int\displaylimits_{B_{\frac{\varepsilon}{|x|}}} \frac{\left||z|^{\beta}-1\right|^{p-1}-
				\left|\left(\tfrac{\varepsilon}{r}\right)^\beta-1\right|^{p-1}}{\left|e_1-z
				\right|^{N+sp}} dz\\
				&\le\frac{1}{\left(1-\tfrac{\varepsilon}{r}\right)^{N+sp}}
				\int\displaylimits_{B_{\frac{\varepsilon}{|x|}}}|z|^{\beta(p-1)} dz=
				\frac{\alpha_N}{\left(1-\tfrac{\varepsilon}{r}\right)^{N+sp}}
				\left(\tfrac{\varepsilon}{r}\right)^{\beta(p-1)+N}.
			\end{aligned}
		\end{equation}
		Then, by \eqref{eq:l1_3}, \eqref{eq:l1_4},
		and \eqref{eq:l1_5}, we have that
		\begin{equation}\label{eq:l1_6}	
			0\le F(x)\le \frac{\alpha_N}{\left(1-\tfrac{\varepsilon}{r}\right)^{N+sp}}
				\left(\tfrac{\varepsilon}{r}\right)^{\beta(p-1)+N}|x|^{\beta(p-1)-sp},
		\end{equation}
		for any $x\in\mathbb{R}^N\setminus B_r.$
		
		By \eqref{eq:l1_1}, \eqref{eq:l1_2}, and \eqref{eq:l1_6},  
		we obtain
		\begin{align*}
			&\int\displaylimits_{\mathbb{R}^{2N}}
			\frac{\Psi_p
			(\phi_{\varepsilon}(x)-\phi_{\varepsilon}(y))(\varphi(x)-\varphi(y))}{|x-y|^{N+sp}} dxdy\le\\
			&\qquad\le\left(\mathcal{C}(\beta)+
					\frac{2\alpha_N}{\left(1-\tfrac{\varepsilon}{r}\right)^{N+sp}}
				\left(\tfrac{\varepsilon}{r}\right)^{\beta(p-1)+N}\right)
				\int\displaylimits_{\mathbb{R}^N\setminus B_r} |x|^{\beta(p-1)-sp}\varphi(x) dx.
		\end{align*}
		Thus, by \eqref{signodecbeta}, we get
		\[
			\int\displaylimits_{\mathbb{R}^{2N}}
			\frac{\Psi_p
			(\phi_{\varepsilon}(x)-\phi_{\varepsilon}(y))(\varphi(x)-\varphi(y))}{|x-y|^{N+sp}} dxdy
			\le 0
		\]
		for any $\varepsilon$ small enough.
	\end{proof}

	\begin{remark}
		It is worth noting that although Lemma \ref{lemma:aux1} establishes that 
		$\varphi_\varepsilon(x) = \max\{|x|^\beta, \varepsilon^\beta\}$ is $(-\Delta_p)^s$ 
		non-positive in the \emph{weak} sense, a similar conclusion holds in the viscosity sense by standard arguments. Specifically, since both $|x|^\beta$ and 
		$\varepsilon^\beta$ are $(-\Delta_p)^s$ non-positive in the viscosity sense, 
		$\varphi_\epsilon$ inherits this property. Any viscosity test function of 
		$\varphi_\epsilon$ can be viewed as a test function for either $|x|^\beta$ or $\epsilon^\beta$, ensuring the desired inequality. This connection helps 
		bridge the weak and viscosity frameworks and aligns with intuition from viscosity theory.
	\end{remark}

	Before showing our next result, we define
	\[
		\mathcal{A}_r\coloneqq A_{\tfrac{r}2,2r}, \quad
		r_\varepsilon\coloneqq r\left(\frac{\varepsilon}{1+\varepsilon 2^\beta}\right)^{-\tfrac1\beta},
	\]
	and
	\[
		\psi_\varepsilon(x)\coloneqq\begin{cases}
						r_\varepsilon^\beta&\text{if } 0\le |x|\le r_{\varepsilon},\\
						|x|^\beta &\text{if }  r_{\varepsilon}<|x|\le 2r,\\
						(2r)^\beta &\text{if }  2r<|x|.
					\end{cases}
	\]
	Observe that $r_\varepsilon<\tfrac{r}2$ and $r_\varepsilon\to 0^+$ as $\varepsilon\to 0^+.$
	
	\begin{lemma}\label{lemma:aux2}
		There is $\varepsilon_0\in(0,1)$ independent of $r$ 
		such that for any $\varepsilon\in(0,\varepsilon_0),$
		$\psi_\varepsilon$ is a weak solution of 
		\[
			(-\Delta_p)^s \psi_\varepsilon(x)\le 0\text{ in } \mathcal{A}_{r}.
		\]
	\end{lemma}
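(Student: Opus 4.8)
The plan is to proceed as in Lemma~\ref{lemma:aux1}, comparing $\psi_\varepsilon$ with the fundamental solution $v_\beta(x)=|x|^\beta$ and using that, by Theorem~\ref{theorem:casepsneqN}, the bilinear form tested against $v_\beta$ already carries the right sign: since $N>ps$ and $\beta<\tfrac{ps-N}{p-1}$, formula \eqref{signodecbeta} gives $\mathcal{C}(\beta)<0$. Fix a non-negative $\varphi\in C_c^\infty(\mathcal{A}_r)$. Because $r_\varepsilon<\tfrac r2$, we have $\psi_\varepsilon\equiv v_\beta$ on $B_{2r}\setminus\overline{B_{r_\varepsilon}}\supset\operatorname{supp}\varphi$; moreover $\psi_\varepsilon$ is bounded (hence $\psi_\varepsilon\in L^{p-1}_s(\mathbb{R}^N)$) and smooth on $\mathcal{A}_r$, so it is an admissible competitor. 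Writing $\mathcal{E}(u,\varphi)$ for the double integral appearing in the weak formulation, the integrand of $\mathcal{E}(\psi_\varepsilon,\varphi)-\mathcal{E}(v_\beta,\varphi)$ vanishes whenever both variables lie in $B_{2r}\setminus\overline{B_{r_\varepsilon}}$ (the two functions agree there) and whenever both lie outside $\operatorname{supp}\varphi$; by the $x\leftrightarrow y$ symmetry of the integrand and $\operatorname{supp}\varphi\subset B_{2r}\setminus\overline{B_{r_\varepsilon}}$, what survives is twice the contribution with $x\in\operatorname{supp}\varphi$ and $y\in B_{r_\varepsilon}$ or $y\in\mathbb{R}^N\setminus B_{2r}$, where moreover $\varphi(y)=0$. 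Invoking Theorem~\ref{theorem:casepsneqN} for $\mathcal{E}(v_\beta,\varphi)$ we arrive at
\[
\mathcal{E}(\psi_\varepsilon,\varphi)=\int_{\operatorname{supp}\varphi}\varphi(x)\Big(\mathcal{C}(\beta)\,|x|^{\beta(p-1)-sp}+2T_1(x)+2T_2(x)\Big)\,dx,
\]
with $T_1(x)=\displaystyle\int_{B_{r_\varepsilon}}\frac{\Psi_p(|x|^\beta-r_\varepsilon^\beta)-\Psi_p(|x|^\beta-|y|^\beta)}{|x-y|^{N+sp}}\,dy$ and $T_2(x)=\displaystyle\int_{\mathbb{R}^N\setminus B_{2r}}\frac{\Psi_p(|x|^\beta-(2r)^\beta)-\Psi_p(|x|^\beta-|y|^\beta)}{|x-y|^{N+sp}}\,dy$. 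Everything thus reduces to showing that the parenthesis is $\le 0$ on $\operatorname{supp}\varphi$ once $\varepsilon$ is small.

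I would then dispose of $T_2$ by monotonicity: for $x\in\operatorname{supp}\varphi$ (so $\tfrac r2<|x|<2r$) and $|y|>2r$, the sign $\beta<0$ yields $0<|x|^\beta-(2r)^\beta<|x|^\beta-|y|^\beta$, and $\Psi_p$ is strictly increasing on $(0,\infty)$, so the integrand of $T_2$ is negative and $T_2(x)\le 0$; this term may simply be discarded. The same monotonicity gives $T_1(x)\ge 0$, so $T_1$ is the term to be controlled. Once $\varepsilon$ is small enough that $r_\varepsilon<\tfrac r4\le\tfrac{|x|}2$, we have $|x-y|\ge\tfrac{|x|}2$ on $B_{r_\varepsilon}$, while $|\Psi_p(|x|^\beta-r_\varepsilon^\beta)|\le r_\varepsilon^{\beta(p-1)}$ and $|\Psi_p(|x|^\beta-|y|^\beta)|\le|y|^{\beta(p-1)}$; since $\beta(p-1)+N>0$, integrating over $B_{r_\varepsilon}$ gives
\[
0\le T_1(x)\le C\,|x|^{-(N+sp)}\,r_\varepsilon^{\,\beta(p-1)+N},\qquad C=C(N,p,s,\beta).
\]
Writing $|x|^{\beta(p-1)-sp}=|x|^{-(N+sp)}|x|^{\beta(p-1)+N}$, $r_\varepsilon=r\,\mu(\varepsilon)$ with $\mu(\varepsilon)=\big(\tfrac{\varepsilon}{1+\varepsilon 2^{\beta}}\big)^{-1/\beta}\to0^+$, and using $|x|>\tfrac r2$, the wanted inequality $\mathcal{C}(\beta)|x|^{\beta(p-1)+N}+2C\,r_\varepsilon^{\beta(p-1)+N}\le0$ follows as soon as $\mu(\varepsilon)^{\beta(p-1)+N}\le c(N,p,s,\beta)$ for some constant $c>0$; here it is essential that $\mathcal{C}(\beta)\neq0$. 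The factors of $r$ cancel, so this holds for all $\varepsilon$ below a threshold $\varepsilon_0\in(0,1)$ depending only on $N,p,s,\beta$ — exactly the claimed uniformity — and then $\mathcal{E}(\psi_\varepsilon,\varphi)\le0$ for every non-negative $\varphi$, i.e.\ $\psi_\varepsilon$ is a weak solution of $(-\Delta_p)^s\psi_\varepsilon\le0$ in $\mathcal{A}_r$.

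The main obstacle is the outer truncation. Unlike the inner cut near the origin, which produces the correction $T_1$ with the vanishing factor $r_\varepsilon^{\beta(p-1)+N}$, capping $v_\beta$ from above on $\mathbb{R}^N\setminus B_{2r}$ produces a term $T_2$ that does \emph{not} become small as $\varepsilon\to0$, so it cannot be absorbed by smallness against the negative main term $\mathcal{C}(\beta)|x|^{\beta(p-1)-sp}$. The point to notice is that the monotonicity of $\Psi_p$ already gives $T_2$ the favorable sign, so it can be dropped outright; the only genuinely quantitative estimate needed is the one for $T_1$, and there the specific scaling $r_\varepsilon\propto r$ built into the definition of $\psi_\varepsilon$ is precisely what makes the $r$-dependence cancel and keeps $\varepsilon_0$ independent of $r$. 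The remaining points — that $\mathcal{E}(\psi_\varepsilon,\varphi)$ and $\mathcal{E}(v_\beta,\varphi)$ are absolutely convergent and that the double integral splits into the three stated pieces — are routine given $\beta\in(-\tfrac N{p-1},0)$, $p(1-s)>0$, and the fact that $\operatorname{supp}\varphi$ is a compact subset of $\mathcal{A}_r$.
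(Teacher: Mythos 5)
Your proof is correct, and it uses the same decomposition as the paper's: subtract off the contribution of $v_\beta(x)=|x|^\beta$, invoke Theorem~\ref{theorem:casepsneqN}, and isolate the two correction terms coming from the inner truncation at $r_\varepsilon$ (your $T_1$, the paper's $G_2$) and the outer truncation at $2r$ (your $T_2$, the paper's $G_1$). The difference is where the crucial strict negativity comes from. You observe that $T_2\le 0$ by monotonicity of $\Psi_p$ and simply discard it, then absorb the small positive correction $T_1\le C\,\mu(\varepsilon)^{\beta(p-1)+N}|x|^{\beta(p-1)-sp}$ into the strictly negative main term $\mathcal{C}(\beta)|x|^{\beta(p-1)-sp}$ (using $\mathcal{C}(\beta)<0$ from \eqref{signodecbeta}, since $\beta<\tfrac{ps-N}{p-1}$). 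The paper, by contrast, throws away the $\mathcal{C}(\beta)$ contribution (it has the right sign but is not used) and instead produces the needed negativity by a change of variables and rotation invariance in $G_1=T_2$, yielding a bound $G_1(x)\le D_N\,r^{\beta(p-1)-sp}$ with a strictly negative constant $D_N$ independent of $r$. Both routes give the same conclusion with $\varepsilon_0$ independent of $r$, because $r_\varepsilon=r\,\mu(\varepsilon)$ makes the $r$-powers cancel. Your version is a little leaner — it replaces the quantitative estimate of $G_1$ by a pure sign argument and reuses $\mathcal{C}(\beta)<0$, which is also how Lemma~\ref{lemma:aux1} operates — so it is arguably the more systematic of the two.
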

	
	\begin{proof}
		Let $\varphi\in C_c^\infty(\mathcal{A}_r)$ be non-negative. Then
		\begin{equation}\label{eq:l2_1}		
			\int\displaylimits_{\mathbb{R}^{2N}}
			\frac{\Psi_p
			(\psi_{\varepsilon}(x)-\psi_{\varepsilon}(y))(\varphi(x)-\varphi(y))}{|x-y|^{N+sp}} dxdy=
			J_1+2J_2,
		\end{equation}
		where
		\begin{align*}		
			J_1&=\int\displaylimits_{\mathcal{A}_r^2}\frac{\Psi_p
					(\psi_{\varepsilon}(x)-\psi_{\varepsilon}(y))(\varphi(x)-\varphi(y))}{|x-y|^{N+sp}} dxdy,\\
			J_2&=\int\displaylimits_{\mathcal{A}_r}\int\displaylimits_{\mathbb{R}^N\setminus \mathcal{A}_r}
					\frac{\Psi_p
					(\psi_{\varepsilon}(x)-\psi_{\varepsilon}(y))}{|x-y|^{N+sp}} \varphi(x) dydx.
		\end{align*}

		Observe that
		\[
			\mathbb{R^N}\setminus \mathcal{A}_r=C_r^1\cup C_r^2\cup C_r^3
		\]
		with
		\[
			C_r^1\coloneqq A_{r_\varepsilon,\tfrac{r}{2}},\quad C_r^2\coloneqq\mathbb{R}^N\setminus B_{2r}
			\quad\text{and}\quad C_r^3\coloneqq B_{r_\varepsilon}.
		\]
		Then
		\begin{align*}
			J_2&=\int\displaylimits_{\mathcal{A}_r}\int\displaylimits_{C_r^1}
			\dfrac{\Psi_p(|x|^\beta-|y|^\beta)}{|x-y|^{N+sp}}dy
			\,\varphi(x) dx\\
			&+\int\displaylimits_{\mathcal{A}_r}\int\displaylimits_{C_r^2}\dfrac{||x|^\beta-(2r)^\beta|^{p-1}}{|x-y|^{N+sp}}dy
			\,\varphi(x) dx
			-\int\displaylimits_{\mathcal{A}_r}\int\displaylimits_{C_r^3}\dfrac{|r_\varepsilon^\beta-|x|^\beta|^{p-1}}{|x-y|^{N+sp}}dy
			\,\varphi(x) dx.
		\end{align*}
		Thus
		\[
				J_1+2J_2=
				\int\displaylimits_{\mathbb{R}^{2N}}\frac{
					\Psi_p(|x|^\beta-|y|^\beta)(\varphi(x)-\varphi(y))}{|x-y|^{N+sp}} dxdy+2\int\displaylimits_{\mathcal{A}_r}\left(G_1(x)+G_2(x)\right)\varphi(x)dx,
		\]
		where
		\begin{align*}
			G_1(x)&=\int\displaylimits_{C^2_r}\dfrac{||x|^\beta-(2r)^\beta|^{p-1}-||x|^\beta-|y|^\beta|^{p-1}}{|x-y|^{N+sp}}dy,\\
			G_2(x)&=\int\displaylimits_{C^3_r}\dfrac{||y|^\beta-|x|^\beta|^{p-1}
			-|r_\varepsilon^\beta-|x|^\beta|^{p-1}}{|x-y|^{N+sp}}dy.
		\end{align*}
		
		By Theorem \ref{theorem:casepsneqN}, we have
		\begin{equation}\label{eq:l2_2}
			J_1+J_2\le 2\int\displaylimits_{\mathcal{A}_r}\left(G_1(x)+G_2(x)\right)\varphi(x)dx.
		\end{equation}
		
		On the other hand, for any $x\in \mathcal{A}_r$ 
		\begin{equation}\label{eq:l2_3}
			G_2(x)\le \alpha_N \dfrac{r_\varepsilon^{\beta(p-1)+N}}{r^{N+sp}},
		\end{equation}
		by calculation as in the proof of Lemma \ref{lemma:aux1}. 
		
		Now, taking $x\in \mathcal{A}_r,$ we get
		\begin{align*}
			G_1(x)&=\int\displaylimits_{C^2_r}\dfrac{||x|^\beta-(2r)^\beta|^{p-1}-||x|^\beta-|y|^\beta|^{p-1}}{|x-y|^{N+sp}}dy\\
			&=\int\displaylimits_{C^2_r}\dfrac{\left|1-\left(\tfrac{2r}{|x|}\right)^\beta\right|^{p-1}
			-\left|1-\left(\tfrac{|y|}{|x|}\right)^\beta\right|^{p-1}}{\left|\tfrac{x}{|x|}-\tfrac{y}{|x|}
			\right|^{N+sp}}dy|x|^{\beta(p-1)-N-sp}.
		\end{align*}	
		Making the change of variable $z=\tfrac{y}{|x|},$ we obtain that
		\[
			G_1(x)=\int\displaylimits_{\mathbb{R}^N\setminus B_{\frac{2r}{|x|}}}
			\dfrac{\left|1-\left(\tfrac{2r}{|x|}\right)^\beta\right|^{p-1}
			-\left|1-|z|^\beta\right|^{p-1}}{\left|\tfrac{x}{|x|}-z
			\right|^{N+sp}}dy|x|^{\beta(p-1)-sp}.
		\]
		Since $x\in \mathcal{A}_r$ and $\beta<0,$ we have that $2<\tfrac{2r}{|x|}<4$ and
		\begin{align*}
			G_1(x)
			&\le\int\displaylimits_{\mathbb{R}^N
				\setminus B_{4}}
			\dfrac{\left|1-\left(\tfrac{2r}{|x|}\right)^\beta\right|^{p-1}
			-\left|1-|z|^\beta\right|^{p-1}}{\left|\tfrac{x}{|x|}-z
			\right|^{N+sp}}dy\left(\frac{r}{2}\right)^{\beta(p-1)-sp}\\
			&\le 
			\int\displaylimits_{\mathbb{R}^N\setminus B_{4}}
			\dfrac{\left|1-4^\beta\right|^{p-1}
				-\left|1-|z|^\beta\right|^{p-1}}{\left|\tfrac{x}{|x|}-z
				\right|^{N+sp}}dy\left(\frac{r}{2}\right)^{\beta(p-1)-sp}
		\end{align*}
		As the last integration is invariant under the rotation of coordinate axes, we get
		\begin{equation}\label{eq:l2_4}
			G_1(x)\le D_N r^{\beta(p-1)-sp} \quad \forall x\in \mathcal{A}_r,
		\end{equation}
		here $D_N$ denotes a negative constant depending only on N.
		
		Finally by \eqref{eq:l2_1}, \eqref{eq:l2_2}, \eqref{eq:l2_4} and \eqref{eq:l2_3},
		\begin{align*}
			\int\displaylimits_{\mathbb{R}^{2N}}&\frac{\Psi_p
			(\psi_{\varepsilon}(x)-\psi_{\varepsilon}(y))(\varphi(x)-\varphi(y))}{|x-y|^{N+sp}} dxdy\\
			&\le \left(D_N r^{\beta(p-1)-sp} + \alpha_N \dfrac{r_\varepsilon^{\beta(p-1)+N}}{r^{N+sp}}\right)
			\int\displaylimits_{\mathcal{A}_r}\varphi(x)\, dx\\
			&\le \left(D_N  + \alpha_N 
			\left(\frac{\varepsilon}{1+\varepsilon 2^\beta}\right)^{-(p-1)-\tfrac{N}\beta}\right)r^{\beta(p-1)-sp}
			\int\displaylimits_{\mathcal{A}_r}\varphi(x)\, dx.
		\end{align*}
		Since $D_N<0$ and $\left(\tfrac{\varepsilon}{1+\varepsilon 2^\beta}\right)^{-(p-1)-\tfrac{N}\beta}\to0^+$ 
		as $\varepsilon\to0^,$ there is a positive $\varepsilon_0$ independent of $r$ such that
		\[
			\int\displaylimits_{\mathbb{R}^{2N}}\frac{\Psi_p
			(\psi_{\varepsilon}(x)-\psi_{\varepsilon}(y))(\varphi(x)-\varphi(y))}{|x-y|^{N+sp}} dxdy\le0.
		\]
	\end{proof}

\subsection{Hadamard-type results}
	We are now in a position to prove our Hadamard properties.
	\begin{lemma}\label{lemma:Had1}
		Let $N\ge2,$ $0<s<1$ and $1<p<\infty.$ If $N>ps$ then for 
		any $\beta\in\left(-\tfrac{N}{p-1},\tfrac{ps-N}{p-1}\right)$
		and any $r_0>1$ there is a positive constant  $C>0$ such that for any $u\not\equiv0$  
		non-negative lower semi-continuous weak solution  of
		\[
			(-\Delta_p)^su\ge0\quad\text{in }\mathbb{R}^N,
		\]
		we have that
		\[
			m(r)\ge Cm(r_0)r^\beta \quad\forall r>r_0,
		\]
		where $m(r)\coloneqq \min\{u(x)\colon x\in \overline{B_r}(0)\}.$
	\end{lemma}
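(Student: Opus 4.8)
The plan is to prove this by a barrier/comparison argument: use the truncated power $\phi_\varepsilon$ from Lemma~\ref{lemma:aux1} to manufacture a subsolution that sits below $u$ outside an annulus $A_{r_0,R}$, invoke the comparison principle on that annulus, and then let $R\to\infty$. The feature of Lemma~\ref{lemma:aux1} that makes the last step legitimate is precisely that the threshold $\varepsilon_0$ there is independent of $R$. To set up, fix $r_0>1$ and $\beta\in\left(-\tfrac{N}{p-1},\tfrac{ps-N}{p-1}\right)$ (so $\beta<0$, and recall $\mathcal C(\beta)<0$ by \eqref{signodecbeta}), and apply Lemma~\ref{lemma:aux1} with $r=r_0$ to obtain $\varepsilon_0\in(0,1)$, independent of $R$, with $\phi_\varepsilon$ a weak solution of $(-\Delta_p)^s\phi_\varepsilon\le0$ in $A_{r_0,R}$ for all $\varepsilon\in(0,\varepsilon_0)$ and all $R>r_0$. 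Fix $\varepsilon\coloneqq\varepsilon_0/2$; note $\varepsilon^{-\beta}<1<\varepsilon^{\beta}$ and that $\phi_\varepsilon\in L^{p-1}_s(\mathbb R^N)$, being bounded near the origin and equal to $|x|^\beta$ (with $-N<\beta(p-1)<sp$) away from it. Given a non-negative lower semi-continuous weak supersolution $u$ of $(-\Delta_p)^su\ge0$ in $\mathbb R^N$, set $m_0\coloneqq m(r_0)\ge0$; if $m_0=0$ the asserted bound is $m(r)\ge0$, which is trivial, so assume $m_0>0$.

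For each $R>r_0$ set $\lambda\coloneqq m_0\,\varepsilon^{-\beta}>0$ and
\[
	v_R(x)\coloneqq\lambda\bigl(\phi_\varepsilon(x)-R^\beta\bigr).
\]
By the $(p-1)$-homogeneity of $(-\Delta_p)^s$ and its invariance under adding a constant, together with Lemma~\ref{lemma:aux1}, $v_R$ is a weak solution of $(-\Delta_p)^sv_R\le0$ in $A_{r_0,R}$, and $v_R\in L^{p-1}_s(\mathbb R^N)$ (constants lie in this space since $N+sp>N$). Moreover $v_R\le u$ on $\mathbb R^N\setminus A_{r_0,R}$: for $|x|\ge R$ one has $\phi_\varepsilon(x)=|x|^\beta\le R^\beta$, hence $v_R(x)\le0\le u(x)$; and on $\overline{B_{r_0}}$ one has $\phi_\varepsilon\le\varepsilon^\beta$ (its maximum, attained on $\overline{B_\varepsilon}$), hence $v_R\le\lambda\varepsilon^\beta=m_0\le u$. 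By the comparison principle for weak sub/supersolutions of the fractional $p$-laplacian we conclude $v_R\le u$ a.e.\ in $A_{r_0,R}$, and hence everywhere there once $u$ is taken to be its lower semi-continuous representative; that is, $u(x)\ge\lambda\bigl(|x|^\beta-R^\beta\bigr)$ for $r_0<|x|<R$. Fixing $x$ with $|x|>r_0$ and letting $R\to\infty$ (so $R^\beta\to0$, as $\beta<0$) gives
\[
	u(x)\ge\lambda\,|x|^\beta\qquad\text{for all }|x|>r_0 .
\]

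To finish, fix $r>r_0$ and $x\in\overline{B_r}$. If $|x|\le r_0$ then $u(x)\ge m_0$; if $r_0<|x|\le r$ then $u(x)\ge\lambda|x|^\beta\ge\lambda r^\beta$, because $\beta<0$. Since $\lambda r^\beta=m_0\,(r/\varepsilon)^\beta\le m_0$ (using $r>r_0>1>\varepsilon$ and $\beta<0$), in either case $u(x)\ge\lambda r^\beta$, so $m(r)\ge\lambda r^\beta=\varepsilon^{-\beta}m_0\,r^\beta$. This is the assertion with $C\coloneqq\varepsilon^{-\beta}$, a constant depending only on $N,s,p,\beta$ and $r_0$, not on $u$.

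I do not expect a deep obstacle; the delicate points are bookkeeping forced by $\beta<0$: the inner plateau value $\varepsilon^\beta$ of $\phi_\varepsilon$ exceeds $1$, so the normalization $\lambda$ must be chosen so that $\lambda\varepsilon^\beta$ is still controlled by $m_0$, and one must also check $\lambda r^\beta\le m_0$ so that the annular lower bound (and not the crude bound $m_0$ on $\overline{B_{r_0}}$) governs $m(r)$; additionally, the barrier $v_R$ must stay a valid subsolution as $R\to\infty$, which is exactly why Lemma~\ref{lemma:aux1} was formulated with $\varepsilon_0$ independent of $R$. The remaining routine technical point is upgrading the comparison inequality from almost-everywhere to pointwise, which is standard for lower semi-continuous representatives of weak supersolutions of the fractional $p$-laplacian.
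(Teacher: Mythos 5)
Your proof is correct and follows essentially the same route as the paper's: both build a barrier from the truncated power $\phi_\varepsilon$ of Lemma~\ref{lemma:aux1}, shift and normalize it so that it lies below $u$ off the annulus $A_{r_0,R}$, invoke the comparison principle on $A_{r_0,R}$ (valid uniformly in $R$ since $\varepsilon_0$ is $R$-independent), and let $R\to\infty$. The only cosmetic difference is that you use the $R$-independent normalizer $\lambda = m(r_0)\varepsilon^{-\beta}$ in place of the paper's $R$-dependent $m(r_0)/(\varepsilon^\beta - R^\beta)$ — a slight simplification with the same limit — and you spell out the final bookkeeping (comparing $m_0$ against $\lambda r^\beta$ on $\overline{B_{r_0}}$) a bit more explicitly than the paper does.
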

	\begin{proof}
		Let $r_0>1.$ By Lemma \ref{lemma:aux1}, there exist $\varepsilon_0>0$ such that
		$\phi_\varepsilon$ is a weak solution of
		\begin{equation}
			\label{eq:Had1_1}
				(-\Delta_p)^s \phi_\varepsilon(x)\le0 \quad\text{in } A_{r_0,R}
		\end{equation}
		for any $R>r_0.$ For any $\varepsilon>\varepsilon_0$ and $R>r_0,$ we define
		\[
			H_{\varepsilon,R}(x)\coloneqq \dfrac{m(r_0)}{\varepsilon^\beta-R^\beta}
			\begin{cases}
				\phi_\varepsilon(x)-R^\beta&\text{if } |x|\ge \varepsilon,\\
				0&\text{if } |x|\le \varepsilon.
			\end{cases}
		\]
		 Observe that $H_{\varepsilon,R}$ is a weak solution of \eqref{eq:Had1_1}
		 and $u\ge H_{\varepsilon,R}$ in $\mathbb{R}^N\setminus A_{r_0,R}.$ 
		 Thus, by the comparison principle (see, for instance, \cite{MR3631323}), we have that
		 $u\ge H_{\varepsilon,R}$ in $\mathbb{R}^N.$ Then
		 \[
		 	u(x)\ge \dfrac{m(r_0)}{\varepsilon^\beta-R^\beta} (|x|^\beta-R^\beta) \text{ in }
		 	A_{r_0,R}.
		 \] 
		 Passing to limits as $R\to\infty,$ we have that
		 \[
		 	u(x)\ge \frac{m(r_0)}{\varepsilon^\beta}|x|^\beta \text{ in } \{x\in\mathbb{R}^N\colon |x|>r_0\}.
		 \]
		 Finally, taking $C=\varepsilon_0^{-\beta}$ we have
		 \[
		 	m(r)\ge Cm(r_0)r^\beta\quad\forall r\ge r_0.
		 \]
	\end{proof}
	
	Our second Hadamard property is
	
	\begin{lemma}\label{lemma:Had2}
		Let $N\ge2,$ $0<s<1$ and $1<p<\infty.$ If $N>ps$ then 
		there is a positive constant  $C>0$ such that for any  $u\not\equiv0$
		non-negative lower semi-continuous weak solution  of
		\[
			(-\Delta_p)^su\ge 0\quad\text{in }\mathbb{R}^N
		\]
		we have that
		\[
			m(\tfrac{r}2)\le Cm(r) \quad\forall r>1,
		\]
		where $m(r)\coloneqq \min\{u(x)\colon x\in \overline{B_r}(0)\}.$
	\end{lemma}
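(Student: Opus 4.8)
The plan is to adapt the barrier argument from the proof of Lemma \ref{lemma:Had1}, but now using the cut-off subharmonic function $\psi_\varepsilon$ from Lemma \ref{lemma:aux2} in place of $\phi_\varepsilon$. Fix $\beta\in\left(-\tfrac{N}{p-1},\tfrac{ps-N}{p-1}\right)$ once and for all (since $N>ps$ this interval is nonempty and $\beta<0$), and let $r>1$. By Lemma \ref{lemma:aux2} there is $\varepsilon_0\in(0,1)$, \emph{independent of $r$}, such that for all $\varepsilon\in(0,\varepsilon_0)$ the function $\psi_\varepsilon$ is a weak subsolution of $(-\Delta_p)^s\psi_\varepsilon\le 0$ in $\mathcal{A}_r=A_{r/2,2r}$. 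Note $\psi_\varepsilon$ equals the constant $r_\varepsilon^\beta$ on $B_{r_\varepsilon}$ (with $r_\varepsilon<r/2$), equals $|x|^\beta$ on the annulus $r_\varepsilon<|x|\le 2r$, and equals $(2r)^\beta$ for $|x|>2r$.

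Next I would build the comparison barrier. On the sphere $|x|=r/2$ the solution $u$ satisfies $u(x)\ge m(r/2)$ by definition (actually we want a lower bound on the \emph{inner} boundary of the annulus in terms of $m(r/2)$, but since $r/2<r$ and we will conclude on $|x|=r$, the right normalization is to use $m(r/2)$ on $\partial B_{r/2}$), while on $|x|=2r$ we only know $u\ge 0$. Define, for $\varepsilon\in(0,\varepsilon_0)$,
\[
	W_\varepsilon(x)\coloneqq \frac{m(r/2)}{(r/2)^\beta-(2r)^\beta}\left(\min\{\psi_\varepsilon(x),(r/2)^\beta\}-(2r)^\beta\right),
\]
so that $W_\varepsilon$ is a weak subsolution in $\mathcal{A}_r$ (truncating a subsolution from above by a constant preserves the subsolution property, or alternatively one checks directly as in Lemma \ref{lemma:aux1}), $W_\varepsilon\le 0 = $ lower bound for $u$ on $C_r^2=\mathbb{R}^N\setminus B_{2r}$ after normalizing, and $W_\varepsilon\le m(r/2)\le u$ on $\overline{B_{r/2}}$. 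Hence $u\ge W_\varepsilon$ in $\mathbb{R}^N\setminus\mathcal{A}_r$, and by the comparison principle (\cite{MR3631323}) $u\ge W_\varepsilon$ throughout $\mathbb{R}^N$, in particular on $\partial B_r$. Evaluating at a point $x$ with $|x|=r$ gives
\[
	u(x)\ge \frac{m(r/2)}{(r/2)^\beta-(2r)^\beta}\left(r^\beta-(2r)^\beta\right)
	= m(r/2)\,\frac{1-2^\beta}{2^{-\beta}-2^\beta},
\]
and since this holds for every $x\in\partial B_r$ while $u$ is lower semi-continuous and attains its minimum over $\overline{B_r}$ either on $\partial B_r$ or... — here one must be slightly careful — one instead applies the same barrier on every annulus $A_{\rho/2,2\rho}$ for $r/2\le\rho\le r$, or more simply notes that by the argument above $m(\rho)\ge c\,m(\rho/2)$ wait; the cleanest route is: the bound $u(x)\ge c\,m(r/2)$ holds for all $|x|=r$ with $c=\frac{1-2^\beta}{2^{-\beta}-2^\beta}>0$ independent of $r$ and $\varepsilon$, and combining with the fact that $m$ is non-increasing would go the wrong way, so instead one observes $m(r)=\min_{\overline{B_r}}u$, and by the minimum principle / the fact that $u$ is a supersolution, the minimum of $u$ over $\overline{B_r}$ is attained on $\partial B_r$; therefore $m(r)\ge c\,m(r/2)$, i.e. $m(r/2)\le C m(r)$ with $C=c^{-1}$.

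The main obstacle is justifying that $\min_{\overline{B_r}}u$ is attained on the boundary $\partial B_r$ — equivalently, a strong-minimum-type statement for nonnegative weak $p$-superharmonic functions. For the \emph{fractional} $p$-Laplacian this is a consequence of the (weak or strong) minimum principle for supersolutions: if the minimum were attained at an interior point $x_0\in B_r$ with $u(x_0)=m(r)$, then testing the supersolution inequality against a bump at $x_0$ forces, via the nonlocality, $u\equiv m(r)$ on a neighbourhood and then on all of $\mathbb{R}^N$ by a propagation/connectedness argument, contradicting $u\not\equiv 0$ unless $m(r)=\min_{\mathbb{R}^N}u$ — which still yields $m(r/2)=m(r)$ and the inequality trivially. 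So in fact one does not even need the minimum to be on the boundary: either the minimum over $\overline{B_r}$ equals the minimum over $\overline{B_{r/2}}$ (then $C=1$ works), or it is attained on $\partial B_r$ and the barrier argument above applies. I would organize the write-up around this dichotomy to avoid invoking a delicate interior regularity statement, and the remaining computation — that the constant $c=\frac{1-2^\beta}{2^{-\beta}-2^\beta}$ is positive and depends only on $\beta$ (hence only on $N,s,p$ after fixing, say, $\beta=\tfrac12\left(-\tfrac{N}{p-1}+\tfrac{ps-N}{p-1}\right)$) — is routine.
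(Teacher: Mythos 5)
Your overall strategy—a barrier built from $\psi_\varepsilon$ of Lemma~\ref{lemma:aux2} plus the comparison principle—is the same as the paper's, but there is a genuine gap in the truncation step, and the paper's proof is actually simpler than yours at the point where you struggle.

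The gap is the claim that $W_\varepsilon(x)=\tfrac{m(r/2)}{(r/2)^\beta-(2r)^\beta}\bigl(\min\{\psi_\varepsilon(x),(r/2)^\beta\}-(2r)^\beta\bigr)$ is still a weak subsolution in $\mathcal{A}_r$. Truncating a \emph{subsolution} from above by a constant does not, in general, preserve the subsolution property (it is $\max$ of subsolutions, or $\min$ of \emph{super}solutions, that is well behaved). Concretely, in $\overline{B_{r/2}}$ one has $\psi_\varepsilon(y)\ge (r/2)^\beta$, so the truncation \emph{lowers} the values of the barrier there; then for $x\in\mathcal{A}_r$ and $y\in\overline{B_{r/2}}$ the difference $\min\{\psi_\varepsilon(x),(r/2)^\beta\}-\min\{\psi_\varepsilon(y),(r/2)^\beta\}$ is larger (less negative) than $\psi_\varepsilon(x)-\psi_\varepsilon(y)$, and since $\Psi_p$ is increasing this pushes $(-\Delta_p)^s$ \emph{up}, potentially above $0$. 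Unlike the small-scale truncation carried out in Lemma~\ref{lemma:aux1} (where the modification lives in a tiny ball $B_\varepsilon$ and the error term is controlled by $\varepsilon^{\beta(p-1)+N}$), here the truncation level $(r/2)^\beta$ modifies the barrier on the whole of $\overline{B_{r/2}}$, which is a macroscopic region; there is no smallness to exploit, so the proof of Lemma~\ref{lemma:aux1} does not transfer. The paper sidesteps the issue entirely by normalizing with $r_\varepsilon^\beta-(2r)^\beta$ rather than $(r/2)^\beta-(2r)^\beta$: it sets $J_{\varepsilon,r}=m(r/2)\,\tfrac{\psi_\varepsilon-(2r)^\beta}{r_\varepsilon^\beta-(2r)^\beta}$, which is a bona fide positive affine transformation of the \emph{untruncated} $\psi_\varepsilon$ (hence a weak subsolution), and which automatically satisfies $J_{\varepsilon,r}\le m(r/2)\le u$ on $\overline{B_{r/2}}$ because $\psi_\varepsilon\le r_\varepsilon^\beta$ everywhere. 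Evaluating at $|x|=r$ then yields $m(r)\ge\varepsilon(1-2^\beta)\,m(r/2)$, with $\varepsilon$ a \emph{fixed} number in $(0,\varepsilon_0)$ (and $\varepsilon_0$ from Lemma~\ref{lemma:aux2} is $r$-independent), so the constant is uniform in $r$.

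The long detour about whether $\min_{\overline{B_r}}u$ is attained on $\partial B_r$ is unnecessary. The comparison principle gives $u\ge J_{\varepsilon,r}$ on \emph{all} of $\mathbb{R}^N$; since $J_{\varepsilon,r}$ is radially non-increasing, $\min_{\overline{B_r}}J_{\varepsilon,r}$ is attained at $|x|=r$, and therefore $m(r)=\min_{\overline{B_r}}u\ge\min_{\overline{B_r}}J_{\varepsilon,r}=J_{\varepsilon,r}\big|_{|x|=r}$ without any knowledge of where $u$ attains its minimum. You can delete that whole dichotomy.
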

	\begin{proof}
		Let $r>1$ and 
		$\beta\in\left(-\tfrac{N}{p-1},\tfrac{ps-N}{p-1}\right).$ 
		By Lemma \ref{lemma:aux2}, there is $\varepsilon_0>0$ independent of $r$ 
		such that $\psi_\varepsilon$ is a weak solution of
		\begin{equation}\label{eq:Had2_1}
				(-\Delta_p)^s \psi_\varepsilon(x)\le0 \quad\text{in } \mathcal{A}_r.
		\end{equation}
		For any $\varepsilon>\varepsilon_0,$  we define
		\[
			J_{\varepsilon,r}(x)\coloneqq m\left(\tfrac{r}2\right)
			\dfrac{\psi_\varepsilon(x)-(2r)^\beta}{r_\varepsilon^\beta-(2r)^\beta}.
		\]
		 Observe that $J_{\varepsilon,r}$ is also weak solution of \eqref{eq:Had2_1}
		 and $u\ge J_{\varepsilon,r}$ in $\mathbb{R}^N\setminus \mathcal{A}_r.$ 
		 Thus, by the comparison principle, we have that
		 $u\ge J_{\varepsilon,r}$ in $\mathbb{R}^N.$ Then
		 \begin{align*}
			m(r)&\ge m\left(\tfrac{r}2\right)
			\min\left\{\dfrac{\psi_\varepsilon(x)-(2r)^\beta}{r_\varepsilon^\beta-(2r)^\beta}\colon
			|x|=r\right\}=m\left(\tfrac{r}2\right)\frac{r^\beta-2^\beta r^\beta}{r^\beta
			\left(\frac{\varepsilon}{1+\varepsilon 2^\beta}\right)^{-1}-2^\beta r^\beta}\\
			&=	m\left(\tfrac{r}2\right)\varepsilon(1-2^\beta).
		\end{align*}
		 Finally, taking $C=\varepsilon (1-2^\beta)$ we have
		 \[
		 	m(r)\ge Cm\left(\tfrac{r}2\right).
		 \]
	\end{proof}
\section{A Liouville-type theorem}\label{st1}
	We now prove a Liouville-type theorem. We split the proof into two cases.
	
\subsection{Case \texorpdfstring{$N<sp$}{N<sp}} Let $\beta\in\left(0,\tfrac{ps-N}{p-1}\right),$ 
		$0<\varepsilon<1<r<R,$ and
		$u$ be a non-negative lower semi-continuous weak solution of
		\begin{equation}\label{eq:l13}
			(-\Delta_p)^s u \ge 0\quad\text{in }\mathbb{R}^N.
		\end{equation} 
		
		We know, by Theorem \ref{Theorem:Fundamental},
		$v_\beta(x)=|x|^\beta$ is a weak solution of
		\begin{equation}\label{eq:l11}
				(-\Delta_p)^s v_\beta(x)= 
					\mathcal{C}(\beta)|x|^{\beta(p-1)-sp}\quad\text{in }
					\mathbb{R}^N\setminus\{0\}
		\end{equation}
		with $\mathcal{C}(\beta)>0$ (see \eqref{signodecbeta}). We now define 
		\[
			\theta^\varepsilon_\beta(x)\coloneqq m(r)
			\begin{cases}
				1&\text{if }0\le |x|<\varepsilon,\\
				\tfrac{R^\beta-|x|^\beta}{R^\beta-\varepsilon^\beta}
				&\text{if }\varepsilon\le |x|<R,\\
				0&\text{if }R\le |x|,
			\end{cases}
		\]
		where $m(r)\coloneqq \min\{u(x)\colon x\in \overline{B_r}(0)\}.$

		First, we prove the following auxiliary result.
	
		\begin{lemma}\label{lemma:aux12}
			For $\varepsilon$ sufficiently small, $\theta^\varepsilon_\beta$ is a weak solution of
			\begin{equation}\label{eq:l12}
				(-\Delta_p)^s \theta^\varepsilon_\beta(x)\le0\quad\text{in }A_{r,R}.
			\end{equation}
		\end{lemma}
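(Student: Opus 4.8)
The plan is to reduce Lemma \ref{lemma:aux12} to the fact, already available through Theorem \ref{theorem:casepsneqN} and \eqref{signodecbeta}, that $v_\beta(x)=|x|^\beta$ is a weak supersolution of $(-\Delta_p)^s\,\cdot\,\ge 0$ in $\mathbb{R}^N\setminus\{0\}$ when $0<\beta<\tfrac{ps-N}{p-1}$ (there $(-\Delta_p)^s v_\beta=\mathcal{C}(\beta)|x|^{\beta(p-1)-sp}$ with $\mathcal{C}(\beta)>0$), and then to absorb the two boundary corrections created by the cutoffs of $\theta^\varepsilon_\beta$ at $|x|=\varepsilon$ and at $|x|=R$. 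If $m(r)=0$ then $\theta^\varepsilon_\beta\equiv 0$ and there is nothing to prove, so I assume $m(r)>0$. A short algebraic check (using $\beta>0$, hence $\varepsilon^\beta<R^\beta$) shows that $\theta^\varepsilon_\beta=c_0-B\,\Phi_\varepsilon$ for a constant $c_0$, where $B\coloneqq \tfrac{m(r)}{R^\beta-\varepsilon^\beta}>0$ and $\Phi_\varepsilon(x)\coloneqq\min\{\max\{|x|^\beta,\varepsilon^\beta\},R^\beta\}$. Since $\Psi_p$ is odd and positively $(p-1)$-homogeneous, $\Psi_p(\theta^\varepsilon_\beta(x)-\theta^\varepsilon_\beta(y))=-B^{p-1}\Psi_p(\Phi_\varepsilon(x)-\Phi_\varepsilon(y))$, so for every nonnegative $\varphi\in C_c^\infty(A_{r,R})$,
\[
\int_{\mathbb{R}^{2N}}\frac{\Psi_p(\theta^\varepsilon_\beta(x)-\theta^\varepsilon_\beta(y))(\varphi(x)-\varphi(y))}{|x-y|^{N+sp}}\,dx\,dy=-B^{p-1}\int_{\mathbb{R}^{2N}}\frac{\Psi_p(\Phi_\varepsilon(x)-\Phi_\varepsilon(y))(\varphi(x)-\varphi(y))}{|x-y|^{N+sp}}\,dx\,dy,
\]
and it suffices to show that the integral on the right is $\ge 0$, i.e. that $\Phi_\varepsilon$ is a weak supersolution of $(-\Delta_p)^s\,\cdot\,\ge 0$ in $A_{r,R}$.

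To do so I would run the bookkeeping of the proof of Lemma \ref{lemma:aux1}. Put $D\coloneqq\{\varepsilon\le|x|<R\}$, on which $\Phi_\varepsilon=v_\beta$, and note $D^c=B_\varepsilon\cup C_R$ with $C_R\coloneqq\mathbb{R}^N\setminus B_R$, where $\Phi_\varepsilon$ equals $\varepsilon^\beta$ and $R^\beta$ respectively. Since $0<\varepsilon<1<r<R$ we have $\text{supp}\,\varphi\subset A_{r,R}\subset D$; hence any pair $(x,y)$ with both coordinates outside $D$ contributes nothing, and symmetrising the mixed contribution (and invoking Theorem \ref{theorem:casepsneqN} for the piece where both points lie in $D$ together with the missing tail) gives
\[
\int_{\mathbb{R}^{2N}}\frac{\Psi_p(\Phi_\varepsilon(x)-\Phi_\varepsilon(y))(\varphi(x)-\varphi(y))}{|x-y|^{N+sp}}\,dx\,dy=\mathcal{C}(\beta)\int_{\mathbb{R}^N}|x|^{\beta(p-1)-sp}\varphi(x)\,dx+\mathcal{R}_\varepsilon+\mathcal{R}_R,
\]
where
\begin{align*}
\mathcal{R}_\varepsilon &\coloneqq 2\int_{A_{r,R}}\varphi(x)\int_{B_\varepsilon}\frac{\Psi_p(|x|^\beta-\varepsilon^\beta)-\Psi_p(|x|^\beta-|y|^\beta)}{|x-y|^{N+sp}}\,dy\,dx,\\
\mathcal{R}_R &\coloneqq 2\int_{A_{r,R}}\varphi(x)\int_{C_R}\frac{\Psi_p(|x|^\beta-R^\beta)-\Psi_p(|x|^\beta-|y|^\beta)}{|x-y|^{N+sp}}\,dy\,dx.
\end{align*}
By \eqref{signodecbeta}, $\mathcal{C}(\beta)>0$; and since $\beta(p-1)-sp<0$ and $|x|<R$ on $A_{r,R}$, the main term is bounded below by $\mathcal{C}(\beta)R^{\beta(p-1)-sp}\int\varphi$, a fixed positive multiple of $\int\varphi$.

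It then remains to control the two corrections. For $\mathcal{R}_R$: on $C_R$ one has $|x|<R\le|y|$ and $\beta>0$, so $0\ge|x|^\beta-R^\beta\ge|x|^\beta-|y|^\beta$, whence by monotonicity of $\Psi_p$ the integrand is nonnegative and $\mathcal{R}_R\ge 0$ (all integrals here converge absolutely because $v_\beta\in L_s^{p-1}(\mathbb{R}^N)$ and $\text{dist}(\text{supp}\,\varphi,\{|x|=R\})>0$). For $\mathcal{R}_\varepsilon$: from $\text{supp}\,\varphi\subset A_{r,R}$ and $|y|<\varepsilon<1<r<|x|$ we get $|x-y|\ge r-1>0$, and both arguments $|x|^\beta-\varepsilon^\beta,\ |x|^\beta-|y|^\beta$ lie in the fixed compact interval $[r^\beta-1,R^\beta]\subset(0,\infty)$, on which $\Psi_p$ is Lipschitz; hence $|\Psi_p(|x|^\beta-\varepsilon^\beta)-\Psi_p(|x|^\beta-|y|^\beta)|\le L\,\varepsilon^\beta$ with $L=L(r,R,\beta,p)$, and integrating over $B_\varepsilon$ yields $|\mathcal{R}_\varepsilon|\le C\,\varepsilon^{\beta+N}\int\varphi$ with $C=C(r,R,N,s,p,\beta)$. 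Therefore
\[
\int_{\mathbb{R}^{2N}}\frac{\Psi_p(\Phi_\varepsilon(x)-\Phi_\varepsilon(y))(\varphi(x)-\varphi(y))}{|x-y|^{N+sp}}\,dx\,dy\ge\big(\mathcal{C}(\beta)R^{\beta(p-1)-sp}-C\varepsilon^{\beta+N}\big)\int_{\mathbb{R}^N}\varphi\,dx\ge 0
\]
as soon as $C\varepsilon^{\beta+N}\le\mathcal{C}(\beta)R^{\beta(p-1)-sp}$, a smallness condition on $\varepsilon$ depending only on $r,R,N,s,p,\beta$ and not on $\varphi$; combined with the first displayed identity this gives $(-\Delta_p)^s\theta^\varepsilon_\beta\le 0$ weakly in $A_{r,R}$.

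The step I expect to be the main obstacle is handling the two cutoffs simultaneously while keeping the threshold for $\varepsilon$ independent of the test function. The correction at the origin, $\mathcal{R}_\varepsilon$, is genuinely negligible (it lives on $B_\varepsilon$ and the kernel is bounded there), but the outer correction $\mathcal{R}_R$ does \emph{not} vanish as $\varepsilon\to0$, so it must be discarded through its sign rather than an estimate; one must also choose $\varepsilon$ small only in terms of $r,R,N,s,p,\beta$. A secondary technical point is checking the absolute convergence of every piece of the splitting — which follows from $\theta^\varepsilon_\beta$ being bounded and Lipschitz and from $v_\beta\in L_s^{p-1}(\mathbb{R}^N)$ — so that the symmetrisations and rearrangements above are legitimate.
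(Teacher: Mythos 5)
Your proof is correct and follows essentially the same approach as the paper's: split the bilinear form into a main term evaluated via Theorem \ref{theorem:casepsneqN}, an outer correction at $|x|=R$ discarded by a sign argument, and an inner correction at $|x|=\varepsilon$ made small by choosing $\varepsilon$ small. The only cosmetic differences are that you first perform the affine reduction $\theta^\varepsilon_\beta=c_0-B\,\Phi_\varepsilon$ (the paper computes directly with $\theta^\varepsilon_\beta$), and that you control the inner correction by a uniform constant $C\varepsilon^{N+\beta}$ while lower-bounding the main term by $\mathcal{C}(\beta)R^{\beta(p-1)-sp}\int\varphi$, whereas the paper keeps the common weight $|x|^{\beta(p-1)-sp}$ in both terms and compares coefficients.
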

		\begin{proof}
			Let $\varphi\in C^\infty_0(A_{r,R})$ be non-negative. Then
			\[
				\int\displaylimits_{\mathbb{R}^{2N}}
				\frac{\Psi_p
				(\theta^\varepsilon_\beta(x)
				-\theta^\varepsilon_\beta(y))(\varphi(x)-\varphi(y))}{|x-y|^{N+sp}}dxdy=I_1+I_2+I_3,
			\]
			where
			\begin{align*}
				I_1=&-\frac{2m(r)^{p-1}}{|R^\beta-\varepsilon^\beta|^{p-1}}
				\int\displaylimits_{A_{r,R}}\int\displaylimits_{B_\varepsilon(0)}
				\dfrac{||x|^\beta-\varepsilon^\beta|^{p-1}}{|x-y|^{N+sp}}\varphi(x) dydx\\
				I_2=&\frac{2m(r)^{p-1}}{|R^\beta-\varepsilon^\beta|^{p-1}}
				\int\displaylimits_{A_{r,R}}\int\displaylimits_{\mathbb{R}^N\setminus B_R(0)}
				\dfrac{|R^\beta-|x|^\beta|^{p-1}}{|x-y|^{N+sp}}\varphi(x) dydx\\
				I_3=&\frac{m(r)^{p-1}}{|R^\beta-\varepsilon^\beta|^{p-1}}
				\int\displaylimits_{A_{\varepsilon,R}^2}
				\frac{\Psi_p
				(|y|^\beta-|x|^\beta)(\varphi(x)-\varphi(y))}{|x-y|^{N+sp}}dxdy.
			\end{align*} 
			Observe that
			\[
				I_3 = \frac{m(r)^{p-1}}{|R^\beta-\varepsilon^\beta|^{p-1}}
				\int\displaylimits_{\mathbb{R}^{2N}}
				\frac{\Psi_p
				(|y|^\beta-|x|^\beta)(\varphi(x)-\varphi(y))}{|x-y|^{N+sp}}dxdy
				+J_1+J_2
			\]
			with
			\begin{align*}
				J_1&=\frac{2m(r)^{p-1}}{|R^\beta-\varepsilon^\beta|^{p-1}}
				\int\displaylimits_{A_{r,R}}\int\displaylimits_{B_\varepsilon(0)}
				\dfrac{||x|^\beta-|y|^\beta|^{p-1}}{|x-y|^{N+sp}}\varphi(x) dydx,\\
				J_2=&-\frac{2m(r)^{p-1}}{|R^\beta-\varepsilon^\beta|^{p-1}}
				\int\displaylimits_{A_{r,R}}\int\displaylimits_{\mathbb{R}^N\setminus B_R(0)}
				\dfrac{||y|^\beta-|x|^\beta|^{p-1}}{|x-y|^{N+sp}}\varphi(x) dydx.
			\end{align*}
			Then, using that $v_\beta(x)=|x|^\beta$ is a weak solution of \eqref{eq:l11}, we have that
			\begin{align*}
				&\int\displaylimits_{\mathbb{R}^{2N}}
				\frac{\Psi_p
				(\theta^\varepsilon_\beta(x)
				-\theta^\varepsilon_\beta(y))(\varphi(x)-\varphi(y))}{|x-y|^{N+sp}}dxdy=\\
				&=\frac{2m(r)^{p-1}}{|R^\beta-\varepsilon^\beta|^{p-1}}
				\int\displaylimits_{A_{r,R}}\int\displaylimits_{B_\varepsilon(0)}
				\dfrac{||x|^\beta-|y|^\beta|^{p-1}-||x|^\beta
				-\varepsilon^\beta|^{p-1}}{|x-y|^{N+sp}}\varphi(x) dydx\\
				&-\frac{2m(r)^{p-1}}{|R^\beta-\varepsilon^\beta|^{p-1}}
				\int\displaylimits_{A_{r,R}}\int\displaylimits_{\mathbb{R}^N\setminus B_R(0)}
				\dfrac{||y|^\beta-|x|^\beta|^{p-1}-|R^\beta-|x|^\beta|^{p-1}}{|x-y|^{N+sp}}\varphi(x) 
				dydx\\
				&-\mathcal{C}(\beta)\frac{m(r)^{p-1}}{|R^\beta-\varepsilon^\beta|^{p-1}}
				\int\displaylimits_{A_{r,R}}|x|^{\beta(p-1)-sp}\varphi(x) dx\\
				&\le\frac{2m(r)^{p-1}}{|R^\beta-\varepsilon^\beta|^{p-1}}
				\int\displaylimits_{A_{\varepsilon,R}}\int\displaylimits_{B_\varepsilon(0)}
				\dfrac{||x|^\beta-|y|^\beta|^{p-1}-||x|^\beta
				-\varepsilon^\beta|^{p-1}}{|x-y|^{N+sp}}\varphi(x) dydx\\
				&-\mathcal{C}(\beta)\frac{m(r)^{p-1}}{|R^\beta-\varepsilon^\beta|^{p-1}}
				\int\displaylimits_{A_{\varepsilon,R}}|x|^{\beta(p-1)-sp}\varphi(x) dx.
			\end{align*}
			Note that, for any $x\in A_{r,R},$ using the change of variable $z=\tfrac{y}{|x|},$ we have
			\begin{align*}
				\int\displaylimits_{B_\varepsilon(0)}&
				\dfrac{||x|^\beta-|y|^\beta|^{p-1}-||x|^\beta
				-\varepsilon^\beta|^{p-1}}{|x-y|^{N+sp}}dy=\\
				&=\int\displaylimits_{B_\varepsilon(0)}
				\dfrac{|1-\left(\tfrac{|y|}{|x|}\right)^\beta|^{p-1}-|1
				-\left(\tfrac{\varepsilon}{|x|}\right)^\beta|^{p-1}}{\left||\tfrac{x}{|x|}
					-\tfrac{y}{|x|}\right|^{N+sp}}dy|x|^{\beta(p-1)-N-sp}\\
				&=\int\displaylimits_{B_{\frac{\varepsilon}{|x|}}(0)}
				\dfrac{|1-|z|^\beta|^{p-1}-|1
				 -\left(\tfrac{\varepsilon}{|x|}\right)^\beta|^{p-1}}{\left|\tfrac{x}{|x|}
					-z\right|^{N+sp}}dz|x|^{\beta(p-1)-sp}\\
				&\le \frac{\alpha_N\varepsilon^N}
					{\left(1-\tfrac{\varepsilon}{R}\right)^{N+sp}}
				|x|^{\beta(p-1)-sp}.
			\end{align*}
			Therefore,
			\begin{align*}
				\int\displaylimits_{\mathbb{R}^{2N}}&
				\frac{\Psi_p
				(\theta^\varepsilon_\beta(x)
				-\theta^\varepsilon_\beta(y))(\varphi(x)-\varphi(y))}{|x-y|^{N+sp}}dxdy=\\
				&\le\frac{m(r)^{p-1}}{|R^\beta-\varepsilon^\beta|^{p-1}}
				\left(\frac{2\alpha_N\varepsilon^N}
				{\left(1-\tfrac{\varepsilon}{R}\right)^{N+sp}}-\mathcal{C}(\beta)\right)
				\int\displaylimits_{A_{\varepsilon,R}}|x|^{\beta(p-1)-sp}\varphi(x) dx,
				<0
			\end{align*}
			provided $\varepsilon$ is small enough.
		\end{proof}	
		
		We now prove our Liouville-type theorem for the case $sp>N.$
		\begin{theorem}\label{theorem:Liouville1} 
			Let $N\ge2,0<s<1,$ and $1<p<\infty.$ If $N<ps$ and $u$ is a non-negative
			lower semi-continuous weak solution of
			\eqref{eq:l13},
			then $u$ is constant.
		\end{theorem}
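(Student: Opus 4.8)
The plan is to prove that $u$ is constant by combining the truncated barrier $\theta^\varepsilon_\beta$ from Lemma \ref{lemma:aux12} with the comparison principle and a strong minimum principle for the fractional $p$-Laplacian. Fix once and for all $\beta\in\left(0,\tfrac{ps-N}{p-1}\right)$ — a nonempty interval precisely because $N<ps$ — and recall from Theorem \ref{Theorem:Fundamental} and \eqref{signodecbeta} that $v_\beta(x)=|x|^\beta$ solves $(-\Delta_p)^sv_\beta=\mathcal{C}(\beta)|x|^{\beta(p-1)-sp}$ with $\mathcal{C}(\beta)>0$; this sign is exactly what forces the truncated profile to be a subsolution rather than a supersolution, and it is the only input from the first half of the paper that is really needed here.

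\textbf{Step 1 (comparison on an annulus).} Fix $r>1$. Reading off the proof of Lemma \ref{lemma:aux12}, the smallness requirement on $\varepsilon$ is $\tfrac{2\alpha_N\varepsilon^N}{(1-\varepsilon/R)^{N+sp}}<\mathcal{C}(\beta)$, and since $1-\varepsilon/R>1-\varepsilon$ whenever $R>r$, one can pick $\varepsilon=\varepsilon(r)\in(0,1)$ \emph{independent of $R$} so that $(-\Delta_p)^s\theta^\varepsilon_\beta\le 0$ weakly in $A_{r,R}$ for every $R>r$. I would then check the ordering $u\ge\theta^\varepsilon_\beta$ on $\mathbb{R}^N\setminus A_{r,R}$: on $\mathbb{R}^N\setminus B_R$ it is immediate because $\theta^\varepsilon_\beta\equiv 0$ there and $u\ge 0$; on $\overline{B_r}$ one uses $\beta>0$, which gives $\theta^\varepsilon_\beta\le m(r)$ on $\overline{B_r}$ (with the maximum attained on $\overline{B_\varepsilon}$), together with $m(r)=\min_{\overline{B_r}}u$. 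The comparison principle (as in \cite{MR3631323}) then yields $u\ge\theta^\varepsilon_\beta$ on all of $\mathbb{R}^N$, so that
\[
  u(x)\ \ge\ m(r)\,\frac{R^\beta-|x|^\beta}{R^\beta-\varepsilon^\beta}\qquad\text{for } r<|x|<R.
\]

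\textbf{Step 2 ($m$ is constant, then $u$ is constant).} Keeping $\varepsilon=\varepsilon(r)$ fixed and letting $R\to\infty$, the quotient $\tfrac{R^\beta-|x|^\beta}{R^\beta-\varepsilon^\beta}\to 1$ since $\beta>0$, hence $u(x)\ge m(r)$ for all $|x|>r$, and together with $u\ge m(r)$ on $\overline{B_r}$ this gives $u\ge m(r)$ on $\mathbb{R}^N$. Comparing two radii $1<r_1<r_2$ now forces $m(r_2)=\min_{\overline{B_{r_2}}}u\ge m(r_1)$, while $m$ is trivially non-increasing; therefore $m\equiv c$ on $(1,\infty)$ for some constant $c\ge 0$, and $u\ge c$ on $\mathbb{R}^N$ with the value $c$ attained at some point of $\overline{B_2}$ by lower semicontinuity. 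Finally I would set $w\coloneqq u-c\ge 0$; since adding a constant does not change the increments $u(x)-u(y)$, $w$ is still a non-negative weak solution of $(-\Delta_p)^sw\ge 0$ in $\mathbb{R}^N$ that vanishes at an interior point, so by the strong minimum principle for fractional $p$-superharmonic functions (or, passing through Remark \ref{remark.visco.weak}, the strong maximum principle for viscosity supersolutions) one concludes $w\equiv 0$, i.e.\ $u\equiv c$.

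\textbf{Where the difficulty lies.} The genuinely hard analysis has already been spent on the sign of $\mathcal{C}(\beta)$ in Theorem \ref{Theorem:Fundamental} and on the barrier estimate in Lemma \ref{lemma:aux12}; what remains is essentially bookkeeping. The one point that needs attention is the limit $R\to\infty$, which requires choosing $\varepsilon$ uniformly in $R$ — harmless here, since enlarging $R$ only weakens the subsolution condition. The single external ingredient is the nonlocal strong minimum principle invoked at the end of Step 2; this is standard for the fractional $p$-Laplacian but should be cited explicitly.
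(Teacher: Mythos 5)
Your proposal is correct and follows essentially the same route as the paper: the truncated barrier $\theta^\varepsilon_\beta$ from Lemma \ref{lemma:aux12}, the comparison principle on $A_{r,R}$, the limit $R\to\infty$ (and you correctly make explicit that $\varepsilon$ can be chosen uniformly in $R$, which the paper leaves implicit), concluding that $u$ attains a global minimum at a point of $\overline{B_r}$, and finally a strong minimum principle. The only cosmetic differences are that the paper closes the argument by invoking the equivalence with viscosity solutions rather than normalizing $w=u-c$, and your intermediate observation that $m\equiv c$ on $(1,\infty)$ is a harmless detour the paper omits.
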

	
		\begin{proof}		
			Let $\beta\in\left(0,\tfrac{ps-N}{p-1}\right)$ and $0<\varepsilon<1<r<R.$  
			By Lemma \ref{lemma:aux12}, for $\varepsilon$ sufficiently small, $\theta^\varepsilon_\beta$ 
			is a weak solution of \eqref{eq:l12} and it is easy to see that 
			$\theta^\varepsilon_\beta\le u$ 
			in $\mathbb{R}^N\setminus A_{r,R}.$
			Thus, by the comparison principle, we have that
			$\theta^\varepsilon_\beta\le u$ in $\mathbb{R}^N.$ Therefore $m(r)\le u(x)$ for any $|x|\ge r.$ Then
			there is $x_0\in \overline{B_r}(0)$ such that $u(x_0)\le u(x)$ for any $x\in \mathbb{R}^N.$ 
			
			On the other hand, by \cite{MR4030247} 
			(see also \cite{MR4333435}), 
			we know the $u$ is a viscosity solution of \eqref{eq:l13}. 
			Finally, since $u$ attains its minimum, we can conclude that $u$ is constant.
		
		\end{proof}
	
	\subsection{Cases \texorpdfstring{$N=sp$}{N=sp}} 
	    Let $0<\varepsilon<1<r<R.$ In this case, we take 
	    a non-negative function $\zeta_\varepsilon\in 
	    C^\infty_c(\Omega)$ such that
	    \[
               \zeta_\varepsilon(x)= \begin{cases}
                    1 & \text{if } x\in B_{\tfrac{\varepsilon}{2}}(0),\\
                    0 &\text{if } x\in \mathbb{R}^N\setminus
                    B_{\varepsilon}(0),
                \end{cases}	    
	    \]
	    and
	    \[
	        \upxi_\varepsilon (x)\coloneqq
	        \begin{cases}
	            \log(|x|)-\log(\varepsilon)+\kappa
	            \zeta_\varepsilon(x)& \text{if } x\in 
	            B_{\varepsilon}(0),\\
	             0 &\text{if } x\in \mathbb{R}^N\setminus
                    B_{\varepsilon}(0),
	        \end{cases}
	    \]
	    where $\kappa$ is a positive constant to be chosen later.
	    
	    By Theorem \ref{Theorem:Fundamental} and
	    \cite[Lemma 2.8]{MR3593528}, we have that 
	    $\upvarrho_\varepsilon(x)\coloneqq \upxi_\varepsilon(x)-\log(|x|)$
	    satisfies
	    \[
                (-\Delta_p)^s\upvarrho_\varepsilon(x)=h(x)\quad\text{in } A_{r,R}	    
	    \]
	    where for a.e. Lebesgue point $x\in A_{r,R}$
	    \[
	        h(x)\coloneqq
	        2\int_{B_\varepsilon(0)}
	          \frac{F(x,y)}{|x-y|^{N+sp}}dy
	    \]
	    with
	    \[
                F(x,y)\coloneqq \Psi_p\left(-\log\left(\left|x\right|\right)+\log\left(\left|y\right|\right)-
                \upxi_\varepsilon\left(y\right)\right)-\Psi_p\left(-\log\left(\left|x\right|\right)+\log\left(\left|y\right|\right)\right).
	     \]
            
            Observe that, for a.e. Lebesgue point $x\in A_{r,R}$ and
            for any $y\in B_\varepsilon(0),$ we have
            \begin{align*}
                F(x,y)&=\Psi_p(-\log(|x|)+\log(\varepsilon)
                -\kappa \zeta_\varepsilon(y))+(\log(|x|)-\log(|y|)|^{p-1}\\
               &=(\log(|x|)-\log(|y|)|^{p-1}-(\log(|x|)-\log(\varepsilon)+
               \kappa \upxi_\varepsilon(y))^{p-1}.
            \end{align*}
            Then for a.e. Lebesgue point $x\in A_{r,R}$
            \[
                  h(x)= 2\int_{B_\varepsilon(0)}
                  \frac{(\log(|x|)-\log(|y|))^{p-1}-(\log(|x|)-\log(\varepsilon)+
               \kappa \upxi_\varepsilon(y))^{p-1}}{|x-y|^{N+sp}}dy.
             \]
    	    Now we choose $\kappa$ large enough so that $h(x)\le 0$  
    	    for a.e. Lebesgue point $x\in A_{r,R}.$ Then, we can prove the second 
    	    case of our Liouville-type theorem.
            \begin{theorem}\label{theorem:Liouville2} 
	        Let $N\ge2,0<s<1,$ and $1<p<\infty.$ If $N=ps$ and $u$ is a 
	        non-negative lower semi-continuous weak solution of \eqref{eq:l13},
		then $u$ is constant.
            \end{theorem}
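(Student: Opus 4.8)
The plan is to follow the scheme of the proof of Theorem~\ref{theorem:Liouville1}, with the power fundamental solution $|x|^{\beta}$ replaced by the logarithmic one $\log|x|$ together with the perturbed function $\upvarrho_\varepsilon(x)=\upxi_\varepsilon(x)-\log|x|$ constructed above, which for $\kappa$ large is a weak solution of $(-\Delta_p)^s\upvarrho_\varepsilon=h\le 0$ in $A_{r,R}$ for every $R>r$. Fix $r>1$ and set $m(r):=\min\{u(x)\colon x\in\overline{B_r}\}$, a minimum attained because $u$ is lower semicontinuous on the compact set $\overline{B_r}$. Recalling that $\upvarrho_\varepsilon(x)=-\log|x|$ for $|x|\ge\varepsilon$, that $\upvarrho_\varepsilon=-\log\varepsilon+\kappa\zeta_\varepsilon$ on $B_\varepsilon$, and that $\upvarrho_\varepsilon+\log R$ attains its maximum $\log R+\kappa-\log\varepsilon$ on $B_{\varepsilon/2}$, I would introduce, for $R>r$ and $\varepsilon$ small, the barrier
\[
\Xi_R(x):=\frac{m(r)}{\log R+\kappa-\log\varepsilon}\,\bigl(\upvarrho_\varepsilon(x)+\log R\bigr),
\]
so that $\Xi_R\le m(r)$ on $\mathbb{R}^N$, $\Xi_R(x)=m(r)(\log R-\log|x|)/(\log R+\kappa-\log\varepsilon)$ for $|x|\ge\varepsilon$, and hence $\Xi_R\le 0$ on $\mathbb{R}^N\setminus B_R$. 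Since $(-\Delta_p)^s$ is insensitive to adding a constant and scales by the $(p-1)$-power under multiplication by a positive constant, $\Xi_R$ is a weak solution of $(-\Delta_p)^s\Xi_R\le 0$ in $A_{r,R}$, and it belongs to $L_s^{p-1}(\mathbb{R}^N)\cap W^{s,p}_{loc}$ (being Lipschitz on bounded sets and growing logarithmically at infinity).

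With the barrier at hand, the next step is comparison: on $B_r$ one has $\Xi_R\le m(r)\le u$, while on $\mathbb{R}^N\setminus B_R$ one has $\Xi_R\le 0\le u$; since $\Xi_R$ is a weak subsolution and $u$ a weak supersolution in $A_{r,R}$, the comparison principle (see, for instance, \cite{MR3631323}) yields $\Xi_R\le u$ on all of $\mathbb{R}^N$. Letting $R\to\infty$ with $\kappa$ held fixed gives $\Xi_R(x)\to m(r)$ for every $x$, whence $u\ge m(r)$ on $\mathbb{R}^N$; combined with $\inf_{\mathbb{R}^N}u\le m(r)$ this shows that $u$ attains its global minimum, at a point of $\overline{B_r}$. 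Exactly as in Theorem~\ref{theorem:Liouville1}, I would conclude by noting that, by \cite{MR4030247} (see also \cite{MR4333435}), $u$ is a viscosity supersolution of \eqref{eq:l13}; since $u$ attains its minimum, it follows that $u$ is constant.

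The step I expect to be the main obstacle is the one already carried out in the discussion preceding the statement: building $\upxi_\varepsilon$ and calibrating $\kappa$ so that $h(x)\le 0$ for a.e.\ $x\in A_{r,R}$. This ``logarithmic perturbation of the fundamental solution'' has no counterpart in the case $N\ne ps$ precisely because $\log|x|$ solves the homogeneous equation with zero right-hand side, so the strict sign that the power $|x|^{\beta}$ supplied (through $\mathcal{C}(\beta)<0$) must now be extracted from the perturbation near the origin. For the Liouville argument to close one further needs $\kappa$ to be chosen \emph{uniformly in $R$}, so that $\Xi_R\to m(r)$; this uniformity should follow from the observation that, for $x\in A_{r,R}$ and $y\in B_\varepsilon$ with $\varepsilon$ small, the kernel $|x-y|^{-N-sp}$ is comparable to $|x|^{-N-sp}$ uniformly in $|x|\ge r$, so that the sign of $h(x)$ is governed by $\int_{B_\varepsilon}\bigl[(c+\log(\varepsilon/|y|))^{p-1}-(c+\kappa\zeta_\varepsilon(y))^{p-1}\bigr]\,dy$ with $c=\log(|x|/\varepsilon)\ge\log(r/\varepsilon)$; evaluating $\int_{B_\varepsilon}(c+\log(\varepsilon/|y|))^{p-1}\,dy=\sigma_{N-1}\varepsilon^{N}\int_0^{\infty}(c+t)^{p-1}e^{-Nt}\,dt$ in polar coordinates and comparing it with the contribution $|B_{\varepsilon/2}|(c+\kappa)^{p-1}$ of the $\zeta_\varepsilon$-term shows that a $\kappa$ depending only on $N,p,r,\varepsilon$ suffices for all such $c$. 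Granting this, everything else is the routine barrier argument described above.
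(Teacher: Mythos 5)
Your proof follows exactly the same route as the paper's: you reuse the objects $\zeta_\varepsilon,\upxi_\varepsilon,\upvarrho_\varepsilon$ constructed just before the statement, build the barrier $\Xi_R=m(r)\bigl(\upvarrho_\varepsilon+\log R\bigr)/(\log R-\log\varepsilon+\kappa)$ — this is precisely the paper's $\uptheta_\varepsilon$ — verify it is a subsolution and lies below $u$ off $A_{r,R}$, apply the comparison principle, send $R\to\infty$, and conclude via attainment of the minimum and the weak/viscosity equivalence. Your write-up is in two respects more explicit than the paper's: you spell out the passage $R\to\infty$ (the paper jumps from $\uptheta_\varepsilon\le u$ directly to $m(r)\le u$ on $|x|\ge r$, which needs that limit), and you explain why $\kappa$ can be chosen independently of $R$, a point the paper simply asserts.

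One small imprecision in the heuristic for the choice of $\kappa$: you compare $\int_{B_\varepsilon}(c+\log(\varepsilon/|y|))^{p-1}\,dy$ directly with $|B_{\varepsilon/2}|(c+\kappa)^{p-1}$. As $c\to\infty$ the first behaves like $|B_\varepsilon|\,c^{p-1}$ while the second like $|B_{\varepsilon/2}|\,c^{p-1}$, and since $|B_{\varepsilon/2}|<|B_\varepsilon|$ the stated comparison would actually fail for large $c$. The comparison that works (and closes the argument) is between the \emph{excesses over the common bulk}: both $\int_{B_\varepsilon}(c+\log(\varepsilon/|y|))^{p-1}\,dy$ and $\int_{B_\varepsilon}(c+\kappa\zeta_\varepsilon(y))^{p-1}\,dy$ exceed $|B_\varepsilon|c^{p-1}$, the excess of the first is $O(c^{p-2})$ uniformly in $c\ge\log(r/\varepsilon)$, and the excess of the second is at least $|B_{\varepsilon/2}|\bigl[(c+\kappa)^{p-1}-c^{p-1}\bigr]\gtrsim\kappa\,c^{p-2}$; with the kernel ratio $\bigl((|x|+\varepsilon)/(|x|-\varepsilon)\bigr)^{N+sp}\le\bigl((r+\varepsilon)/(r-\varepsilon)\bigr)^{N+sp}$ absorbed into the constant, this yields a $\kappa=\kappa(N,p,r,\varepsilon)$ valid for all $c$. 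This is a minor fix to the heuristic, not a flaw in the method; the rest of the argument is correct.
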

	    \begin{proof}
                Let $u$ be a 
	        non-negative lower semi-continuous weak solution of \eqref{eq:l13}
	        and
	         \[
                    \uptheta_\varepsilon(x)=m(r)
                    \dfrac{\upvarrho_\varepsilon(x)+\log(R)}{-\log(\varepsilon)+
                    \kappa+\log(R)}    	    
    	        \]
    	        	where $m(r)\coloneqq \min\{u(x)\colon x\in \overline{B_r}(0)\}.$
    	        	Then $\uptheta_\varepsilon$ is a weak solution of \eqref{eq:l12}
    	        	and it is easy to see that $\uptheta_\varepsilon\le u$ 
    	        	in $\mathbb{R}^N\setminus A_{r,R}.$
		Thus, by the comparison principle, we have that
		$\uptheta_\varepsilon\le u$ in $\mathbb{R}^N.$ 
		Therefore $m(r)\le u(x)$ for any $|x|\ge r.$ Then
		there is $x_0\in \overline{B_r}(0)$ such that $u(x_0)\le u(x)$ for 
		any $x\in \mathbb{R}^N.$ 
			
		On the other hand, by \cite{MR4030247} (see also \cite{MR4333435}), 
		we know that $u$ is a viscosity solution of \eqref{eq:l13}. 
		Finally, since $u$ attains its minimum, we can conclude that $u$ is 
		constant.
            \end{proof}
\section{A nonlinear Liouville-type theorem}\label{st2}
    In this last section, we prove our non-linear Liouville-type theorem
    (see Theorem \ref{theorem:nonlinear}). As before, we split the proof in
    two cases.
    
    \subsection{The sub-critical case} First, we show the following result.
    	\begin{theorem}
	\label{theorem:nonlinear1}
		Let $N\ge2,0<s<1,$  $1<p<\infty,$ and $N>ps.$ 
		If $0<q< \tfrac{N(p-1)}{N-ps}$ and $u\in C(\mathbb{R}^N)$ is a non-negative weak solution of 
			\begin{equation}\label{eq:nl11}
				(-\Delta_p)^s u-u^q \ge 0\quad\text{in }\mathbb{R}^N.
			\end{equation}
			then $u\equiv0.$
		 
    \end{theorem}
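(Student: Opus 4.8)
The plan is a Serrin--Zou type argument, in the spirit of \cite{MR2739791}: the Hadamard lower bound will force $\int_{B_R}u^q\,dx$ to grow fast in $R$, while testing the equation against a cutoff will force it to grow slowly, and the two growth rates turn out to be incompatible precisely when $q<\tfrac{N(p-1)}{N-ps}$. Suppose, for contradiction, that $u\not\equiv0$. Since $(-\Delta_p)^s u\ge u^q\ge 0$, $u$ is a non-negative weak supersolution of the homogeneous equation, so by the strong minimum principle for $(-\Delta_p)^s$-supersolutions it is strictly positive on $\mathbb{R}^N$; in particular $m(r):=\min\{u(x)\colon x\in\overline{B_r}\}>0$ for every $r>0$.

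First I would extract quantitative decay information. Since $u$ is a non-negative lower semicontinuous weak solution of $(-\Delta_p)^s u\ge 0$ on $\mathbb{R}^N$ and $N>ps$, Lemma \ref{lemma:Had1} applies: for each $\beta\in\left(-\tfrac{N}{p-1},\tfrac{ps-N}{p-1}\right)$ and each $r_0>1$ there is $C>0$ with $m(r)\ge C\,m(r_0)\,r^{\beta}$ for $r>r_0$, hence $u(x)\ge C|x|^{\beta}$ for $|x|>r_0$. Integrating over $B_R\setminus B_{r_0}$ gives $\int_{B_R}u^q\,dx\ge c\int_{r_0}^R r^{\beta q+N-1}\,dr$. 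As $\beta\uparrow\tfrac{ps-N}{p-1}$ the exponent satisfies $\beta q+N\to a_\ast:=N-\tfrac{(N-ps)q}{p-1}$, and $a_\ast>0$ is equivalent to $q<\tfrac{N(p-1)}{N-ps}$; so, fixing $\beta$ close enough to the endpoint, one obtains some $a\in(0,a_\ast)$, as close to $a_\ast$ as we wish, with $\int_{B_R}u^q\,dx\ge c\,R^{a}$ for all large $R$.

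Next I would test the weak formulation of \eqref{eq:nl11} against a cutoff $\varphi_R$ with $\varphi_R\equiv 1$ on $B_R$, $\mathrm{supp}\,\varphi_R\subset B_{2R}$, of the form $\varphi_R(x)=\eta(x/R)^{\kappa}$ with $\eta\in C^\infty_c(B_2)$ fixed and $\kappa$ large. Then $\int_{B_R}u^q\,dx\le\int_{\mathbb{R}^N}u^q\varphi_R\le\langle(-\Delta_p)^s u,\varphi_R\rangle$, and the point is to estimate the bilinear form. Using $|u(x)-u(y)|^{p-1}\le C_p(u(x)^{p-1}+u(y)^{p-1})$, the symmetry of the kernel, the bound $\int_{\mathbb{R}^N}\tfrac{|\varphi_R(x)-\varphi_R(y)|}{|x-y|^{N+sp}}\,dy\le C R^{-sp}(1+|x|/R)^{-N-sp}$, the membership $u\in L_s^{p-1}(\mathbb{R}^N)$ to control the far tail of the nonlocal interaction, and H\"older's inequality with exponents $\tfrac{q}{p-1},\tfrac{q}{q-p+1}$ against the weight $(1+|x|/R)^{-N-sp}$ (of total mass $\sim R^N$), one derives --- after absorbing the nonlocal tail into the right-hand side --- a recursive inequality $\int_{B_R}u^q\,dx\le C R^{\,-sp+N(q-p+1)/q}\bigl(\int_{B_{CR}}u^q\,dx\bigr)^{(p-1)/q}$, valid when $q>p-1$; since then $(p-1)/q<1$, iterating it together with the finiteness and a priori polynomial growth of $R\mapsto\int_{B_R}u^q$ yields $\int_{B_R}u^q\,dx\le C R^{\,N-\frac{spq}{q-p+1}}$. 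For $0<q\le p-1$ the H\"older pairing degenerates and a complementary argument is needed --- for instance a finite bootstrap of the inequality $u(x)\ge c|x|^{\beta}$ through the equation, each step raising the exponent by a fixed amount, or a first-eigenvalue/narrow-domain obstruction --- again forcing $u\equiv0$.

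Finally, I would compare the two estimates: $c\,R^{a}\le C\,R^{\,N-spq/(q-p+1)}$ for all large $R$. An elementary computation gives $a_\ast>N-\tfrac{spq}{q-p+1}$ if and only if $(N-ps)q<N(p-1)$, i.e.\ if and only if $q<\tfrac{N(p-1)}{N-ps}$; choosing $\beta$ so that $a>N-\tfrac{spq}{q-p+1}$ then makes the inequality impossible as $R\to\infty$, and therefore $u\equiv0$. I expect the main difficulty to be in the testing step: estimating the nonlinear, nonlocal bilinear form $\langle(-\Delta_p)^s u,\varphi_R\rangle$ and, above all, absorbing the tail contributions coming from the nonlocality into the recursion, together with the separate treatment of the range $0<q\le p-1$ where the natural H\"older pairing is unavailable --- exactly the ``delicate estimates together with some new ideas due to the strongly nonlinear character of the operator'' anticipated in the introduction.
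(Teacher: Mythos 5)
Your plan is a genuinely different route from the paper's. You propose an integral (Serrin--Zou / Mitidieri--Pokhozhaev) scheme: combine the Hadamard lower bound on $\int_{B_R}u^q$ with an upper bound obtained by testing the weak formulation against a cutoff and iterating a recursive inequality. The paper instead proceeds \emph{pointwise}: it builds a compactly supported competitor $\upzeta(x)=m(R/2)\,\mu(|x|/R)$ with $(-\Delta_p)^s\upzeta\le C\,m(R/2)^{p-1}R^{-ps}$ in $B_R$, locates a touching point $x_R$ of $u-\upzeta$, and evaluates the viscosity inequality there to obtain $m(R)^q\le C\,m(R/2)^{p-1}R^{-ps}$; combined with the Hadamard lemmas this produces a clean decay estimate on $m(R)$ alone, with no global integral of $u^q$ appearing. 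This avoids exactly the difficulties your route runs into.

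There are genuine gaps in the integral route as you have sketched it. First, the proposed recursion $\int_{B_R}u^q\le CR^{\gamma}\bigl(\int_{B_{CR}}u^q\bigr)^{(p-1)/q}$ is only useful if you already know $\int_{B_R}u^q<\infty$ with at-most-polynomial growth; a weak supersolution in $L_s^{p-1}(\mathbb{R}^N)$ need not satisfy any global integrability of $u^q$, and you do not establish this. Second, the step from $|u(x)-u(y)|^{p-1}\le C(u(x)^{p-1}+u(y)^{p-1})$ to the recursion discards the sign structure of $\Psi_p(u(x)-u(y))(\varphi_R(x)-\varphi_R(y))$; in the local case one either tests against a negative power of $u$ or carries out a Caccioppoli-type absorption, and in the nonlocal $p$-Laplacian setting the analogous absorption of the long-range tail is not routine --- you flag it as the main difficulty, but it is precisely where a new idea is required and none is supplied. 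Third, the range $0<q\le p-1$, where H\"older degenerates, is handled only by a vague allusion to a bootstrap or eigenvalue obstruction; in the paper this range falls out immediately from the pointwise inequality $m(R)^{q-p+1}\le C R^{-ps}$ together with the monotonicity of $m$. Until the a priori growth of $\int_{B_R}u^q$ and the tail absorption are actually proved, and the subcase $q\le p-1$ is argued concretely, the proposal does not close.
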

    \begin{proof}
        Let $u$ be a non-negative lower semi-continuous weak solution of 
        \eqref{eq:nl11}. {By \cite{MR4333435}, $u$ is a viscosity solution 
        of \eqref{eq:nl11}.}
        
        We suppose by contradiction that $u\not\equiv 0$
        in $\mathbb{R}^N.$ By \cite[Theorem 1.2]{MR3631323}, we have that
        $u>0$ a.e. in $\mathbb{R}^N.$

        On the other hand, by \cite{MR4030247} (see also \cite{MR4333435}), 
	we know the $u$ is a viscosity solution of 
	\[
            (-\Delta_p)^s u(x)\ge 0\quad\text{ in }\mathbb{R}^N.	
	\]
	Therefore $u>0$ in $\mathbb{R}^N.$

        Let's take a function $\mu\in C^\infty([0,\infty),\mathbb{R})$
        such that $\mu$ is non-increasing, $0\le\mu\le1,$ and
        \[
            \mu(t)=
            \begin{cases}
               1 &\text{if } 0\le t\le \tfrac12,\\
               0  &\text{if } t\ge 1.\\    
            \end{cases}        
        \]  
        
        Then, by \cite[Proposition 2.12]{MR3593528}, there is a positive 
        constant $C$ such that $w(x)=\mu(|x|)$ satisfies in strongly sense
        \[
            (-\Delta_p)^s w(x)\le C \quad\text{ in } B_1(0).       
        \]
        
        Now, for any $R>1$, 
        we take 
        \[
            \upzeta(x)= m\left(\tfrac{R}2\right)\mu\left(\tfrac{|x|}R\right),
        \]       
        	where $m\left(\tfrac{R}2\right)= 
        	\min\left\{u(x)\colon x\in \overline{B_{\tfrac{R}2}}(0)\right\}.$
        	Observe that $\upzeta$ satisfies in strongly sense
        \begin{equation}
            \label{eq:nonlinear1}
            (-\Delta_p)^s \upzeta(x)\le m\left(\tfrac{R}2\right)^{p-1} 
            \frac{C}{R^{ps}} \quad\text{ in } B_R(0).            
        \end{equation}
        Here, $C$ is a positive constant independent of $R.$
        	
        	On the other hand, since $\upzeta(x)\le u(x)$ in $\mathbb{R}^N\setminus A_{\frac{R}2,R}$ and $u$ is lower semi-continuous function, there is $x_R\in B_R(0)$
        	such that $u(x_R)-\upzeta(x_R)\le u(x)-\upzeta(x)$ for any $x\in \mathbb{R}^N.$
        	
        	We divide the rest of the proof into two cases.
        	
        	\medskip

        \noindent	{\it Case 1:} $\tfrac{R}{2}<|x_R|<R.$ 
        	We take $r\ll\mathrm{dist}(x_R,\partial B_R(0)),$ and
        	\[
        	    \phi_r(x)=
        	        \begin{cases}
                    \upzeta(x)- \upzeta(x_R)+u(x_R) &\text{ if } x\in B_r(x_R),\\
                    u(x)  &\text{ if } x\in \mathbb{R}^N\setminus B_r(x_R).\\  
                \end{cases}
        	\]
        	Note that $\phi_r(x)\in C^\infty(B_r(x_R)),$ $\phi_r(x_R)=u(x_R),$ 
        	$\phi_r(x)\le u(x)$  in $B_r(x_R)$ and 
        	$\nabla\phi_r(x_R)=\nabla\upzeta_r(x_R)\neq 0.$ Then,
        	since $u$ is a viscosity solution of \eqref{eq:nl11}, we have that
        	\[
            u(x_R)^q\le(-\Delta_p)^s\phi_r(x_R).        	
        	\]
        	Now, using that $u(x_R)-\upzeta(x_R)\le u(x)-\upzeta(x)$ for any 
        	$x\in \mathbb{R}^N,$ $x_R\in B_R(0),$ and \eqref{eq:nonlinear1}, we get
        \[
            m(R)^q\le (-\Delta_p)^s\phi_r(x_R)\le(-\Delta_p)^s\upzeta_r(x_R)\le
            m\left(\tfrac{R}2\right)^{p-1} 
            \frac{C}{R^{ps}}.
        \]
        Then, by Lemma \ref{lemma:Had2}, we have
        \[
            m(R)^{q}\le \frac{C}{R^{ps}}m(R)^{p-1}\quad\forall R>1
        \]        	
       where $C$ is a positive constant independent of $R.$
       If $0<q\le p-1,$ we obtain a contradiction. On the other hand, if 
       $q>p-1,$ we have
       \[
            m(R)\le CR^\kappa       
       \]
       where $\kappa=-\tfrac{ps}{q-p+1}.$ Since $p-1<q<\tfrac{N(p-1)}{N-ps},$ there is
       $\beta\in (\tfrac{N}{p-1},\tfrac{ps-N}{p-1})$ such that $\kappa<\beta.$ Then,
       by Lemma \ref{eq:Had1_1}, there is $r_0>1$ and a positive constant such that
         \[
            m(R)\le CR^\kappa\le C R^{\kappa-\beta} m(R)\quad \forall R>r_0.      
       \]
       We again obtain a contradiction.
       
       \medskip

        \noindent{\it Case 2:} $|x_R|\le \tfrac{R}{2}.$ 
        Then
        \[
            0\le u(x_R)-m(\tfrac{R}2)=u(x_R)-\upzeta(x_R)\le u(x)-\upzeta(x)
            \quad\forall x\in\mathbb{R}^N.        
        \]
        In particular, if we $\tilde{x}\in B_{\frac{R}2}(0)$ so that 
        $u(\tilde{x})=m(\tfrac{R}2)$ we have that 
        \[
            0\le u(x_R)-m(\tfrac{R}2)\le 0.
        \]
        Therefore $u(x_R)=m(\tfrac{R}2)$ and $\upzeta(x)\le u(x)$ for any 
        $x\in\mathbb{R}^N.$
        
        Thus if $p>\tfrac{2}{2-s},$ we can proceed as in the previous case.
        But if $1<p\le\tfrac2{2-s},$ we have a problem because $x_R$ is a 
        	critical point of $\upzeta$ but it is not 
        	isolated. 
        	Then $\phi_r$ is not an admissible test function.
        	To solve this problem, we take
        	\[
        	    \tilde{\phi}_r (x)=
             \begin{cases}
                	   \upzeta(x)- m\left(\tfrac{R}2\right) |x-x_R|^\gamma &\text{if } x\in B_{r}(x_R),\\
                    u(x) &\text{if } x\in B_{r}(x_R),\\
                \end{cases}      	            	     
        	\]
         as a test function with
         \[
              \gamma  >\tfrac{sp}{p-1} \quad \text{ and }\quad
              r<R^{-\tfrac{sp}{\gamma(p-1)-sp}}.
         \]  
         
         Observe that
         \begin{align*} 
            m(R)^q\le& u(x_R)^q\le (-\Delta_p)^s\tilde{\phi}_r(x_R)\\
                \le& \int_{B_r(x_R) }\dfrac{|\upzeta(x_R)-\upzeta(x)+
                 m\left(\tfrac{R}2\right)|x-x_R|^\gamma|^{p-1}}
                {|x-x_R|^{N+sp}}dx\\
                &+ \quad \int_{\mathbb{R}^N\setminus B_r(x_R) }
                \dfrac{|u(x_R)-u(x)|^{p-2}(u(x_R)-u(x))}
                {|x-x_R|^{N+sp}}dx.      
        \end{align*} 
        Note that, $0<p-1\le\tfrac{s}{2-s}<1$ and $\upzeta(x_R)-\upzeta(x)\ge 0$ for
        any $x\in\mathbb{R}^N.$ Then, using that 
        \[
            (a+b)^{q}\le a^{q}+b^q \quad\forall \, a,b\ge0 \quad q\in(0,1]        
        \]
        (see \cite{MR3593528}), $\upzeta(x)\le u(x)$ for any 
        $x\in\mathbb{R}^N,$ and \eqref{eq:nonlinear1}, we have that
         \begin{align*} 
            m(R)^q\le& u(x_R)^q\le (-\Delta_p)^s\tilde{\phi}_r(x_R)\\
                \le& m\left(\tfrac{R}2\right)^{p-1} 
                 \int_{B_r(x_R) }\dfrac{|x-x_R|^{\gamma(p-1)}}
                {|x-x_R|^{N+sp}}dx+ (-\Delta_p)^s\upzeta(x_R)\\
                \le& C\left(r^{\gamma(p-1)-sp}+ \tfrac{1}{R^{ps}}\right)
                 m\left(\tfrac{R}2\right)^{p-1}\\
                 \le &\frac{C}{R^{ps}} m\left(\tfrac{R}2\right)^{p-1}\quad\forall R>1,
        \end{align*} 
        where $C$ is a positive constant independent of $R.$
        
        Now the proof follows exactly the proof of Case 1.            	
    \end{proof}

    \subsection{The super-critical} To conclude this article, we prove the following
    result.
    	\begin{theorem}
	\label{theorem:nonlinear3}
		Let $N\ge2,0<s<1,$ $1<p<\infty,$ and $N>ps.$ 
		If $q>\tfrac{N(p-1)}{N-ps}$ then there is a positive solution of \eqref{eq:nl11}.
	\end{theorem}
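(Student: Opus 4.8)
The plan is to exhibit an explicit positive supersolution of \eqref{eq:nl11} on all of $\mathbb{R}^N$, built from the fundamental power $v_\beta=|x|^\beta$ of Theorem \ref{Theorem:Fundamental} with the scaling-critical exponent and then truncated near the origin so as to be bounded and continuous. Concretely, set $\beta:=-\tfrac{sp}{q-p+1}$, so $\beta(q-p+1)=-sp$. First I would record the arithmetic this choice encodes. Since $N>ps$ one has $\tfrac{N(p-1)}{N-ps}>p-1$, so the hypothesis $q>\tfrac{N(p-1)}{N-ps}$ forces $q>p-1$, hence $\beta<0$; moreover a direct manipulation shows that $q>\tfrac{N(p-1)}{N-ps}$ is \emph{equivalent} to $\tfrac{ps-N}{p-1}<\beta<0$. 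By \eqref{signodecbeta} (recall $\tfrac{ps-N}{p-1}<0$ in this regime) this gives $\mathcal{C}(\beta)>0$, while $\tfrac{ps-N}{p-1}>-\tfrac{N}{p-1}$ and $\beta<0<\tfrac{ps}{p-1}$ place $\beta$ inside the admissible range $\bigl(-\tfrac{N}{p-1},\tfrac{ps}{p-1}\bigr)$, so Theorem \ref{Theorem:Fundamental} applies to $v_\beta$. The two identities the choice of $\beta$ encodes, both immediate from $\beta(q-p+1)=-sp$, are $\beta(p-1)-sp=\beta q$ and $\rho^{-sp}=(\rho^\beta)^{q-p+1}$.

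Since $(-\Delta_p)^s$ is positively $(p-1)$-homogeneous (as $\Psi_p(c\,t)=c^{p-1}\Psi_p(t)$ for $c>0$), Theorem \ref{Theorem:Fundamental} gives, for the dilated function $u_c:=c\,v_\beta$ with $c>0$,
\[
(-\Delta_p)^s u_c(x)=c^{p-1}\mathcal{C}(\beta)\,|x|^{\beta q},\qquad u_c(x)^q=c^{q}\,|x|^{\beta q}\qquad (x\neq 0),
\]
so $u_c$ solves $(-\Delta_p)^su\ge u^q$ in $\mathbb{R}^N\setminus\{0\}$ provided $c^{\,q-p+1}\le\mathcal{C}(\beta)$. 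Its only defect is the singularity at the origin, which I would remove by passing to the truncation
\[
\widehat u(x):=c\,\min\{|x|^\beta,\ \rho^\beta\}\qquad(\rho\in(0,1)\ \text{small, to be fixed later}),
\]
a bounded, positive, Lipschitz, continuous function on $\mathbb{R}^N$ that equals the constant $c\rho^\beta$ on $B_\rho$, equals $u_c$ on $\mathbb{R}^N\setminus B_\rho$, and decays like $c|x|^\beta$ at infinity, hence belongs to $L^{p-1}_s(\mathbb{R}^N)\cap W^{s,p}_{\mathrm{loc}}(\mathbb{R}^N)$; note also $\widehat u\le u_c$ everywhere.

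Then I would verify the pointwise inequality $(-\Delta_p)^s\widehat u(x)\ge\widehat u(x)^q$ for $x\neq 0$ in two regimes. If $|x|>\rho$, then $\widehat u$ coincides with $u_c$ near $x$ while $\widehat u\le u_c$ globally, so monotonicity of $\Psi_p$ yields $\Psi_p(\widehat u(x)-\widehat u(y))\ge\Psi_p(u_c(x)-u_c(y))$ for every $y$, whence $(-\Delta_p)^s\widehat u(x)\ge(-\Delta_p)^su_c(x)=c^{p-1}\mathcal{C}(\beta)|x|^{\beta q}\ge c^q|x|^{\beta q}=\widehat u(x)^q$ as soon as $c^{\,q-p+1}\le\mathcal{C}(\beta)$. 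If $|x|<\rho$, then $\widehat u(x)=c\rho^\beta$ is the global maximum of $\widehat u$, so every contribution to $(-\Delta_p)^s\widehat u(x)$ is non-negative, and bounding the defining integral below over the shell $\{2\rho<|y|<1\}$ alone --- where $\widehat u(y)=c|y|^\beta\le 2^\beta c\rho^\beta$ and $|x-y|\le\tfrac32|y|$ --- gives
\[
(-\Delta_p)^s\widehat u(x)\ \ge\ C_0\,c^{p-1}\,\rho^{\beta(p-1)-sp}=C_0\,c^{p-1}(\rho^\beta)^{q},
\]
with $C_0=C_0(N,s,p,\beta)>0$ \emph{independent of $\rho$ and $c$} (the shell integral contributes the factor $\rho^{-sp}$ and $\beta(p-1)-sp=\beta q$), so $(-\Delta_p)^s\widehat u(x)\ge(c\rho^\beta)^q=\widehat u(x)^q$ on $B_\rho$ whenever $c^{\,q-p+1}\le C_0$.

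To conclude, fix $\rho\in(0,1)$ small enough for the shell bound and then choose $c>0$ with $c^{\,q-p+1}\le\min\{\mathcal{C}(\beta),C_0\}$; with these choices $\widehat u$ is positive and bounded and satisfies $(-\Delta_p)^s\widehat u(x)\ge\widehat u(x)^q$ at every $x\neq 0$. Since $\widehat u$ is Lipschitz, testing this inequality against non-negative $\varphi\in C_c^\infty(\mathbb{R}^N)$ and using the usual symmetrization of the bilinear form turns it into the weak supersolution inequality \eqref{eq:nl11} on $\mathbb{R}^N$, and by \cite{MR4030247} (see also \cite{MR4333435}) $\widehat u$ is then also a viscosity supersolution. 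The step I expect to be the main obstacle is the lower bound for $(-\Delta_p)^s\widehat u$ inside $B_\rho$: one must check that the constant $C_0$ obtained there is genuinely independent of the truncation radius $\rho$, which is precisely what makes the balance $\rho^{-sp}=(\rho^\beta)^{q-p+1}$ close the argument, and this is why the construction succeeds exactly in the supercritical range $q>\tfrac{N(p-1)}{N-ps}$.
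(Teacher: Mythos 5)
Your proposal is correct, and it reaches the same destination as the paper but by a genuinely different route. The paper does not truncate the singular power at all: it takes $\upkappa=\tfrac{sp}{q-p+1}$ and works directly with the smooth, globally bounded function $w(x)=(1+|x|)^{-\upkappa}$. After an affine change of variables the key step is the elementary triangle inequality $(1+|x|)|z|\le 1+|(1+|x|)z-e_1|$, which compares the rescaled integrand for $w$ pointwise to that of $v_{-\upkappa}(z)=|z|^{-\upkappa}$ and, using that $\Psi_p$ is increasing, yields $(-\Delta_p)^sw(x)\ge\mathcal{C}(-\upkappa)\,(1+|x|)^{-\upkappa q}=\mathcal{C}(-\upkappa)\,w(x)^q$ in one stroke, for all $x\neq 0$. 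Then $\mathcal{C}(-\upkappa)^{1/(q-p+1)}w$ is the desired supersolution. Your truncation $\widehat u=c\min\{|x|^\beta,\rho^\beta\}$ with $\beta=-\upkappa$ achieves the same thing, but it forces a two-region analysis: the monotonicity comparison $\widehat u\le u_c$ for $|x|>\rho$ is clean, while the interior estimate for $|x|<\rho$ requires the shell bound and a check that the constant is $\rho$-independent (which, as you correctly note, is where the scaling identity $\beta(p-1)-sp=\beta q$ closes the loop). Both approaches lean on \eqref{signodecbeta} for the positivity of $\mathcal{C}(\beta)$ in the range $\tfrac{ps-N}{p-1}<\beta<0$, and your observation that $q>\tfrac{N(p-1)}{N-ps}$ is equivalent to $\beta$ lying in this range is exactly the arithmetic the paper uses (in the form $0<\upkappa<\tfrac{N-ps}{p-1}$). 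What the paper's choice of $(1+|x|)^{-\upkappa}$ buys is simplicity: no truncation parameter to manage, no kink, no region splitting, and a supersolution that is $C^\infty$ on all of $\mathbb{R}^N$, so no discussion of principal values at the interface is needed. What your construction buys is transparency about why the supercritical range is exactly the right one, and it would adapt more directly to situations where a translated profile like $(1+|x|)^{-\upkappa}$ is not available. One small technical point worth spelling out in your write-up: on the sphere $|x|=\rho$ the truncated function has a Lipschitz kink and the pointwise operator may not be classically defined there; this is a null set and does not affect the passage to the weak inequality you invoke at the end, but it should be acknowledged.
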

	
        \begin{proof}
            In this case, we take $\upkappa=\tfrac{sp}{q-p+1}$ and
            \[
                w(x)=\dfrac{1}{(1+|x|)^{\upkappa}}.            
            \]
             Observe that, since $q>\tfrac{N(p-1)}{N-ps}$ we have that 
            $0<\upkappa<\tfrac{N-ps}{p-1}.$ 
            
            For any $x\in\mathbb{R}^N\setminus\{0\},$
            we have that
            \begin{align*}
               2 \int_{\mathbb{R}^N}&
                \dfrac{\Psi_p(w(x)-w(y))}{|x-y|^{N+sp}} dy=2\int_{\mathbb{R}^N}
                \dfrac{\Psi_p
                \left( \tfrac{1}{(1+|x|)^{\upkappa}}-
                \tfrac{1}{(1+|y|)^{\upkappa}}\right)}{|x-y|^{N+sp}}
                dy\\
                &=2\dfrac{1}{(1+|x|^2)^{\upkappa(p-1)+sp+N}}
                \int_{\mathbb{R}^N}
                \dfrac{
                    \Psi_p
                \left(1-
                        \left(
                        \tfrac{1+|x|}{1+|y|}
                        \right)^{\upkappa} \right)
                }{\left|\tfrac{x}{1+|x|}-\tfrac{y}{1+|x|}\right|^{N+sp}}
                dy.
            \end{align*}
            Applying a rotation, we may assume that 
            $\tfrac{x}{|x|}=e_1=(1,0,\dots,0)\in\mathbb{R}^N,$ and via
            the change of variable 
            $z=\tfrac{1}{1+|x|}\left(y+\tfrac{x}{|x|}\right),$ and since
            \[
                \frac{x}{1+|x|}-\frac{y}{1+|x|}=
                \frac{x}{|x|}-\frac{x}{(1+|x|)|x|} -\frac{y}{1+|x|}=  
                \frac{x}{|x|}-\frac{1}{1+|x|}\left(y+\frac{x}{|x|}\right),           
            \]
            and $\upkappa(p-1)+sp=\upkappa q,$ we get
            \[2(1+|x|)^{\upkappa q}\int_{\mathbb{R}^N}
                \dfrac{\Psi_p(w(x)-w(y))}{|x-y|^{N+sp}} dy=
                2\int_{\mathbb{R}^N}
                \dfrac{
                    \Psi_p
                \left(1-
                        \left(
                        \frac{1+|x|}{1+|(1+|x|)z-e_1|}
                        \right)^{\upkappa} \right)
                }{|e_1-z|^{N+sp}}
                dz.
            \]
            Now, using that 
            \[
                (1+|x|)|z|\le 1+|(1+|x|)z-e_1|
            \]
            we have
            \[
               \int_{\mathbb{R}^N}
                \dfrac{
                   \Psi_p
                \left(1-
                        \left(
                        \tfrac{1+|x|}{1+|(1+|x|)z-e_1|}
                        \right)^{\upkappa} \right)
                }{|e_1-z|^{N+sp}}
                dz\ge\\
                \int_{\mathbb{R}^N}
                \dfrac{
                    \Psi_p
                \left(1-
                        \tfrac{1}{|z|^{\upkappa}}\right)
                }{|e_1-z|^{N+sp}}
                dz=\mathcal{C}(-\upkappa).
            \]
            Since, $0>-\upkappa>\tfrac{ps-N}{p-1},$ by \eqref{signodecbeta},
               we have that $\mathcal{C}(-\upkappa)>0.$ See also Remark \ref{remark.visco.weak}. Then,
               \[
                    \mathcal{C}(-\upkappa)^{\frac1{q-p+1}}w ,              
               \]
                 is a positive solution of \eqref{eq:nl11}.
        \end{proof}


\section*{Acknowledgments}
	L.M.D.P. was partially supported by CONICET grant PIP GI No. 11220150100036CO (Argentina), 
	Agencia grant PICT-2018-03183 (Argentina), and UBACyT grant 20020160100155BA (Argentina).
	
	A.Q. was partially supported by Fondecyt grant No. 1231585 (Chile). 
	
	Both L.M.D.P. and A.Q. were also partially supported by the Math AmSud program under project 23-MATH-08-MiLNE.
	
\def\cprime{$'$}

\end{document}